\newtheorem{thm}{Theorem}[section]
\newtheorem{lem}[thm]{Lemma}
\newtheorem{coro}[thm]{Corollary} 
\newtheorem{conj}[thm]{Conjecture}
\theoremstyle{definition}
\newtheorem{defi}[thm]{Definition}
\theoremstyle{remark} 
\newtheorem{rem}[thm]{Remark}
\theoremstyle{definition}
\newtheorem{Fact}{Fact}
\newcounter{constant}
\newcommand{\newconstant}[1]{\refstepcounter{constant}\label{#1}}
\newcommand{\useconstant}[1]{c_{\ref{#1}}}
\newcommand{\defconstant}[1]{ \newconstant{c_{#1}}\expandafter\newcommand\csname c#1\endcsname{\useconstant{c_{#1}}} }  
\newcommand{\mZ}{{\mathcal Z}}
\newcommand{\Q}{\overline{\mathbb Q}}
\renewcommand{\r}{\boldsymbol r}
\newcommand{\C}{\mathbb{C}}
\newcommand{\Z}{\mathbb{Z}}
\newcommand{\N}{\mathbb{N}}
\newcommand{\R}{\mathcal R}
\newcommand{\M}{\mathcal M}
\newcommand{\K}{\mathcal K}
\newcommand{\A}{\mathcal A}
\newcommand{\I}{\mathcal I}
\newcommand{\J}{\mathcal J}
\newcommand{\E}{\mathcal E}
\newcommand{\be}{{\boldsymbol{e}}}
\newcommand{\x}{{\boldsymbol{x}}}
\newcommand{\z}{{\boldsymbol{z}}}
\newcommand{\lambd}{{\boldsymbol{\lambda}}}
\newcommand{\f}{{\boldsymbol f}}
\renewcommand{\S}{\mathcal{S}}
\renewcommand{\k}{{\boldsymbol{k}}}
\newcommand{\balpha}{{\boldsymbol{\alpha}}}
\newcommand{\X}{\boldsymbol{X}}
\newcommand{\Y}{\boldsymbol{Y}}
\newcommand{\bmu}{{\boldsymbol{\mu}}}
\newcommand{\bnu}{{\boldsymbol{\nu}}}
\newcommand{\bxi}{{\boldsymbol{\xi}}}
\newcommand{\bphi}{{\boldsymbol{\phi}}}
\newcommand{\gamm}{{\boldsymbol{\gamma}}}
\newcommand{\thet}{{\boldsymbol{\theta}}}
\newcommand{\kapp}{{\boldsymbol{\kappa}}}
\newcommand{\btau}{{\boldsymbol{\tau}}}
\renewcommand{\a}{{\boldsymbol{a}}}
\newcommand{\bR}{{\boldsymbol{R}}}
\newcommand{\bTheta}{{\boldsymbol{\Theta}}}
\numberwithin{equation}{section} 
\title[Mahler's method in several variables ]{Mahler's method in several variables and finite automata}
\author{Boris Adamczewski}
\address{
Univ Lyon, Universit\'e Claude Bernard Lyon 1\\
 CNRS UMR 5208, Institut Camille Jordan \\
 F-69622 Villeurbanne Cedex, France}
\email{Boris.Adamczewski@math.cnrs.fr}
\author{Colin Faverjon}
\address{
Univ Lyon, Universit\'e Claude Bernard Lyon 1\\
 CNRS UMR 5208, Institut Camille Jordan \\
 F-69622 Villeurbanne Cedex, France}
\email{colin.faverjon@riseup.net}
\date{}
\thanks{This project has received funding from the European Research Council (ERC) under the 
European Union's Horizon 2020 research and innovation programme 
under the Grant Agreement No 648132. }
\begin{document}

\begin{abstract}  
We develop a theory of linear Mahler systems in several variables from the 
perspective of transcendence 
and algebraic independence, which also includes the possibility of 
dealing with several  systems associated with sufficiently independent matrix transformations.  
Our main results go far beyond the existing literature, also surpassing those of two unpublished preprints 
the authors made available 
on the arXiv in 2018. 
 The main new feature is that they 
apply now without any restriction on the matrices defining the corresponding Mahler systems.  
As a consequence, we settle several  problems concerning  
expansions of numbers in multiplicatively independent bases.    
For instance, we prove that no irrational real number can be automatic in two multiplicatively independent 
integer bases, and we give a new proof and a broad algebraic generalization of Cobham's theorem in automata theory.  
We also provide a new proof and a multivariate generalization of Nishioka's theorem, a landmark result in Mahler's method. 
 \end{abstract}

\bibliographystyle{abbvr}
\maketitle
\setcounter{tocdepth}{1}
\tableofcontents

\section{Introduction }\label{sec: introduction}

It is commonly expected that expansions of numbers in multiplicatively independent bases, 
such as $2$ and $10$, should have no common structure. 
However, it seems extraordinarily difficult to confirm this naive heuristic principle 
in some way or another. In the late 1960s, Furstenberg \cite{Fu67,Fu70} 
suggested a series of conjectures, which became famous, and aim to capture  
this heuristic (see  Conjecture \ref{conj: F} in Appendix \ref{preamble}). 
Despite recent remarkable progress by Shmerkin \cite{Sh19} and Wu \cite{Wu19}, Conjecture \ref{conj: F}   
remains totally out of reach of the current methods. 
 As always when mathematicians have to face such an enormous gap between 
heuristic and knowledge, it becomes essential to find out \emph{good problems}. 
By that, we mean problems which, 
on the one hand, formalize and express the general heuristic, and, on the other hand, whose solution 
does not seem desperately  out of reach. 
While Furstenberg's conjectures take place in a dynamical setting,  
we use instead the language of automata theory to formulate some related conjectures 
that, hopefully, belong to the above category.  These conjectures are introduced and discussed in 
Appendix \ref{preamble}. 
Thanks to the work of Cobham \cite{Co68},  
various problems involving 
numbers generated by finite automata can be translated and extended to problems 
concerning transcendence and algebraic  
independence of values of $M$-functions. 
Furthermore,  
such problems 
fall into the scope of  \emph{Mahler's method}.   
In the end, we are able to settle our conjectures after proving the apparently unrelated 
Theorem \ref{thm: main}.

Let $\Q\subset \mathbb C$ denote the field of algebraic numbers and, 
given a field $\mathbb K\subset \mathbb C$, let $\mathbb K\{z\}$ denote 
the ring of convergent power series with coefficients in $\mathbb K$. 
 Given an integer $q\geq 2$,  $f(z)\in \overline{\mathbb Q}\{z\}$ is 
said to be a  {\it $q$-Mahler function}  if there exist polynomials 
$p_0(z),\ldots , p_m(z)\in \overline{\mathbb Q}[z]$, 
not all zero,  such that  
\begin{equation} \label{eq:Mahler1}
 p_0(z)f(z)+p_1(z)f(z^q)+\cdots + p_m(z)f(z^{q^m}) \ = \ 0. 
 \end{equation}  
 If $f(z)$ is $q$-Mahler for some $q$, we simply say that $f(z)$ is a Mahler function, or an $M$-function.  
The coefficients of an $M$-function generate only a finite field extension of $\mathbb Q$. 
Let us also recall that nonzero complex numbers 
$x_1,\ldots,x_r$ are multiplicatively  
 independent if there is no nonzero tuple of integers $(n_1,\ldots,n_r)$ such that 
$x_1^{n_1}\cdots x_r^{n_r}=1$.

\begin{thm}\label{thm: main}
Let $r\geq 1$ be an integer and $\mathbb K\subseteq \Q$ be a field. 
For every integer $i$, $1\leq  i\leq r$, we let $q_i\geq 2$ be an integer, 
$f_i(z)\in \mathbb K\{z\}$ be a $q_i$-Mahler function, 
and $\alpha_i\in \mathbb K$, $0 <\vert\alpha_i\vert <1$, be such that $f_i(z)$ 
is well-defined at $\alpha_i$.  Let us assume that one of the two following 
properties holds. 

\begin{itemize}

\item[{\rm (i)}] The numbers $\alpha_1,\ldots, \alpha_r$ are multiplicatively independent. 
 
\smallskip

\item[{\rm (ii)}] The numbers  $q_1,\ldots, q_r$ are pairwise multiplicatively independent. 
\end{itemize}
Then  $f_1(\alpha_1),f_2(\alpha_2),\ldots,f_r(\alpha_r)$ are algebraically independent over $\Q$, unless  
one of them belongs to $\mathbb K$.   

\end{thm}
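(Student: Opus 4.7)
The plan is to linearize each $f_i$ into a $q_i$-Mahler system, amalgamate the resulting $r$ one-variable systems into a single multivariate Mahler system, and then invoke the main algebraic-independence theorem for such systems developed earlier in the paper. For the linearization, the classical procedure gives, for each $i$, a positive integer $n_i$, a vector $\f_i(z) = (f_{i,1}(z),\ldots,f_{i,n_i}(z))^T$ with $f_{i,1}=f_i$ and entries analytic near $\alpha_i$, together with a matrix $A_i(z)\in\Gl_{n_i}(\Q(z))$ such that $\f_i(z) = A_i(z)\,\f_i(z^{q_i})$. Stacking these blocks, the vector
\[
\F(z_1,\ldots,z_r) := \bigl(\f_1(z_1)^T,\ldots,\f_r(z_r)^T\bigr)^T
\]
satisfies a single multivariate Mahler system
\[
\F(\z) = A(\z)\,\F(T\z),
\]
where $A(\z) = \mathrm{diag}\bigl(A_1(z_1),\ldots,A_r(z_r)\bigr)$ and $T\colon(z_1,\ldots,z_r)\mapsto(z_1^{q_1},\ldots,z_r^{q_r})$, both regular at the point $\balpha := (\alpha_1,\ldots,\alpha_r)\in\mathbb{K}^r$.

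The decisive step is then to apply the paper's multivariate Mahler theorem to the triple $(\F, A, T)$ at $\balpha$. Such a theorem should assert that, under an appropriate regularity condition on $(A,T,\balpha)$, every algebraic relation over $\Q$ among the coordinates of $\F(\balpha)$ is the specialization at $\balpha$ of an algebraic relation over $\Q(\z)$ among the coordinates of $\F(\z)$. The two alternative hypotheses of the theorem correspond to two genuinely distinct ways of verifying this regularity: in case (i), multiplicative independence of the $\alpha_i$ forces the orbit $\{T^n\balpha\}_{n\geq 0}$ to be eventually regular with respect to the singular locus of $A$ and Zariski-dense enough in $\mathbb{G}_m^r$ to transfer functional relations intact; in case (ii), it is multiplicative independence of the $q_i$ itself which, via a Kubota--Loxton--van der Poorten style argument, provides the required dynamical richness, even when the $\alpha_i$ are permitted to satisfy multiplicative relations.

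Once this descent from values to functions is available, one concludes by separation of variables. Suppose $f_1(\alpha_1),\ldots,f_r(\alpha_r)$ are algebraically dependent over $\Q$; by the descent principle, $f_1(z_1),\ldots,f_r(z_r)$ are algebraically dependent over $\Q(\z)$. Since these functions involve pairwise disjoint variables, a routine specialization argument --- comparing coefficients of the monomials $z_2^{\nu_2}\cdots z_r^{\nu_r}$ in a witnessing polynomial --- forces some $f_i(z_i)$ to be algebraic over $\Q(z_i)$. The classical rationality theorem for algebraic Mahler functions (due to Nishioka) then yields that $f_i$ lies in $\Q(z_i)\cap\mathbb{K}\{z_i\} = \mathbb{K}(z_i)$, and hence $f_i(\alpha_i)\in\mathbb{K}$, recovering the exception in the statement.

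The main obstacle, in my view, is the descent step under hypothesis (ii). In case (i), the required regularity of the orbit follows rather directly from the multiplicative independence of the $\alpha_i$, broadly along the lines of the classical Kubota--Nishioka multivariate framework. Case (ii) is genuinely new: it asks us to handle transformations with incommensurable exponents acting on evaluation points that may themselves be multiplicatively dependent. Controlling the arithmetic interaction between the distinct $q_i$ in order to rule out spurious algebraic relations along the combined orbit --- and thereby ensuring that the multivariate descent principle applies --- is where the principal technical novelty of the paper must lie.
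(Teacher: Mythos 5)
Your overall architecture --- linearize, pass to a multivariate system, descend from values to functions, then separate variables --- is the right family of ideas, but the central step as you set it up cannot be carried out. You amalgamate the $r$ one-variable systems into a single system with transformation $T\colon(z_1,\ldots,z_r)\mapsto(z_1^{q_1},\ldots,z_r^{q_r})$ and then invoke ``the paper's multivariate Mahler theorem'' for that single $T$. But when the $q_i$ are not all multiplicatively dependent (which is exactly case (ii), and is also possible in case (i)), the pair $(T,\balpha)$ is \emph{never} admissible: the matrix $\mathrm{diag}(q_1,\ldots,q_r)$ has no positive eigenvector for its spectral radius $\max_i q_i$, and concretely the coordinates of $T^k\balpha$ shrink like $|\alpha_i|^{q_i^k}$ at incommensurable doubly-exponential rates, so Condition (b) of admissibility fails and no lifting theorem for a single transformation applies. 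This is not a technicality to be checked but the principal obstruction: the paper's solution is a separate statement (Theorem \ref{thm: families}, feeding Theorem \ref{thm: purity}) in which each block is iterated a \emph{different} number of times $k_{l,i}\approx l/\log\rho(T_i)$ so that the blocks contract at comparable rates, together with a new vanishing theorem (via Corvaja--Zannier and piecewise syndetic sets) to control the resulting non-uniform orbits. Your proposal correctly identifies case (ii) as the hard case but offers no mechanism for it; and even in case (i) the paper must first partition the indices into multiplicative-dependence classes of the $q_i$, amalgamate only within each class, and then still invoke the independent-transformations theorem across classes.

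Two further steps need repair. First, regularity of $\alpha_i$ with respect to the linearized system is not free: the hypothesis is only that $f_i$ is defined at $\alpha_i$, and $\alpha_i$ may hit the singular locus of $A_i$ along its orbit. The paper resolves this (Lemma \ref{lem: adapt}) by replacing $f_i$ with a function $g_i$ built from a row of an iterated matrix $A_{i,l}$ evaluated at $\alpha_i$, using \cite[Theorem 1.10]{AF1} to guarantee that $\alpha_i$ becomes regular; this step also uses \cite[Corollaire 1.8]{AF1} to upgrade ``$f_i(\alpha_i)\notin\mathbb K$'' to transcendence. Second, your separation-of-variables conclusion overreaches: the lifted functional relation involves \emph{all} coordinates $f_{i,j}$ of each system, so disjointness of variables only yields that some full family $(f_{i,1},\ldots,f_{i,m_i})$ is algebraically dependent over $\Q(z_i)$ --- which happens routinely even when $f_i=f_{i,1}$ is transcendental. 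The correct bookkeeping is through transcendence degrees (Corollary \ref{cor:2} together with Lemmas \ref{lem:independantvariables} and \ref{lem:transcendancedegree}) or, as the paper does, through the purity formulation in terms of the ideals ${\rm Alg}_{\Q}(\mathcal E_i\mid\mathcal E)$ applied to the subtuple $(g_1(\alpha_1),\ldots,g_r(\alpha_r))$.
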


Until now, Theorem \ref{thm: main} was only proved when $r=1$.  
This special case, conjectured by Cobham \cite{Co68} 
in 1968 and settled by the authors \cite{AF1}  in 2017, 
implies that the decimal expansion of algebraic 
irrational numbers cannot be generated by  finite automata\footnote{This result was first proved 
by Bugeaud and the first author \cite{AB07} by means of the subspace theorem.}.  
Also, an algorithm to determine whether 
the numbers $f_i(\alpha_i)$  belong to $\mathbb K$ 
or not is described in \cite{AF2}.

Let us point out 
the four main difficulties we have to face when trying to prove Theorem \ref{thm: main}.

\begin{itemize}

\item[(i)] We have to consider a bunch of \emph{arbitrary $M$-functions}. 
In contrast, many results in the past 
where restricted to inhomogeneous order one equations (see Section \ref{sec: mahlersv}). 
That is, equations of the form $p_{-1}(z)+p_0(z)f(z)+p_1(z)f(z^q)=0$. Being able to deal 
with arbitrary equations becomes essential for applications involving automata. 

\item[(ii)] Given an $M$-function, we have to consider its values at \emph{arbitrary algebraic points} 
where it is well-defined, while a classical feature of results in this framework is that they are 
only available for points which 
are \emph{regular}\footnote{See Definition \ref{def:reg}.} with respect to the underlying Mahler system.  

\item[(iii)]  We have to consider simultaneously values of $M$-functions at 
\emph{different algebraic points}. In the setting of Siegel $E$-functions, the study of algebraic relations 
between values of  $E$-functions at different algebraic points can 
be achieved by considering different $E$-functions at the same point. 
Indeed, if $f(z)$ is an $E$-function and $\alpha$ is an algebraic number, then 
the function $f(\alpha z)$ is still an $E$-function.  However, this trick no longer works for 
$M$-functions. 

\item[(iv)] We have to consider $M$-functions associated with 
\emph{different transformations} ({\it i.e.}, $z\mapsto  z^{q}$ with different $q$). 

\end{itemize}

Thanks to the work of Ku.\ Nishioka \cite{Ni90}, the transcendence 
theory of linear Mahler systems in one variable 
is well-developed. It has even reached a rather 
definitive stage after the recent works of Philippon \cite{PPH} and the authors \cite{AF1}.  
These new results provide tools to overcome (ii), and also (i) in some situations.   
However, Theorem \ref{thm: main} does not fall into the scope 
of Mahler's method in one variable. In particular, the problem raised by (iii) requires 
a major development of Mahler's method in several variables. Partial results in this direction are due to 
Mahler \cite{Ma30b}, Kubota \cite{Ku77}, Loxton and van der Poorten \cite{LvdP77II,LvdP82}, 
and Nishioka \cite{Ni96}.  
Last but not least, (iv) is a source of well-known difficulties and only limited results, though of great interest,  
have been obtained by Nishioka \cite{Ni94} and Masser \cite{Mas99}. 

In what follows, we develop a theory of linear Mahler systems in several variables 
from the perspective of transcendence and algebraic independence, which also includes the possibility of 
dealing with several  systems associated with sufficiently independent matrix transformations.  
It is condensed in three main general results, Theorems \ref{thm: permanence}, \ref{thm: purete2}, 
and \ref{thm: purity}, which go far beyond the existing literature.   
These results also surpass those of two unpublished preprints 
\cite{AF3,AF4}  
the authors made  
available on the arXiv in 2018. 
The main new feature with respect to these two preprints is that our results   
applies now without any restriction on the matrices defining the systems 
under consideration.  
{\it In fine}, the new approach we follow allows us to overcome all the aforementioned difficulties.         
To tell the truth, proving Conjectures \ref{conj: weakv} and \ref{conj: strongv}, and Corollary \ref{coro: fonctions} 
stated in Appendix \ref{preamble}, was our initial goal.    
In order to measure the relevance of the theory eventually developed in Section \ref{sec: mahlersv} to reach this goal, 
the reader is thus encouraged to look at Appendix \ref{preamble}.  
However, in our opinion, this theory is equally valuable in its own right.

\subsection*{Organization of the paper}

In Section \ref{sec: mahlersv}, we state our main results concerning the study of 
linear Mahler systems in several variables, namely Theorems \ref{thm: permanence}, 
\ref{thm: purete2}, and \ref{thm: purity}. We also discuss the three main new ingredients of our approach 
in Section \ref{sec: ingredient}. 
Some notation are introduced in Section \ref{sec: notation}. 
As made clear in Section \ref{sec: mahlersv}, the strength of our results strongly depends on 
our ability to provide simple and natural conditions that ensure certain admissibility conditions. 
This problem is addressed in Section \ref{sec: admissibility} where concrete and optimal conditions are given. 
In Section \ref{sec: vanishing} we prove a new vanishing theorem that is a key ingredient for proving 
Theorem \ref{thm: purity}.  
In Section \ref{sec: families}, we state Theorem \ref{thm: families}, a general result dealing with 
families of linear Mahler systems associated with sufficiently independent transformations.  
Some preliminary results for proving Theorem \ref{thm: families} are gathered in Section \ref{sec: matrix}. 
Then Theorem \ref{thm: families} is proved in Section \ref{sec: mainproof}, while     
Theorems \ref{thm: permanence},  \ref{thm: purete2}, \ref{thm: purity}, and Corollaries \ref{thm: Nishioka} and  
\ref{cor:2}  are derived from Theorem \ref{thm: families}  
in Section \ref{sec: final}. 
Finally, we deduce Theorem \ref{thm: main} from Theorems  \ref{thm: purete2} and \ref{thm: purity} in Section \ref{sec: theorem1}, and 
Conjectures \ref{conj: weakv}, \ref{conj: strongv}, and Corollary \ref{coro: fonctions} from Theorem \ref{thm: main}
in Appendix \ref{preamble}.  

\section{Mahler's method in several variables}\label{sec: mahlersv}

Let $n\geq 1$ be an integer and $\z=(z_1,\ldots,z_n)$ be a $n$-tuple of indeterminates. 
We let $\Q\{\z\}$ denote the ring of $n$ variables convergent power 
series with algebraic coefficients,      
and we set $\Q^\star:=\Q\setminus \{0\}$.  Given a field extension $\mathbb L$ of a field 
$\mathbb K$, and 
$a_1,\ldots,a_m$ in $\mathbb L$, 
we let ${\rm tr.deg}_{\mathbb K}(a_1,\ldots,a_m)$ denote the transcendence degree  of  $\mathbb K(a_1,\ldots,a_m)$ over 
$\mathbb K$. 

Given 
$f_1(\z),\ldots,f_m(\z)\in \Q\{\z\}$ related by a system  
of the form   
\eqref{eq:mahler} (see below) and $\balpha\in (\Q^\star)^n$ a point at which these functions are well-defined,    
 Mahler's method aims at transferring results about the absence of algebraic (resp., linear) relations  between 
$f_1(\z),\ldots,f_m(\z)$ over $\Q(\z)$ to the absence of algebraic 
(resp., linear) relations over $\overline{\mathbb Q}$ between the complex numbers 
$f_1(\balpha),\ldots,f_m(\balpha)$. In particular, a reoccurring theme consists in establishing the equality 
\begin{equation}\label{eq:degretranscendance0}
{\rm tr.deg}_{\Q}(f_1(\balpha),\ldots,f_m(\balpha))= {\rm tr.deg}_{\Q(\z)}(f_1(\z),\ldots,f_m(\z)) \, ,
\end{equation}
under some reasonable assumptions on $A(\z)$, $T$, and $\balpha$. 
This problem goes back to the pioneering work of 
Mahler \cite{Ma29,Ma30a,Ma30b} at the end of the 1920s. 
\subsection{Mahler's transformations and linear Mahler systems}

Let  $T=(t_{i,j})_{1\leq i,j\leq n}$ be an $n\times n$ matrix with 
nonnegative integer coefficients. 
We set 
$$
T \z = (z_1^{t_{1,1}}z_2^{t_{1,2}}\cdots z_n^{t_{1,n}},\ldots,
z_1^{t_{n,1}}z_2^{t_{n,2}}\cdots z_n^{t_{n,n}})\, , 
$$ 
and we let also $T$ act on $\mathbb C^n$ in a similar way.

\begin{defi}
A \emph{linear $T$-Mahler system}, or simply a \emph{Mahler system}, is a system 
of functional equations of the form 

\begin{equation}
\label{eq:mahler}
\left(\begin{array}{c} f_1(T\z) \\ \vdots \\ f_m(T\z) \end{array} \right) = 
A(\z)\left(\begin{array}{c} f_1(\z) \\ \vdots \\ f_m(\z) 
\end{array} \right)\, ,
\end{equation}
where $A(\z)\in{\rm GL}_m(\Q(\z))$. 
 A \emph{Mahler function} $f(\z)\in\Q\{\z\}$ is a coordinate of a vector representing a solution to 
a linear Mahler system.   
\end{defi}

\begin{defi}\label{def:reg}
A point $\balpha \in (\Q^\star)^n$ is said to be \emph{regular} with respect to the Mahler system 
\eqref{eq:mahler} if the matrix $A(\z)$ is well-defined and invertible at $T^k\balpha$ for all nonnegative integers $k$. 
\end{defi} 

\subsection*{Warning}
Independently of the choice of the Mahler system \eqref{eq:mahler}, there are some unavoidable 
restrictions that one has to impose on the matrix transformation $T$ and on the algebraic point $\balpha$. 
When these conditions are fulfilled, the pair $(T,\balpha)$ is said to be \emph{admissible}.  We postpone the definition of an admissible pair to Section \ref{sec: admissibility}, but let us just already say that, in this respect, our results are as general as possible.  
With this formalism, all our results are concerned with values at some algebraic point 
$\balpha$ of some Mahler functions $f_1(\z),\ldots,f_m(\z)$ related by a system  
of the form \eqref{eq:mahler} under the 
assumption that: 

\begin{itemize}
\item[(a)] the pair $(T,\balpha)$ is admissible,  

\item[(b)] $\balpha$ is regular with respect to \eqref{eq:mahler}. 
\end{itemize}
As discussed in \cite{Ad19}, these conditions are typical in Mahler's method. 

\subsection{The lifting theorem}

As a first contribution, we prove the following result.  
Let us recall that a field extension $\mathbb L$ of a field $\mathbb K$ is said to be \emph{regular}\footnote{The reader will take care that we use two totally different notions of regularity in this paper.} if $\mathbb K$ is algebraically closed in $\mathbb L$ and $\mathbb L$ is separable over $\mathbb K$. 
If $\balpha\in \Q^n$, we let 
$\overline{\mathbb Q(\z)}_{\balpha}$ denote the algebraic closure of $\Q(\z)$ in $\Q\{\z-\balpha\}$\footnote{ The ring $\Q\{\z-\balpha\}$ is the ring of convergent power series in $(\z-\balpha)$.}.

\begin{thm}[Lifting]
\label{thm: permanence}
Let $f_1(\z),\ldots,f_m(\z)\in \Q\{\z\}$ be related by a system of functional equations of the form  
\eqref{eq:mahler}. 
Let us assume that $\balpha\in  (\Q^\star)^n$ is a regular point with respect to \eqref{eq:mahler} and 
that the pair $(T,\balpha)$ is admissible. Then for every homogeneous polynomial 
$P \in \Q[X_1,\ldots,X_m]$ such that 
$$
P(f_{1}(\balpha),\ldots,f_{m}(\balpha)) = 0\,,
$$
there exists a homogeneous polynomial $Q\in\overline{\mathbb Q(\z)}_{\balpha}[X_1,\ldots,X_m]$  
such that 
\begin{eqnarray*}
Q(\z,f_{1}(\z),\ldots,f_{m}(\z)) = 0 & \mbox{ and } &
Q(\balpha,X_1,\ldots,X_m)=P(X_1,\ldots,X_m).
\end{eqnarray*}
Furthermore, if  $\Q(\z)(f_1(\z),\ldots,f_m(\z))$ is a regular extension of $\Q(\z)$, then there exists such a polynomial $Q$ in $\Q[\z,X_1,\ldots,X_m]$. 
\end{thm}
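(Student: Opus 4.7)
Set $R_{\balpha}:=\overline{\mathbb Q(\z)}_{\balpha}$ and consider, for each integer $d\ge 0$, the following $\Q$-vector space and $R_{\balpha}$-vector space of relations of degree $d$:
\begin{align*}
I_d &:= \{P\in \Q[X_1,\ldots,X_m]_d : P(f_1(\balpha),\ldots,f_m(\balpha))=0\},\\
J_d &:= \{Q\in R_{\balpha}[X_1,\ldots,X_m]_d : Q(f_1(\z),\ldots,f_m(\z))=0 \text{ in } \Q\{\z-\balpha\}\}.
\end{align*}
Because every element of $R_{\balpha}$ is, by definition, the germ at $\balpha$ of a convergent power series, it has a well-defined value at $\balpha$, and this yields a $\Q$-linear specialization map $\sigma\colon J_d\to \Q[X_1,\ldots,X_m]_d$, $Q\mapsto Q(\balpha,\cdot)$. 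The obvious chain of implications
\begin{equation*}
Q(f_1(\z),\ldots,f_m(\z))=0 \;\Longrightarrow\; Q(\balpha,f_1(\balpha),\ldots,f_m(\balpha))=0
\end{equation*}
shows that $\sigma(J_d)\subseteq I_d$. The entire content of the theorem is therefore the reverse inclusion $I_d\subseteq \sigma(J_d)$.

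\textbf{Main reduction via Hilbert functions.} The graded $\Q$-algebra $\Q[X_1,\ldots,X_m]/\bigoplus_d I_d$ is precisely the graded algebra of the affine point $(f_1(\balpha),\ldots,f_m(\balpha))$, while $R_{\balpha}[X_1,\ldots,X_m]/\bigoplus_d J_d$ is that of the generic point $(f_1(\z),\ldots,f_m(\z))$. The surjectivity of $\sigma$ in every degree is thus equivalent to the equality of the two Hilbert functions
\begin{equation*}
\dim_\Q \Q[X]_d/I_d \;=\; \dim_{R_{\balpha}} R_{\balpha}[X]_d/J_d \qquad (d\ge 0),
\end{equation*}
which in turn (by standard facts about Hilbert polynomials) is closely tied to the equality of transcendence degrees \eqref{eq:degretranscendance0}. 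I would therefore attempt to prove both statements simultaneously.

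\textbf{Using the functional equation.} The Mahler system endows each $J_d$ with a natural action induced by the $d$-th symmetric power $\mathrm{Sym}^d A(\z)\in \mathrm{GL}(R_{\balpha}^{N_d})$ of the matrix $A(\z)$, and by iteration each product $B_k(\z):=A(T^{k-1}\z)\cdots A(\z)$ sets up a linear isomorphism between the space of degree-$d$ relations among $f_1(T^k\z),\ldots,f_m(T^k\z)$ and that among $f_1(\z),\ldots,f_m(\z)$. Specializing at $\balpha$, we obtain a compatible family of isomorphisms on the value side, between the space of degree-$d$ relations at $f(\balpha)$ and the space at $f(T^k\balpha)$, provided $\balpha$ is regular and $(T,\balpha)$ admissible so that all iterates are well-defined and $A(T^k\balpha)$ is invertible. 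This is how admissibility enters. The strategy is then: if some $P\in I_d$ did \emph{not} lie in $\sigma(J_d)$, propagate $P$ by the $B_k(\balpha)^{-1}$ action to get an infinite sequence of non-liftable relations at the points $f(T^k\balpha)$, and use that $T^k\balpha$ accumulates at a controlled place (by admissibility) to manufacture, via an auxiliary analytic construction, a single analytic relation among $f_1(\z),\ldots,f_m(\z)$ vanishing to high order at $\balpha$. The new vanishing theorem of Section \ref{sec: vanishing} should force this analytic relation to vanish identically, yielding an element of $J_d$ specializing to $P$, a contradiction. This Step is the main obstacle: the surjectivity of $\sigma$ is the hard heart of the theorem, and is exactly where the admissibility of $(T,\balpha)$ and the Section \ref{sec: vanishing} input are indispensable; without them the orbit of $\balpha$ under $T$ could behave too erratically for any limiting procedure to succeed.

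\textbf{Regularity addendum.} Finally, suppose $\Q(\z)(f_1(\z),\ldots,f_m(\z))$ is a regular extension of $\Q(\z)$. Then $\Q(\z)$ is algebraically closed in the extension, so every algebraic relation among $f_1(\z),\ldots,f_m(\z)$ over $R_{\balpha}$ (equivalently over the algebraic closure of $\Q(\z)$) already comes from a relation over $\Q(\z)$ itself; consequently $J_d$ is spanned by $J_d\cap \Q(\z)[X_1,\ldots,X_m]_d$. Picking such a lift $Q$ with coefficients in $\Q(\z)$ and clearing denominators by a suitable power of a common denominator that does not vanish at $\balpha$ (possible because $R_{\balpha}$-valued evaluation at $\balpha$ remains defined), one obtains the desired $Q\in\Q[\z,X_1,\ldots,X_m]$ with the prescribed specialization at $\balpha$.
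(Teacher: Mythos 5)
There is a genuine gap: your proposal is a plan rather than a proof, and the plan breaks down at exactly the two places where all the work lies. First, the ``reduction via Hilbert functions'' is not valid in the direction you need. Equality of the Hilbert functions of $\Q[X]_d/I_d$ and $R_{\balpha}[X]_d/J_d$ is (a graded version of) Nishioka-type transcendence-degree equality, and it does \emph{not} formally imply surjectivity of the specialization map $\sigma$: knowing $\dim_\Q I_d=\dim_{R_{\balpha}}J_d$ does not prevent the specializations at $\balpha$ of an $R_{\balpha}$-basis of $J_d$ from becoming linearly dependent over $\Q$, so $\sigma(J_d)$ could a priori be a proper subspace of $I_d$. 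This is precisely why Philippon's lifting theorem is strictly stronger than Nishioka's theorem and why deriving the former from the latter (as in \cite{PPH,AF1,NS}) requires substantial additional input; the present paper deliberately avoids that route and instead obtains Theorem \ref{thm: permanence} as the case $r=1$ of Theorem \ref{thm: families}, with Corollary \ref{thm: Nishioka} (the transcendence-degree statement) deduced \emph{afterwards} from the lifting via Lemma \ref{lem:linearLifting} and Hilbert--Serre.

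Second, the step you yourself flag as ``the main obstacle'' --- manufacturing, from the non-liftable relation $P$ propagated along the orbit, an identically vanishing analytic relation --- is left entirely unconstructed, and the mechanism you invoke is not the right one. The vanishing theorem of Section \ref{sec: vanishing} (and, for $r=1$, Masser's theorem) asserts that a nonzero analytic function cannot vanish at $T^k\balpha$ for all $k$ in a piecewise syndetic set; it says nothing about a function ``vanishing to high order at $\balpha$'' being identically zero. What actually fills this gap in the paper is the whole apparatus of Sections \ref{sec: matrix} and \ref{sec: mainproof}: the relation matrices $\bphi$ produced by Hilbert's Nullstellensatz from the ideal $\I$ of polynomials vanishing at the points $(\bR_{\k_l}(\balpha),T_{\k_l}\balpha)$, the stability Lemma \ref{lem: phi2} proved with piecewise syndetic sets and Noetherianity, the auxiliary Pad\'e-type function of Lemma \ref{lem:fonctionauxiliaire}, and the clash between the analytic upper bound \eqref{eq: majprinc} and the Liouville lower bound \eqref{eq:minorationE}. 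None of this is present or replaced in your proposal. (Your regularity addendum is in the right spirit --- linear disjointness does let one descend the coefficients to $\Q(\z)$ --- but to keep the specialization $Q(\balpha,\cdot)=P$ one must decompose $Q$ over a power basis of the algebraic extension and re-evaluate the primitive element at $\balpha$, as done at the end of Section \ref{sec: mainproof}, rather than simply ``picking a lift''.)
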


Theorem \ref{thm: permanence} is the first that applies to \emph{all} 
linear Mahler systems in several variables, that is, without any restriction on the matrix $A(\z)$. Furthermore,  the quantitative Equality \eqref{eq:degretranscendance0} is replaced by a qualitative statement: 
any algebraic relation over $\Q$ between the values $f_1(\balpha),\ldots,f_m(\balpha)$  can be lifted to 
a similar algebraic relation over $\overline{\mathbb Q(\z)}$ between the functions $f_1(\z),\ldots,f_m(\z)$.  
Such a qualitative refinement is a key for applications. 

\begin{rem} 
Theorems \ref{thm: permanence}  also applies to nonhomogeneous polynomials, for we can always  
turn an inhomogeneous relation into an homogeneous one by adding the constant function 
$f_{0}\equiv 1$ to the system and replacing the matrix $A(\z)$ by 
$$\left(\begin{array}{c|ccc} 
1 &  &0&
\\ \hline 
\\ 
 0 &  &A(\z) &
 \\ &&&
\end{array}\right)\, .$$
\end{rem}

As a corollary of the lifting theorem, we deduce the following result.   

\begin{coro}
\label{thm: Nishioka}
Let $f_1(\z),\ldots,f_m(\z)\in \Q\{\z\}$ be related by a system of the form  
\eqref{eq:mahler}. 
Let us assume furthermore that $\balpha\in  (\Q^\star)^n$ is a regular point with respect to \eqref{eq:mahler} 
and 
that the pair $(T,\balpha)$ is admissible. Then 
\begin{equation}\label{eq:degretranscendance2}
{\rm tr.deg}_{\Q}(f_1(\balpha),\ldots,f_m(\balpha))= {\rm tr.deg}_{\Q(\z)}(f_1(\z),\ldots,f_m(\z)) \,.
\end{equation}
\end{coro}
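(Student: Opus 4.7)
I would derive Corollary \ref{thm: Nishioka} directly from Theorem \ref{thm: permanence} combined with the standard specialization inequality of Mahler's method. Set $d := {\rm tr.deg}_{\Q}(f_1(\balpha),\ldots,f_m(\balpha))$ and $D := {\rm tr.deg}_{\Q(\z)}(f_1(\z),\ldots,f_m(\z))$; the goal is to show $d = D$, and I would prove the two inequalities separately.

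For the nontrivial inequality $d \geq D$, I would invoke the lifting theorem. After relabeling, assume that $f_1(\balpha),\ldots,f_d(\balpha)$ is a transcendence basis over $\Q$ of the values. For each $j \in \{d+1,\ldots,m\}$, pick a nonzero polynomial $P_j \in \Q[X_1,\ldots,X_d,X_j]$ with $P_j(f_1(\balpha),\ldots,f_d(\balpha),f_j(\balpha))=0$. Following the remark after Theorem \ref{thm: permanence}, I would augment the system with the constant function $f_0 \equiv 1$ (this only adds a trivial $1\times 1$ block to $A(\z)$, so the pair $(T,\balpha)$ remains admissible and $\balpha$ remains regular) and homogenize $P_j$ into $\widetilde{P_j} \in \Q[X_0,X_1,\ldots,X_d,X_j]$. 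Theorem \ref{thm: permanence} applied to the augmented system then yields a homogeneous polynomial $\widetilde{Q_j} \in \overline{\mathbb Q(\z)}_{\balpha}[X_0,X_1,\ldots,X_m]$ with $\widetilde{Q_j}(\z,1,f_1(\z),\ldots,f_m(\z))=0$ and $\widetilde{Q_j}(\balpha,X_0,X_1,\ldots,X_m)=\widetilde{P_j}(X_0,X_1,\ldots,X_d,X_j)$. Setting $X_0=1$ produces a nonzero polynomial identity over $\overline{\mathbb Q(\z)}_{\balpha}$ annihilating $(f_1(\z),\ldots,f_m(\z))$. Applied uniformly to a generating family of the height-$(m-d)$ ideal of relations on the values, and using that $\overline{\mathbb Q(\z)}_{\balpha}$ is algebraic over $\Q(\z)$, this shows that the ideal of relations on the functions over $\Q(\z)$ has height at least $m-d$, giving $D \leq d$.

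The converse inequality $d \leq D$ is the classical specialization direction of Mahler's method: starting from any nonzero algebraic relation $P(\z, X) \in \Q[\z, X]$ satisfied by $D+1$ of the functions over $\Q(\z)$, a Taylor expansion argument inside the local ring $\Q\{\z-\balpha\}$ (relying on the holomorphy of each $f_i$ at $\balpha$ together with the regularity and admissibility of $(T,\balpha)$) extracts a nonzero polynomial in $\Q[X]$ vanishing on the corresponding values. The entire substance of the corollary is therefore packaged into Theorem \ref{thm: permanence}; once that is in hand, what remains is essentially ideal-theoretic bookkeeping. The only mild subtlety I anticipate is that the lifted polynomial $\widetilde{Q_j}$ may a priori involve variables beyond $X_0, X_1, \ldots, X_d, X_j$, so the cleanest route is to work with the full ideal of relations rather than with individual polynomials, comparing heights on both sides directly rather than reading off algebraicity of each $f_j(\z)$ over $\Q(\z)(f_1(\z),\ldots,f_d(\z))$ from a single lifted witness.
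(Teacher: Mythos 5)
Your architecture coincides with the paper's (the inequality ${\rm tr.deg}_{\Q}(f_1(\balpha),\ldots,f_m(\balpha))\leq {\rm tr.deg}_{\Q(\z)}(f_1(\z),\ldots,f_m(\z))$ is classical, and incidentally uses neither regularity nor admissibility, only that the $f_i$ are convergent power series with algebraic coefficients evaluated at an algebraic point; the reverse inequality is where Theorem \ref{thm: permanence} enters). The gap is the sentence ``this shows that the ideal of relations on the functions over $\Q(\z)$ has height at least $m-d$''. You correctly identify the obstruction --- the lifted polynomial may involve all of $X_1,\ldots,X_m$, the extra coefficients merely vanishing at $\balpha$, so one cannot read off algebraicity of $f_j(\z)$ over $\Q(\z)(f_1(\z),\ldots,f_d(\z))$ from a single lifted witness --- but the proposed repair is precisely where the work lies, and it is not bookkeeping. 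If $P_1,\ldots,P_s$ generate the ideal of relations of the values (height $m-d$) and $Q_1,\ldots,Q_s$ are lifts, there is no general implication from ``the specializations at $\balpha$ generate an ideal of height $m-d$'' to ``the ideal upstairs has height at least $m-d$'': specialization can both create and destroy components (compare $V(tX)$ and $V(tX-1)$ over $k[t]$ at $t=0$), so the heights of an ideal and of the ideal generated by the specialized generators are not monotonically related. To make your route work one must exploit that the special fibre is nonempty in a strong sense, namely that the tautological point $(f_1(\z),\ldots,f_m(\z))$ gives a section specializing to $(f_1(\balpha),\ldots,f_m(\balpha))$: descend to a finitely generated coefficient ring $A\subset\overline{\mathbb Q(\z)}_{\balpha}$ containing the coefficients of the $Q_i$, pass to the irreducible component $W$ of $V(Q_1,\ldots,Q_s)\subset\mathbb A^m_A$ through the tautological point (it dominates ${\rm Spec}\,A$ and its generic fibre has dimension $D$), and invoke the fibre-dimension theorem (every irreducible component of a nonempty fibre of a dominant morphism of irreducible finite-type schemes has dimension at least that of the generic fibre) at the point of $W$ over $\balpha$, giving $D\leq\dim W_{\balpha}\leq d$. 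None of this is in your sketch; without it the height comparison is an assertion, not a proof.

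For comparison, the paper sidesteps all of this by staying linear: it first deduces from Theorem \ref{thm: permanence} a linear statement (if $s$ of the functions are linearly independent over the algebraic closure of $\Q(\z)$, then at least $s$ of their values are linearly independent over $\Q$), applies it to the vector of monomials of degree at most $D$ in the $f_i$ --- which again satisfies a Mahler system with $\balpha$ regular, via Kronecker powers of $(1)\oplus A(\z)$ --- to obtain $\varphi_{\balpha}(D)\geq\varphi_{\z}(D)$ for the two Hilbert functions, and then reads off the transcendence degrees as the degrees of the Hilbert polynomials via the Hilbert--Serre theorem. Either supply the semicontinuity argument above, or switch to this Hilbert-function packaging of the same lifting input.
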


Now, let us compare Theorem \ref{thm: main} with previous results on the subject.

\subsubsection*{The case $n=1$}   In that case, the operator $T$ takes the simple 
form $z\mapsto z^q$, where $q\geq 2$ is an integer, and the pair $(T,\alpha)$ is admissible as soon as $0<\vert \alpha\vert <1$.  
Furthermore, the field extension $\Q(z)(f_1(z),\ldots,f_m(z))$ is always regular. 
 After several partial results due to Mahler, Kubota, and 
Loxton and van der Poorten, Ku.\ Nishioka \cite{Ni90} finally proved in 1990 that 
\begin{equation}\label{eq: degtr}
{\rm tr.deg}_{\Q}(f_1(\alpha),\ldots,f_m(\alpha))= {\rm tr.deg}_{\Q(z)}(f_1(z),\ldots,f_m(z)) \, 
\end{equation} 
for all matrices $A(z)$ and all regular points $\alpha \in \Q$, $0< \vert \alpha\vert<1$. 
This is certainly a landmark result in Mahler's method. 
The proof of Nishioka's theorem is based on some technics from commutative algebra first introduced in the framework of algebraic independence by Nesterenko.    
More recently, Philippon \cite{PPH} and then the authors \cite{AF1} refine 
Nishioka's theorem by proving the case $n=1$ of Theorem \ref{thm: permanence}, which we refer 
to as Philippon's lifting theorem.    
Similar lifting theorems 
have first been obtained in the framework of linear differential equations ({\it e.g.}, Siegel $E$-functions) by Nesterenko and Shidlovskii \cite{NS96}, 
by Beukers \cite{Beu06} using some results of Andr\'e \cite{An1,An2} on the theory of $E$-operators,  
and then by Andr\'e \cite{An3}. A recent proof of Philippon's lifting theorem in the spirit of \cite{An3} 
is also given in \cite{NS}. In \cite{PPH,AF1,NS}, the latter  
is derived from Nishioka's theorem, while  
our proof of Theorem \ref{thm: permanence} has little 
in common with these papers and \cite{Ni90}.  
In particular, it provides a new and more elementary way to prove the theorems of Nishioka and Philippon.

\subsubsection*{The case $n\geq 2$}  
Unfortunately, the method used for proving Nishioka's theorem hardly generalizes to higher dimension. 
In 1982, Loxton and van der Poorten \cite{LvdP82}  
published a paper claiming that 
\begin{equation}\label{eq:degretranscendance}
{\rm tr.deg}_{\Q}(f_1(\balpha),\ldots,f_m(\balpha))= {\rm tr.deg}_{\Q(\z)}(f_1(\z),\ldots,f_m(\z)) \,
\end{equation}
when the matrix $A(\boldsymbol 0)$ is well-defined and nonsingular, the pair $(T,\balpha)$ is admissible, 
and $\balpha$ is a regular algebraic point.  It was the main result published in this area, but 
unfortunately some argument in their proof is flawed. 
This is reported, for instance, by Nishioka in \cite{Ni90}. 
In the end, Mahler's method in several variables has been applied successfully only for the two following 
much more restricted classes of matrices. 
 In 1977, Kubota \cite{Ku77} proved  that Equality \eqref{eq:degretranscendance} holds 
true when the matrix $A(\z)$ is 
\emph{almost diagonal}, that is, when 
the functions $f_i(\z)$ satisfy a system of equations of the form 
\begin{equation}\label{eq:degre1}
\left(\begin{array}{c} 1\\ f_1(T\z) \\ \vdots \\ f_m(T\z)  \end{array} \right) = 
\left(\begin{array}{c|ccc} 
1 & 0\cdots &  & 0\\
\hline 
 b_1(\z)& a_1(\z)&&
\\  \vdots && \ddots &
\\ b_m(\z) &&& a_m(\z) 
 \end{array}\right)\left(\begin{array}{c} 1\\ f_1(\z) \\ \vdots \\ f_m(\z) 
\end{array} \right)\, 
\end{equation}
where $a_i(\z), b_i(\z) \in \Q(z)$ have no pole at $\boldsymbol 0$, and $a_i(\boldsymbol 0) \neq 0$.  
Such systems are precisely those arising from the study of several inhomogeneous equations of order one. 
A variant of this result is due to Nishioka \cite{Ni96}, who proved in 1996 that Equality \eqref{eq:degretranscendance} also holds true when the 
matrix $A(\z)$ is \emph{almost constant}, that is,  
for systems of the form  
\begin{equation}\label{eq:constante}
\left(\begin{array}{c} 1\\ f_1(T\z) \\ \vdots \\ f_m(T\z)  \end{array} \right) = 
\left(\begin{array}{c|ccc} 
1 & 0&\cdots   & 0
\\ 
\hline
b_1(\z) &&& 
\\
\vdots   & &B & 
\\ b_m(\z) &&& 
 \end{array}\right)\left(\begin{array}{c} 1\\ f_1(\z) \\ \vdots \\ f_m(\z) 
\end{array} \right)\, 
\end{equation}
where $B\in {\rm GL}_m(\Q)$, and $b_i(\z)\in \Q(\z)$ have no pole at $\boldsymbol 0$.  
 The proof of these results (and also of the failed attempt by  
Loxton and van der Poorten) follow closely the approach initiated by Mahler in \cite{Ma30b}. 
We stress that, so far, this remained the only available strategy to tackle this problem 
(see  Section \ref{sec: ingredient}).


\subsection{The two purity theorems}
According to the lifting theorem, the study of the algebraic relations between the values of Mahler functions 
related by a system of equations of the form  
\eqref{eq:mahler}
can be reduced to the easier study of the algebraic relations between the functions themselves.  
 However,  easier   
does not necessarily mean \emph{easy}, and, so far, only the linear relations between 
$M$-functions have been fully understood \cite{AF1,AF2}. 
Our second main result is of a different nature. It states that, when evaluated at sufficiently independent 
algebraic points, Mahler functions associated with transformations 
having the same spectral radius \emph{always} behave independently.  
The main feature of this result is that there is no need to check any kind of independence 
between the Mahler functions under consideration.  
 
To state this result properly, we first need some notation. 
Let us consider several tuples of complex numbers 
$$\mathcal  E_1=(\zeta_{1,1},\ldots,\zeta_{1,s_1}),\ldots, \mathcal  E_r
=(\zeta_{r,1},\ldots,\zeta_{r,s_r}) \,.$$ 
With every $i$, $1\leq i\leq r$, we associate 
a vector of indeterminates $\X_i=(X_{i,1},\ldots,X_{i,s_i})$, and we let 
$$
{\rm Alg}_{\Q}(\mathcal E_i):= \left\{P(\X_i)\in\Q[\X_i] : 
P(\zeta_{i,1},\ldots,\zeta_{i,s_i})=0\right\} \,
$$ 
denote the ideal of algebraic relations over $\Q$ between the coordinates of $\mathcal E_i$.  
We also consider the tuple  $\mathcal E=(\zeta_{1,1},\ldots,\zeta_{r,s_r})$ obtained by concatenation of the tuples $\mathcal E_i$, and we set $\X:=(\X_1,\ldots,\X_r)$ and  
$$
{\rm Alg}_{\Q}(\mathcal E):= \left\{P(\X)\in\Q[\X] : P(\zeta_{1,1},\ldots,\zeta_{r,s_r})
=0\right\} \,.
$$ 
We say that  $P\in {\rm Alg}_{\Q}(\mathcal E)$  
is a \emph{pure algebraic relation} with respect to $\mathcal E_i$ if 
it belongs to the extended ideal 
$$
{\rm Alg}_{\Q}(\mathcal E_i \mid \mathcal E) 
:=  {\rm span}_{\Q[\X]} \{ P(\X_i) : P\in {\rm Alg}_{\Q}(\mathcal E_i)  \}\, .
$$  
Our second main result reads as follows. 

\begin{thm}[Purity--Independent points]\label{thm: purete2} 
Let $r\geq 2$ be an integer. For every integer $i$, $1\leq i \leq r$, let us consider a linear Mahler system   
\begin{equation}\stepcounter{equation}
\label{eq:mahleri}\tag{\theequation .$i$}
\left(\begin{array}{c} f_{i,1}(T_i\z_i) \\ \vdots \\ f_{i,m_i}(T_i\z_i) \end{array} \right) = 
A_i(\z_i)\left(\begin{array}{c} f_{i,1}(\z_i) \\ \vdots \\ f_{i,m_i}(\z_i) 
\end{array} \right) 
\end{equation}
where $A_i(\z_i)$ belongs to ${\rm GL}_{m_i}(\Q(\z_i))$, $\z_i:=(z_{i,1},\ldots,z_{i,n_i})$ is a tuple of 
indeterminates, and $T_i$ is an $n_i\times n_i$ matrix
with nonnegative  integer coefficients and with spectral radius 
$\rho(T_i)$. Let   $\balpha_i =(\alpha_{i,1},\ldots,\alpha_{i,n_i})\in (\Q^\star)^{n_i}$, 
$\mathcal E_i$ be a subtuple of 
$(f_{i,1}(\balpha_i),\ldots,f_{i,m_i}(\balpha_i))$,    
and $\mathcal E=(\mathcal E_1,\ldots,\mathcal E_r)$. 
Suppose that the two following conditions hold. 

\begin{enumerate}

\item[\rm{(i)}]   For every $i$, $\balpha_i$ is regular w.r.t. \eqref{eq:mahleri} and $(T_i,\balpha_i)$ is admissible. 

\item[\rm{(ii)}]   $\rho(T_1)=\cdots = \rho(T_r)$ and there is no nonzero tuple $(\bmu_1,\ldots,\bmu_r)\in \mathbb Z^N$, 
$N=n_1+\cdots+n_r$, such that $(T_1^k\balpha_1)^{\bmu_1}\cdots (T_r^k\balpha_r)^{\bmu_r}=1$, 
for all $k$ in an arithmetic progression.  

\end{enumerate}
Then
$$
{\rm Alg}_{\Q}(\mathcal E) = \sum_{i=1}^r {\rm Alg}_{\Q}(\mathcal E_i\mid \mathcal E)\,.
$$
\end{thm}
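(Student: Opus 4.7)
\textbf{Proof plan for Theorem \ref{thm: purete2}.}

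The guiding idea is to merge the $r$ Mahler systems into a single "block-diagonal" Mahler system in the variables $\z = (\z_1,\ldots,\z_r)$, apply the Lifting Theorem \ref{thm: permanence} to it, and then exploit the fact that the variable sets $\z_1,\ldots,\z_r$ are disjoint to show that every algebraic relation among the combined functions decomposes into a sum of relations involving only one block at a time. Evaluating at $\balpha$ will then translate this decomposition into the desired purity statement.

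First I would introduce the combined transformation $T := T_1 \oplus \cdots \oplus T_r$ acting on $\z$, the block-diagonal matrix $A(\z) := \mathrm{diag}\bigl(A_1(\z_1),\ldots,A_r(\z_r)\bigr)$, and the combined point $\balpha := (\balpha_1,\ldots,\balpha_r) \in (\Q^\star)^N$. The vector obtained by concatenating the solution vectors of the systems \eqref{eq:mahleri} satisfies the linear $T$-Mahler system with matrix $A(\z)$. The regularity of $\balpha$ with respect to the combined system is immediate from the block-diagonal structure and the regularity of each $\balpha_i$. The genuinely delicate point is the admissibility of the pair $(T,\balpha)$; this is precisely where condition (ii) enters. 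The common spectral radius $\rho(T_1)=\cdots=\rho(T_r)$ guarantees that $T$ shares the spectral behaviour of each $T_i$ (since $\rho(T) = \max_i \rho(T_i)$), and the hypothesis on the absence of nontrivial multiplicative relations $(T_1^k\balpha_1)^{\bmu_1}\cdots(T_r^k\balpha_r)^{\bmu_r}=1$ along arithmetic progressions is exactly the Diophantine condition needed to verify the criterion for admissibility given in Section \ref{sec: admissibility}.

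Once admissibility is in place, Theorem \ref{thm: permanence} produces, for every homogeneous polynomial $P \in \Q[\X]$ vanishing on $\mathcal E$, a homogeneous polynomial $Q \in \overline{\Q(\z)}_{\balpha}[\X]$ with $Q(\z, f_{1,1}(\z_1),\ldots,f_{r,m_r}(\z_r)) = 0$ and $Q(\balpha,\X)=P(\X)$. The remaining task is to argue that $Q$, and hence $P$, admits a decomposition $Q = \sum_{i=1}^r Q_i$ in which each $Q_i$ lies in the extended ideal generated by algebraic relations (over a suitable field) among $f_{i,1}(\z_i),\ldots,f_{i,m_i}(\z_i)$ alone. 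The key algebraic input is that, because the $\z_i$ form disjoint blocks of indeterminates, the subfields $\overline{\Q(\z_i)}_{\balpha_i}\bigl(f_{i,1}(\z_i),\ldots,f_{i,m_i}(\z_i)\bigr)$ of $\overline{\Q(\z)}_{\balpha}$ are linearly disjoint over $\overline{\Q}$ (more precisely, $\Q(\z_1),\ldots,\Q(\z_r)$ are free extensions of $\Q$, and their relative algebraic closures inside $\overline{\Q(\z)}_{\balpha}$ inherit this disjointness). Consequently the functional ideal of relations among the $f_{i,j}(\z_i)$ equals the sum of the ideals of relations within each block; specialising this decomposition at $\z=\balpha$ and using $Q(\balpha,\X)=P(\X)$ yields the required expression $P = \sum_i P_i$ with $P_i \in \mathrm{Alg}_{\Q}(\mathcal E_i\mid \mathcal E)$.

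The main obstacle is the admissibility verification in the first step: one has to translate condition (ii) into the precise admissibility criterion for the block-diagonal pair $(T,\balpha)$, which is a nontrivial statement about the joint orbit $\{T^k\balpha\}_{k\ge 0}$. A secondary technical point is the linear-disjointness argument at the functional level; since the Lifting Theorem forces us to work over the algebraic closure $\overline{\Q(\z)}_{\balpha}$ rather than $\Q(\z)$, we must be careful to establish disjointness at the level of this large field and then descend it to $\Q$ upon specialisation at $\balpha$. Both steps are, in my view, the heart of the argument; the structural decomposition itself is then a formal consequence.
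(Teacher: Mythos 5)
Your first step coincides with the paper's: the systems are gathered into a single block--diagonal $T$-Mahler system with $T=T_1\oplus\cdots\oplus T_r$, and conditions (i)--(ii) together with Theorem \ref{thm:masser} give admissibility of $(T,\balpha)$ (the common spectral radius is what makes the concatenation of positive eigenvectors an eigenvector of $T$, and the second part of (ii) is exactly $T$-independence of $\balpha$). From there, however, you diverge. The paper does \emph{not} lift an individual relation $P$ and decompose the resulting $Q$; it first proves the purely quantitative statement ${\rm tr.deg}_{\Q}(\mathcal E)=\sum_i{\rm tr.deg}_{\Q}(\mathcal E_i)$ (Corollary \ref{cor:2}, obtained by applying Corollary \ref{thm: Nishioka} to the merged system and to each block, plus the trivial additivity of transcendence degrees for functions in disjoint variables), and then converts this into the ideal identity by a height argument carried out entirely over $\Q=\overline{\mathbb Q}$: the sum $\sum_i{\rm Alg}_{\Q}(\mathcal E_i\mid\mathcal E)$ is prime because a tensor product of integral domains over an \emph{algebraically closed} field is a domain, it has the same height as ${\rm Alg}_{\Q}(\mathcal E)$ by the transcendence degree count, and a containment of primes of equal height is an equality.

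The gap in your plan is precisely the step the paper avoids: the decomposition $Q=\sum_i Q_i$ at the functional level. To write a given element of the relation ideal of $(f_{1,1}(\z_1),\ldots,f_{r,m_r}(\z_r))$ over $\overline{\mathbb Q(\z)}_{\balpha}$ (or its fraction field $\mathbb K'$) as a sum of elements of the extended pure ideals, you need the ideal identity ${\rm Alg}_{\mathbb K'}(\mathcal F)=\sum_i{\rm Alg}_{\mathbb K'}(\mathcal F_i\mid\mathcal F)$ for the functions themselves. Additivity of transcendence degrees (which disjointness of the variable blocks does give) is not enough: one must also know that the sum of the extended pure ideals is prime, i.e.\ that $\bigotimes_{\mathbb K'}\mathbb K'[\X_i]/{\rm Alg}_{\mathbb K'}(\mathcal F_i)$ is a domain. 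Over the non--algebraically--closed field $\mathbb K'$ this requires the extensions $\mathbb K'(\mathcal F_i)/\mathbb K'$ to be linearly disjoint (e.g.\ regular), which is exactly the hypothesis that the Lifting Theorem itself cannot guarantee in general -- note that Theorem \ref{thm: permanence} only produces $Q$ with coefficients in $\Q[\z]$ under an extra regularity assumption on $\Q(\z)(\f(\z))$. Passing instead to an abstract algebraic closure of $\Q(\z)$ to restore primality destroys the ability to specialize at $\balpha$. So either you must prove this functional linear--disjointness statement (which is a genuine piece of work, not a formality), or you should reroute the second half of the argument as the paper does: deduce only the transcendence--degree equality from the lifting machinery, and perform the prime--ideal/height comparison at the level of the numerical tuples $\mathcal E_i$, where the base field is $\overline{\mathbb Q}$ and primality of the sum is automatic.
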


In other words, the only algebraic relations between the coordinates of $\mathcal E$ are those that 
can be trivially derived from the pure algebraic relations with respect to the coordinates of each $\mathcal E_i$. 

The first results dealing with values of Mahler functions at independent points are due to Mahler \cite{Ma30a} and are limited to linear independence over $\Q$.  
Some generalization are due to Kubota \cite{Ku77} and to Loxton and van der Poorten \cite{LvdP77II}.  
All these results are restricted to the study of several  inhomogeneous equations of order one.

\begin{rem}\label{rem: ReformulationTindependece}
Condition  (ii) is clearly satisfied when all the algebraic numbers $\alpha_{1,1},\ldots,\alpha_{r,n_r}$ 
are multiplicatively independent. 
\end{rem}

Let us turn to our third main result.  
It states that values at algebraic points of Mahler functions associated with  
sufficiently independent transformations  
\emph{always} behave independently.   As with Theorem \ref{thm: purete2}, the main advantage 
is that there is no need to 
check any kind of functional independence.   
Again, this result  
is expressed in terms of purity.

\begin{thm}[Purity--Independent transformations]
\label{thm: purity}
We continue with the notation of Theorem \ref{thm: purete2}. 
Suppose that the two following conditions hold.

\begin{enumerate}

\item[\rm{(i)}]  For every $i$, $\balpha_i$ is regular w.r.t. \eqref{eq:mahleri} and $(T_i,\balpha_i)$ is admissible.

\item[\rm{(ii)}]   The spectral radii $\rho(T_1),\ldots,\rho(T_r)$ are pairwise multiplicatively independent. 

\end{enumerate}
Then
$$
{\rm Alg}_{\Q}(\mathcal E) = \sum_{i=1}^r {\rm Alg}_{\Q}(\mathcal E_i\mid \mathcal E)\,.
$$
\end{thm}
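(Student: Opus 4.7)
The natural route is to reduce Theorem~\ref{thm: purity} to the general Theorem~\ref{thm: families} on families of Mahler systems associated with sufficiently independent transformations, in the same spirit in which the companion result Theorem~\ref{thm: purete2} will be derived. First I would regroup all the data into one ambient setting: on the tuple $\z=(\z_1,\ldots,\z_r)$ of $N=n_1+\cdots+n_r$ indeterminates I consider the $r$ pairwise commuting matrix transformations $\widetilde T_i$ that act as $T_i$ on the $i$-th block of variables and as the identity on the remaining blocks, together with the combined point $\balpha=(\balpha_1,\ldots,\balpha_r)\in(\Q^\star)^N$. On this enlarged variable set every $f_{i,j}(\z_i)$ becomes a $\widetilde T_i$-Mahler function, the various Mahler equations \eqref{eq:mahleri} retain their shape, and hypothesis~(i) of Theorem~\ref{thm: purity} transfers to an admissibility and regularity statement for each pair $(\widetilde T_i,\balpha)$, using the sharp criteria given in Section~\ref{sec: admissibility}.

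Next I would apply the lifting theorem (Theorem~\ref{thm: permanence}) separately to each of the $r$ systems in order to encode any element of $\mathrm{Alg}_{\Q}(\mathcal E)$ as a polynomial identity between the Mahler functions over a suitable algebraic closure of $\Q(\z)$. The whole problem is then to show that such a lifted relation splits as a sum of relations, each of which involves only the functions belonging to one of the $r$ systems; this is precisely the functional analogue of the desired purity. This functional purity should be extracted directly from Theorem~\ref{thm: families}: the hypothesis of pairwise multiplicatively independent spectral radii $\rho(T_1),\ldots,\rho(T_r)$ is tailored to match the independence condition on the family $(\widetilde T_1,\ldots,\widetilde T_r)$ required by that theorem, because the asymptotic behaviour of the coordinates of $\widetilde T_i^k\balpha$ is governed by $\rho(T_i)^k$, so that a multiplicative relation between the iterates of different systems would force a multiplicative relation between the $\rho(T_i)$, and conversely.

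Once functional purity is in place, specialising at $\balpha$ and reading the lifting theorem backwards yields
\[
{\rm Alg}_{\Q}(\mathcal E) \;=\; \sum_{i=1}^{r} {\rm Alg}_{\Q}(\mathcal E_i\mid \mathcal E)\,,
\]
which is the conclusion of Theorem~\ref{thm: purity}.

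The hard part, as I see it, lies in the two verifications that feed the reduction. First, one must confirm that admissibility and regularity of each individual pair $(T_i,\balpha_i)$ propagate to each extended pair $(\widetilde T_i,\balpha)$; the presence of a block of coordinates on which $\widetilde T_i$ acts trivially is delicate and needs the optimal admissibility conditions of Section~\ref{sec: admissibility}. Second, and more substantially, one must show that pairwise multiplicative independence of the spectral radii is genuinely equivalent to the independence of transformations that Theorem~\ref{thm: families} demands; this is where the whole mechanism of Mahler's method with several transformations, and in particular the new vanishing theorem of Section~\ref{sec: vanishing}, must be brought to bear in order to rule out any hidden multiplicative commensurability between orbits arising from different blocks.
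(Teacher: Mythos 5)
Your overall instinct — reduce everything to Theorem~\ref{thm: families} — is the right one, but two parts of the plan do not work as described. First, the auxiliary transformations $\widetilde T_i$ acting as $T_i$ on the $i$-th block of variables and as the identity elsewhere are both unnecessary and inadmissible: an identity block has eigenvalue $1$, so $\widetilde T_i$ violates condition (ii) of the class $\M$ (no eigenvalue a root of unity), and the orbit $\widetilde T_i^k\balpha$ does not tend to $\boldsymbol 0$ on the untouched coordinates; no appeal to Section~\ref{sec: admissibility} can repair this. Theorem~\ref{thm: families} is already formulated for $r$ separate systems in disjoint variable blocks with a single iteration vector $\k=(k_1,\ldots,k_r)$, and its hypothesis (ii) is \emph{verbatim} the pairwise multiplicative independence of $\rho(T_1),\ldots,\rho(T_r)$ — so the "second, more substantial verification" you flag (matching the spectral-radius hypothesis to the independence condition of Theorem~\ref{thm: families}) is vacuous, not hard.

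The genuine gap is the passage from the lifted functional relation to the ideal identity ${\rm Alg}_{\Q}(\mathcal E)=\sum_i{\rm Alg}_{\Q}(\mathcal E_i\mid\mathcal E)$. You assert that "functional purity should be extracted directly from Theorem~\ref{thm: families}", but that theorem only \emph{lifts} a numerical relation to a functional one; it says nothing about \emph{splitting} a functional relation into pure pieces, and the splitting has nothing to do with the spectral radii — it comes from the disjointness of the variable sets $\z_1,\ldots,\z_r$. The paper closes this gap in two steps that are absent from your proposal: (a) it first proves the transcendence-degree additivity of Corollary~\ref{cor:2}, by combining Theorem~\ref{thm: families} with the Hilbert--Serre counting argument of Corollary~\ref{thm: Nishioka} for the combined values, Corollary~\ref{thm: Nishioka} for each individual system, and the elementary Lemma~\ref{lem:independantvariables} on functions of disjoint variables; (b) it then upgrades the numerical equality of transcendence degrees to the ideal equality by observing that $\mathcal I:=\sum_i{\rm Alg}_{\Q}(\mathcal E_i\mid\mathcal E)$ is prime (a tensor product of integral domains over the algebraically closed field $\Q$ is a domain) and has height $\sum_i{\rm ht}({\rm Alg}_{\Q}(\mathcal E_i))$, so that the trivial inclusion $\mathcal I\subset{\rm Alg}_{\Q}(\mathcal E)$ between two prime ideals of equal height forces equality. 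Without step (b), or an explicit proof of the functional purity over $\overline{\mathbb Q(\z)}_{\balpha}$ followed by specialization, the argument is incomplete.
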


In 1976, Kubota \cite{Ku76} and van der Poorten \cite{vdP76}, 
first envisaged the possibility of extending  Mahler's method in order to consider simultaneously 
several Mahler systems associated with independent transformations.  
In \cite{Ku76}, Kubota gave a sketch of proof in a very specific case and announced a paper on this problem, 
but the latter never appeared in print. 
Then Loxton and van der Poorten \cite{LvdP78} stated some related result, but the corresponding proof is  incomplete (see  \cite[p.\ 89]{Ni94}). 
In 1987, van der Poorten \cite{vdP87} made this guess more ambitious and  precise, 
pointing out several striking consequences that would follow from results he expected to 
prove in his collaboration with Loxton. However, these authors did not publish any new 
paper on this problem.  
In the end, only examples limited to the study of several inhomogeneous equations of order one 
have been worked out by Nishioka \cite{Ni94} and Masser \cite{Mas99}.    
In contrast, Theorem \ref{thm: purity}  
applies to arbitrary linear Mahler systems, 
and to a much larger class of transformation matrices and algebraic points.

Of course, Theorems \ref{thm: purete2} and \ref{thm: purity} are  
 strong statements about algebraic independence.   

\begin{coro}\label{cor:2}
We continue with the assumptions of Theorems \ref{thm: purete2} or \ref{thm: purity}. 
The following equality holds true:
$$
{\rm tr.deg}_{\Q}(\mathcal E) = \sum_{i=1}^r {\rm tr.deg}_{\Q}(\mathcal E_i)\,.
$$
\end{coro}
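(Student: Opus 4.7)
The strategy is to reinterpret the purity conclusion from Theorem \ref{thm: purete2} or Theorem \ref{thm: purity} as an isomorphism of $\Q$-algebras, and then to compute transcendence degrees via Krull dimension.

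First I would set up the algebraic reformulation. Write $I_i := {\rm Alg}_{\Q}(\mathcal E_i) \subseteq \Q[\X_i]$ and observe that ${\rm Alg}_{\Q}(\mathcal E_i\mid \mathcal E) = I_i \Q[\X]$. Under the natural identification $\Q[\X] \cong \Q[\X_1]\otimes_{\Q}\cdots\otimes_{\Q}\Q[\X_r]$, the kernel of the surjection
$$
\Q[\X] \;\longrightarrow\; \bigotimes_{i=1}^r \Q[\X_i]/I_i
$$
is precisely $\sum_{i=1}^r I_i\Q[\X]$. The purity equality ${\rm Alg}_{\Q}(\mathcal E) = \sum_{i=1}^r {\rm Alg}_{\Q}(\mathcal E_i\mid \mathcal E)$ supplied by Theorem \ref{thm: purete2} or Theorem \ref{thm: purity} therefore yields a canonical $\Q$-algebra isomorphism
$$
\Q[\mathcal E] \;\cong\; \bigotimes_{i=1}^r \Q[\mathcal E_i] \,.
$$

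Next I would invoke the following standard fact: since $\Q$ is algebraically closed, the tensor product over $\Q$ of integral domains which are $\Q$-algebras is again an integral domain, and the Krull dimension (equivalently, the transcendence degree of the fraction field over $\Q$) is additive, i.e.
$$
\dim \bigotimes_{i=1}^r \Q[\mathcal E_i] \;=\; \sum_{i=1}^r \dim \Q[\mathcal E_i] \;=\; \sum_{i=1}^r {\rm tr.deg}_{\Q}(\mathcal E_i)\,.
$$
Combining with the isomorphism above and the identity $\dim \Q[\mathcal E] = {\rm tr.deg}_{\Q}(\mathcal E)$ gives the desired equality.

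There is no real obstacle in this argument: the purity theorems do all the heavy lifting, and the remaining content is the well-known additivity of transcendence degree under tensor products over an algebraically closed base field. The only mild care needed is in checking that the quotient $\Q[\X]/\sum_i I_i\Q[\X]$ is indeed the tensor product of the $\Q[\X_i]/I_i$, but this is immediate from the universal property of tensor products applied to the polynomial ring decomposition $\Q[\X] \cong \bigotimes_i \Q[\X_i]$. Alternatively, one can phrase the whole argument more concretely by picking transcendence bases $\mathcal B_i \subseteq \mathcal E_i$ of size $d_i = {\rm tr.deg}_{\Q}(\mathcal E_i)$, observing that the concatenation $\mathcal B$ generates $\Q(\mathcal E)$ as an algebraic extension (giving $\leq$), and using the purity decomposition to rule out any nontrivial polynomial relation among $\mathcal B$ (giving $\geq$); both routes lead to the same conclusion.
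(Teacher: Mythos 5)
Your argument is internally coherent: granting the ideal-theoretic purity statement, the identification $\Q[\X]/\sum_i I_i\Q[\X]\cong\bigotimes_i\Q[\X_i]/I_i$ and the additivity of Krull dimension for tensor products of finitely generated domains over the algebraically closed field $\Q$ do yield the transcendence degree equality. The problem is that this derivation is circular relative to the paper's logical architecture. The paper states explicitly at the start of Section \ref{sec: final} that Corollary \ref{cor:2} is established \emph{before} Theorems \ref{thm: purete2} and \ref{thm: purity}, and that those two theorems are \emph{deduced from} the corollary: one shows that ${\rm Alg}_{\Q}(\mathcal E)$ and $\mathcal I:=\sum_i{\rm Alg}_{\Q}(\mathcal E_i\mid\mathcal E)$ are both prime ideals (primality of $\mathcal I$ coming from exactly the tensor-product isomorphism you describe), that the corollary forces them to have the same height via ${\rm ht}={\rm (number\ of\ variables)}-{\rm tr.deg}$, and that together with the trivial inclusion $\mathcal I\subset{\rm Alg}_{\Q}(\mathcal E)$ this gives equality. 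So you have assembled the right ingredients but pointed the implication in the wrong direction: the purity theorems are not available when the corollary must be proved, and invoking their conclusions here would make the whole chain circular. Note also that the corollary is phrased as holding under the \emph{assumptions} (i)--(ii) of those theorems, not under their conclusions.

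The paper's actual proof proceeds from the lifting results instead. Under the hypotheses of Theorem \ref{thm: purete2}, the $r$ systems are gathered into a single $T$-Mahler system with $T=T_1\oplus\cdots\oplus T_r$; Theorem \ref{thm:masser} shows $(T,\balpha)$ is admissible and $\balpha$ regular, so Corollary \ref{thm: Nishioka} gives ${\rm tr.deg}_{\Q}$ of \emph{all} the values equal to ${\rm tr.deg}_{\Q(\z)}$ of all the functions. Because the $r$ families of functions involve pairwise disjoint sets of variables, the functional transcendence degree is additive (Lemma \ref{lem:independantvariables}); applying Corollary \ref{thm: Nishioka} to each individual system converts each summand back into ${\rm tr.deg}_{\Q}$ of the corresponding values, and Lemma \ref{lem:transcendancedegree} then passes from the full tuples $(f_{i,1}(\balpha_i),\ldots,f_{i,m_i}(\balpha_i))$ to the subtuples $\mathcal E_i$. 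Under the hypotheses of Theorem \ref{thm: purity} the direct sum is no longer admissible (the spectral radii are multiplicatively independent), and the global equality is obtained from Theorem \ref{thm: families} instead. If you want to repair your write-up, the fix is to replace the appeal to the purity theorems by an appeal to Corollary \ref{thm: Nishioka} (respectively Theorem \ref{thm: families}) plus the disjoint-variables observation; your tensor-product computation then belongs to the subsequent step, where the paper upgrades the numerical equality to the statement about ideals.
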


\begin{rem}
In geometric terms, Theorems \ref{thm: purete2} and \ref{thm: purity} can be rephrased by saying 
that the affine $\Q$-variety associated with the ideal 
${\rm Alg}_{\Q}(\mathcal E)$ is isomorphic to the cartesian product of the affine $\Q$-varieties 
associated with the ideals ${\rm Alg}_{\Q}(\mathcal E_i)$, $1\leq i\leq r$. Indeed, 
we prove that their coordinate rings are isomorphic. That is,  
$$
\frac{\Q[\X]}{ {\rm Alg}_{\Q}(\mathcal E)} \cong \frac{\Q[\X_1]}{{\rm Alg}_{\Q}(\mathcal E_1)} \otimes_{\Q} \cdots \otimes_{\Q} \frac{\Q[\X_r]}{{\rm Alg}_{\Q}(\mathcal E_r)}\, \cdot 
$$
\end{rem}
\subsection{Main new ingredients}\label{sec: ingredient}
As already mentioned, all previous results concerning the transcendence theory of linear Mahler systems 
in several variables are very much inspired by the early 
work of Mahler \cite{Ma30b}. 
We also start with the same initial strategy, but we add a number of fundamental new ingredients, including 
Hilbert's Nullstellensatz, tools from ergodic Ramsey theory, and a new vanishing 
theorem. 
 
 In all previous works, a crucial step consists in expressing the coordinates of the 
 iterated matrix  
$A_k(\z):=A(\z)A(T\z)\cdots A(T^{k-1}\z)$ associated with the Mahler system \eqref{eq:mahler}
in terms of linear combination some convergent power series of the form $g_i(T^k\z)$, 
possibly twisted by some multivariate 
exponential polynomials. 
This is really of great importance for one can then apply some vanishing theorems to the power series $g_i(\z)$. 
This step has gradually become more difficult in the aforementioned works,  
as the matrices under consideration have taken a more general form.   
Its complexity culminated in \cite{AF3}. 
Unfortunately, one cannot expect to find this kind of expression  
when $A(\z)$ is not regular singular in the sense of \cite[Definition 1.1]{AF3}. 
Hence this strategy suffers from an intrinsic limitation, which prevents from dealing with  
arbitrary Mahler systems. We overcome this main deficiency 
by defining the so-called \emph{relation matrices} in Section \ref{sec: matrix}. Their existence 
and main properties  
are obtained by means of Hibert's Nullstellensatz and the notion of piecewise syndetic set.   
Introducing these matrices is a cornerstone of the present work and the main novelty with respect to 
our two unpublished preprints 
\cite{AF3,AF4}.  

In order to apply our results to transformations $T$ and points $\balpha$ that are as general 
as possible, it is of great importance to prove suitable \emph{vanishing theorems}.  
That is, results that guarantee the nonvanishing of arbitrary multivariate analytic functions at special sets of points  
(typically, certain subsets of  $\{T^{k}\balpha,\, k\geq 0\}$). 
In the case of a single transformation, Masser \cite{Mas82} solved this problem 
in a rather definitive way.  
We note that Masser's vanishing theorem (in fact a refinement using the notion of piecewise syndetic set) is already strong enough to prove Theorems \ref{thm: permanence} and \ref{thm: purete2}.   
In fact, we only need the identity theorem for reproving Nishioka's theorem and Philippon's lifting theorem. 
Unfortunately, Masser's vanishing theorem  is not suited to 
deal with Mahler systems associated with independent transformations. 
First results towards this goal were proved 
by Nishioka \cite{Ni94} and, again, by Masser \cite{Mas99}. Unfortunately,  
they remain too restricted for proving Theorem \ref{thm: purity}.  
In 2005, Corvaja and Zannier \cite[Theorem 3]{CZ05} deduced from the subspace theorem a general  result 
concerning the vanishing at $\mathcal S$-units of analytic multivariate power series with algebraic coefficients. 
They already noticed that it could be relevant for Mahler's method. 
Using the flexibility of their result and the notion of piecewise syndetic set, we cook up 
in Section \ref{sec: vanishing} our own vanishing theorem, 
which is specifically shaped for our purpose.

\subsection{Relevance of Mahler's formalism} 

To end this section, we recall two major advantages that this multivariate 
formalism offers. 

First, adding variables makes it possible to deal with values of $M$-functions at 
\emph{different algebraic points}.  
Let us give a basic example. With the function $\mathfrak f(z)=\sum_{n=0}^{\infty} z^{2^n}$, 
we can associate  
the two variables linear $T$-Mahler system 
\begin{equation}\label{eq:ex}
\begin{pmatrix}
1 \\ \mathfrak f(z_1^{2}) \\ \mathfrak f(z_2^{2})
\end{pmatrix} =   
\begin{pmatrix} 
1 & 0 & 0\\
-z_1 & 1& 0 \\
-z_2 & 0 & 1 \\
 \end{pmatrix} \begin{pmatrix}
 1 \\ \mathfrak  f(z_1) \\  \mathfrak f(z_2)
 \end{pmatrix} \, , \quad \mbox {where }\; T=\begin{pmatrix}
2& 0\\
0 & 2 \\
 \end{pmatrix}\,.
\end{equation}
By Theorem \ref{thm:masser}, the point $\balpha:=(1/2,1/3)$ is regular w.r.t.\ \eqref{eq:ex}  
and the pair $(T,\balpha)$ is admissible. 
The key point is that the transcendence of $\mathfrak f(z)$ gives  
\emph{for free} the algebraic independence over 
$\overline{\mathbb Q}(z_1,z_2)$ 
of the functions $\mathfrak  f(z_1)$ and $\mathfrak f(z_2)$. 
By Corollary \ref{thm: Nishioka}, it follows that  
$\mathfrak f(1/2)$ and $\mathfrak f(1/3)$ are algebraically independent over $\Q$.   
This important principle really takes shape, and acquires great generality, 
with Theorems \ref{thm: main} and  \ref{thm: purete2}.  

The second advantage of Mahler's multivariate formalism comes from the possibility of dealing with  
 a much larger class of one-variable functions obtained by 
suitable specializations of Mahler functions in several variables. 
Mahler's favorite example is the family of the Hecke-Mahler functions   
$$
\mathfrak f_{\omega}(z)=\sum_{n=0}^{\infty}\lfloor n\omega\rfloor z^n \,,
$$ 
where $\omega$ is a quadratic irrational real number. Though  $\mathfrak f_{\omega}(z)$ 
is not  an $M$-function\footnote{This fact only very recently became known. It is an easy consequence of Theorem \ref{thm: purity}, but it can also be obtained by combining the results in \cite{ABS} and \cite{Du11}.},  we have that $\mathfrak f_{\omega}(z)= F_{\omega}(z,1)$, where 
$$
F_{\omega}(z_1,z_2)=\sum_{n_1=0}^{\infty} \sum_{n_2=0}^{\lfloor n_1\omega\rfloor}z_1^{n_1}z_2^{n_2}
$$ 
is a Mahler function in two variables.  
In another direction, Cobham \cite{Co68} proved that generating functions of morphic sequences 
are specializations of the form $F(z,\ldots,z)$ for some multivariate Mahler functions $F(z_1,\ldots,z_n)$.  
Some related applications of our main results can be found in \cite{AF4}.

\section{Notation}\label{sec: notation} 
We fix here some notation that we will use all along this paper. 
Let $d$ be a positive integer and $\balpha=(\alpha_1,\ldots,\alpha_d)\in (\mathbb C\setminus \{0\})^d$. 
If $T=(t_{i,j})_{1\leq i,j\leq d}$ is a $d\times d$ matrix with 
nonnegative integer coefficients, we let $\rho(T)$ denote its spectral radius. Furthermore, we recall that  
$$
T \balpha=(\alpha_1^{t_{1,1}}\alpha_2^{t_{1,2}}\cdots \alpha_d^{t_{1,d}},\ldots,
\alpha_1^{t_{d,1}}\alpha_2^{t_{d,2}}\cdots \alpha_d^{t_{d,d}})\,
$$ 
and that we let  $T(\x)$  denote the usual matrix product between $T$ and a column vector $\x\in\C^d$.  
If $T_1,\ldots,T_d$ are matrices, we let $T_1\oplus \cdots \oplus T_d$ denote the direct sum of these matrices. 
That is, 
$$
T_1\oplus \cdots \oplus T_d =   \left(\begin{array}{ccc} T_1 & & \\ & \ddots & \\ && T_d \end{array}\right) \, . 
$$
Let  
$\k=(k_1,\ldots,k_d)\in\mathbb Z^d$, then $\balpha^{\k}$ stands for $\alpha_1^{k_1}\cdots \alpha_d^{k_d}$, so that $(T\balpha)^\k  =\balpha^{\k T}$. 
Given a $d$-tuple of natural numbers 
$\k=(k_1,\ldots,k_d)$, we set $\vert \k\vert =k_1+\cdots +k_d$. 
We use the same symbol $\Vert\cdot\Vert$ to denote both the maximum norm of $\C^d$ and the maximum norm of $\M_d(\C)$. We let $\vert \xi\vert$ denote the module of the complex number $\xi$. 

Let $\lambd= (\lambda_1,\ldots,\lambda_d) \in \N^d$ and $\gamm = (\gamma_1,\ldots,\gamma_d) \in \N^d$. We define a partial order on $\N^n$ by setting 
$\lambd \leq \gamm$ if $\lambda_i \leq \gamma_i$, $1\leq i\leq d$. 
We also set  
$$
\binom{\lambd}{\gamm} = \prod_{i=1}^d \binom{\lambda_i}{\gamma_i}\, ,
$$
the product of binomial coefficients associated  with each coordinate of  $\lambd$ and $\gamm$.
Given  a positive integer $h$, a matrix $M=(m_{i,j})_{1 \leq i,j \leq h}$  with coefficients in some ring, and 
a matrix $\bmu=(\mu_{i,j})_{1\leq i,j\leq h}$  with nonnegative integer coefficients, we set 
$$
M^\bmu = \prod_{1\leq i,j,\leq h} m_{i,j}^{\mu_{i,j}}\, .
$$

We use the standard Landau notation $\mathcal O$. We also use the notation $\gg$ as follows.  
Writing that some property holds true for all integers 
$\lambda_1\gg \lambda_2,\lambda_3$ means 
that the corresponding property holds true for all $\lambda_1$ that is sufficiently large w.r.t.\ 
$\lambda_2$ and $\lambda_3$, while writing that some property holds true for all integers 
$\lambda_1\gg \lambda_2\gg \lambda_3$ means 
that the corresponding property holds true for all $\lambda_1$ that is sufficiently large w.r.t.\ $\lambda_2$, 
assuming that $\lambda_2$ is itself sufficiently large w.r.t.\ $\lambda_3$. 

Given a positive real number $R$ and $\balpha \in \C^d$, we let
$$
\mathcal D(\balpha,R)=\{\thet \in \C^d\ : \ \Vert \thet - \balpha \Vert < R\}\, 
$$
denote the  open polydisc of center $\balpha$ and radius $R$.  
By definition, an element $g \in \Q\{\z\}$ has a unique expansion of the form  
$$
g(\z)=\sum_{\lambd \in \N^d} g_\lambd \z^\lambd \, ,
$$
which converges in some neighborhood of the origin. 
The \emph{radius of convergence} of $g$ is defined as the supremum of the positive real numbers $R$ 
such that the power series defining $g$ is convergent on $\mathcal D(\boldsymbol{0},R)$. 
By \cite[Proposition 2.2]{Cartan}, when the radius of convergence of $g$ is finite and equal to $R_0$, the power series defining $g$ is absolutely convergent on $\mathcal D(\boldsymbol{0},R_0)$.  
By specialization, we deduce from the Cauchy-Hadamard theorem that 
if  $g(\z)=\sum_{\lambd \in \N^d} g_\lambd \z^\lambd\in\Q\{\z\}$, then 
\begin{equation}
\label{eq:CauchyHadamard}
\vert g_\lambd \vert = \mathcal O\left(R^{-\vert \lambd \vert }\right)\, ,
\end{equation}
for all positive real numbers $R$ smaller than the radius of convergence of  $g$.

We let $H(\cdot)$ denote the \emph{absolute Weil height} over the projective space $\mathbb P^d(\Q)$.  
Given $\boldsymbol \beta = (\beta_1,\ldots,\beta_d) \in \Q^d$, we also write $H(\boldsymbol \beta)$ instead of $H(\beta_1:\cdots:\beta_d:1)$. We will only use basic properties of the Weil height and we refer the interested reader to \cite[Chapter 3]{Miw} for more details.


\section{Admissibility conditions}\label{sec: admissibility}

A well-known feature of Mahler's method is that, 
independently of the choice of the matrix $A(\z)$ defining the system \eqref{eq:mahler}, 
some unavoidable restrictions on the transformation $T$ and 
on the point $\balpha$ are required.

\begin{defi}\label{def:admissible}
Let $T$ be an $n\times n$ matrix with nonnegative integer coefficients and $\balpha\in(\Q^\star)^n$. 
The pair $(T,\balpha)$ is said to be \emph{admissible} if there exist two real numbers $\rho>1$ and 
$c>0$ such that the following three conditions hold.  

\begin{enumerate}[label=(\alph*)]
\item \label{condition: A} 
The coefficients of the matrix $T^k$ belong to $\mathcal O(\rho^k)$.   

\item \label{condition: B} Set $T^k{\boldsymbol \alpha}:=(\alpha_1^{(k)},\ldots,\alpha_n^{(k)})$. 
Then 
$\log \vert \alpha_i^{(k)}\vert \leq -c\rho^k$, 
for every integer $i$, $1\leq i\leq n$, and all sufficiently large integers $k$.  

\item \label{condition: C} If $f({\z})\in \mathbb C\{{\z}\}$ is nonzero, then there 
are infinitely many integers $k$ such that $f(T^k\boldsymbol \alpha)\not=0$. 
\end{enumerate}
\end{defi}


The strength of our results strongly depends on our ability to 
provide simple and natural conditions 
that imply   
Conditions (a), (b), and (c). The latter are in fact necessary 
to apply Mahler's method (see \cite{Ma29}). 
Though they appear naturally in proofs, 
it is not that easy, at first glance, to see how to check them. 
We provide here a simple characterization of matrices and algebraic points 
satisfying these conditions, gathering results of Kubota \cite{Ku77}, 
Loxton and van der Poorten 
\cite{LvdP77,LvdP77II}, and mainly Masser \cite{Mas82}.

\begin{defi} 
Let $T$ be an $n\times n$ matrix with nonnegative integer coefficients and with spectral radius $\rho(T)$. 
We say that $T$ belongs to the class $\M$ if it satisfies the following three conditions.  

\smallskip

\begin{itemize}
\item[(i)] It is nonsingular. 

\smallskip

\item[(ii)] None of its eigenvalues  are roots of unity.

\smallskip

\item[(iii)] There exists an eigenvector with positive coordinates associated with the eigenvalue $\rho(T)$. 
\end{itemize}
\end{defi}

In particular, if $T$ belongs to the class $\M$, then $\rho(T) > 1$. 

\begin{rem}
Let us consider $r$ Mahler systems associated with transformations $T_1,\ldots,T_r$ in $\M$, all  
having the same spectral radius $\rho$. Then the direct sum $T_1 \oplus  \cdots \oplus T_r$ 
also belongs to the class $\M$.  More generally, given $r$ Mahler 
systems, associated with transformation matrices $T_1,\ldots,T_r\in \M$ with pairwise multiplicatively dependent spectral radius, it is possible to gather them into a larger Mahler system whose transformation matrix 
also belongs to the class $\M$, and then to apply  Theorem \ref{thm: permanence}.  
\end{rem}

Given a one-variable Mahler system associated with a matrix $A(z)$, we can  
consider the same  
system twice but with different variables. 
That is, the system associated with the matrix
$$
\left(\begin{array}{cc} A(z_1) & 0 \\ 0 & A(z_2) \end{array} \right) \,.
$$
This shows that some kind of minimal independence between the coordinates of the point 
$\balpha=(\alpha_1,\alpha_2)$ 
is required in order to apply Mahler's method. Typically, we cannot consider a point 
of the form $(\alpha,\alpha)$ in that case. 

\begin{defi}\label{def: independent}
A point $\balpha\in (\Q^\star)^n$ is  said to be {\it $T$-independent} 
if there is no nonzero $n$-tuple of integers $\bmu$ for which $(T^k\balpha)^\bmu=1$ for all $k$ 
in an arithmetic progression.  
\end{defi} 

\begin{rem}
According to Definition \ref{def: independent}, 
Condition (ii) of Theorem \ref{thm: purete2} is equivalent to the fact that 
the point $\balpha:=(\balpha_1,\ldots,\balpha_r)$ is $T$-independent with respect to the 
direct sum $T=T_1 \oplus  \cdots \oplus T_r$.
\end{rem}

With these definitions, we have the following characterization 
of admissibility, which makes our main results very convenient to apply.

\begin{thm}
\label{thm:masser}
Let $T$ be an $n\times n$ matrix with nonnegative integer coefficients and $\balpha\in (\Q^\star)^n$.  
Then the pair $(T,\balpha)$ is admissible if and only if  
 $T$ belongs to the class $\M$,  $\lim_{k\to \infty}T^k\balpha=\boldsymbol 0$, and $\balpha$ is $T$-independent. 
 \end{thm}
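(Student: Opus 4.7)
The plan is to establish the two implications separately, with the substantive work concentrated on the converse direction, and with Masser's vanishing theorem as the key external input.

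For the ``only if'' direction, assume $(T,\balpha)$ admissible. The convergence $T^k\balpha\to\boldsymbol 0$ is immediate from condition (b). The conditions that $T$ is nonsingular, that $T$ has no root-of-unity eigenvalue, and that $\balpha$ is $T$-independent all succumb to a single template, which I would apply in exactly that order. The failure of each produces a nonzero integer row vector $\k$, an integer $m\geq 1$, a nonnegative integer $k_0$, and a constant $\gamma\in\Q^\star$ such that $(T^{k_0+jm}\balpha)^{\k}=\gamma$ for every $j\geq 0$: in the singular case $\k T=0$ gives $m=1$ and $\gamma=1$; in the root-of-unity case $\k T^m=\k$ gives $\gamma=\balpha^{\k T^{k_0}}$; the failure of $T$-independence is of this form by definition. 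The Laurent polynomial $P(\w)=\w^{\k^+}-\gamma\w^{\k^-}$ then vanishes at the corresponding arithmetic progression of iterates of $\balpha$. Forming the shifted product $Q(\z)=\prod_{l=0}^{m-1}P(T^l\z)$ and using pigeonhole on residues modulo $m$, one obtains $Q(T^k\balpha)=0$ for every $k\geq k_0$. Once nonsingularity of $T$ is in hand, each factor is a nonzero Laurent polynomial, so $Q$ itself is a nonzero Laurent polynomial; clearing denominators produces a nonzero element of $\Q[\z]$ vanishing cofinitely on the orbit, contradicting (c). The positive-eigenvector clause in the definition of $\M$ is then derived from Perron--Frobenius together with a reduction: a zero coordinate in a nonnegative Perron eigenvector would force $T$ into a block-triangular form whose autonomous block can be analyzed by the same template.

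For the ``if'' direction, assume $T\in\M$, $T^k\balpha\to\boldsymbol 0$, and $\balpha$ is $T$-independent. Condition (a) follows from $\|T^k\|^{1/k}\to\rho(T)$ together with $\rho(T)>1$, which is automatic in $\M$: $T$ is nonsingular with integer entries, so $|\det T|\geq 1$; if $\rho(T)\leq 1$ all eigenvalues of $T$ would lie on the unit circle, and Kronecker's theorem would force every one of them to be a root of unity, contradicting $T\in\M$. For condition (b), I would exploit the coordinatewise identity $\log|T^k\balpha|=T^k\log|\balpha|$ together with the Perron structure: letting $\bv>0$ and $\bu>0$ denote the right and left Perron eigenvectors at $\rho(T)$, the dominant term of $T^k\log|\balpha|$ is $c\,\rho(T)^k\bv$ with $c=(\bu\cdot\log|\balpha|)/(\bu\cdot\bv)$; the assumption $T^k\balpha\to\boldsymbol 0$ forces $c<0$, whence $\log|\alpha_i^{(k)}|\sim cv_i\rho(T)^k$ and (b) follows.

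The main obstacle is condition (c). This is precisely Masser's multivariate vanishing theorem from \cite{Mas82}: under the three geometric hypotheses above, no nonzero element of $\C\{\z\}$ can vanish on a cofinite subset of $\{T^k\balpha\}_{k\geq 0}$. I would invoke it as a black box; its proof relies on delicate height and interpolation arguments that go substantially beyond the elementary manipulations used in the rest of the argument.
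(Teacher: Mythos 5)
Your architecture matches the paper's (Masser's theorem as a black box for condition (c), elementary monomial arguments for the rest), and your shifted product $Q(\z)=\prod_{l=0}^{m-1}P(T^l\z)$ is actually a welcome tightening of a step the paper leaves implicit, since (c) only asserts infinitely many nonvanishing iterates and a single vanishing arithmetic progression does not by itself contradict it. But two steps do not go through as written. The more serious one is the positive-eigenvector clause of $\M$ in the forward direction: your template only ever produces a nonzero function vanishing cofinitely on the orbit, i.e.\ it can only contradict condition (c), and the absence of a positive Perron eigenvector is invisible to (c). Take $T=\mathrm{diag}(2,3)$ with $\balpha$ having multiplicatively independent coordinates: $T$ is nonsingular, has no root-of-unity eigenvalue, $\balpha$ is $T$-independent, yet the eigenvectors for $\rho(T)=3$ are the multiples of $(0,1)$. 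What fails here is the quantitative pair (a)+(b): any $\rho$ satisfying (a) must be at least $3$, while $\log\vert\alpha_1^{(k)}\vert=2^k\log\vert\alpha_1\vert$ decays far too slowly for (b). The paper accordingly derives the positive eigenvector from (a) and (b) — (a) forces the characteristic space at $\rho$ to coincide with the eigenspace, and (b) forces the Perron component of $-\log\vert\balpha\vert$ to be positive. Your block-triangular reduction would have to be rerouted through (a)/(b) in this way; it cannot be ``analyzed by the same template.''

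The second gap is in your proof of (b) in the reverse direction, which rests on the asymptotic $\log\vert\alpha_i^{(k)}\vert\sim c\,v_i\,\rho(T)^k$. Class $\M$ does not exclude peripheral eigenvalues $\mu\rho(T)$ with $\mu\neq 1$ a root of unity — e.g.\ $T=\left(\begin{smallmatrix}0&2\\2&0\end{smallmatrix}\right)$ lies in $\M$ with eigenvalues $\pm2$ — and for such $T$ the oscillating peripheral contributions to $T^k\log\vert\balpha\vert$ have the same order $\rho(T)^k$ as the Perron term, so the asymptotic is false (the conclusion survives, but not by this route). The paper avoids spectral asymptotics entirely: after replacing $\balpha$ by $T^{k_0}\balpha$ so that $\x:=-\log\vert\balpha\vert$ has positive coordinates, it picks a positive eigenvector $\bmu\leq\x$ and uses monotonicity of the nonnegative matrix $T^k$ on the positive orthant to get $T^k\x\geq T^k\bmu=\rho(T)^k\bmu$ coordinatewise. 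Relatedly, Gelfand's formula alone gives only $\Vert T^k\Vert=\mathcal O((\rho+\epsilon)^k)$; the sharp bound $\mathcal O(\rho^k)$ needed for (a) again comes from the positive eigenvector (conjugating $T/\rho$ to a row-stochastic matrix), not from $\Vert T^k\Vert^{1/k}\to\rho(T)$.
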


\begin{rem}
There is no difficulty in checking whether or not a matrix belongs to the class $\M$.  
Furthermore, if $\alpha_1,\ldots,\alpha_n\in \mathbb C \setminus\{0\}$ are multiplicatively independent 
complex numbers,  
then $\balpha=(\alpha_1,\ldots,\alpha_n)$ is \emph{a fortiori} $T$-independent.  
\end{rem}


\begin{proof}[Proof of Theorem \ref{thm:masser}] 
We first prove the reverse direction. 
Let us assume that $T$ belongs to $\M$, $\balpha\in(\Q^\star)^n$ is $T$-independent, and 
that $\lim_{k\to\infty}T^k\balpha=0$. Then there exists $k_0$ such 
that $\Vert T^{k_0}\balpha\Vert< 1$. We observe that if the pair $(T,T^{k_0}\balpha)$ is admissible, 
then so is the pair $(T,\balpha)$. Thus, we can assume without any loss of generality that 
$\Vert \balpha\Vert< 1.$ We let $\x$ 
denote the transpose of the vector 
$$
 \left( - \log |\alpha_{1}|,\ldots,-\log |\alpha_{n}| \right) \, ,
$$
whose coordinates are all positive. By assumption, $T$ has a positive eigenvector associated with the eigenvalue $\rho(T)$. 
Let us choose such an eigenvector $\bmu$ whose coordinates are all smaller than those of $\x$. 
We have
$$
- \log \Vert T^{k}\balpha\Vert  = \Vert  T^{k}(\x) \Vert  = \Vert T^{k}(\bmu) 
+ T^{k}(\x-\bmu)\Vert  > \Vert  T^{k}(\bmu)\Vert  = \rho(T)^{k}\Vert \bmu\Vert  \, ,
$$
for all $k\in \mathbb N$, because $T^k(\x-\bmu)$ has positive coordinates. Condition (b) is thus satisfied with $\rho = \rho(T)$. Following Loxton and van der Poorten \cite{LvdP77},  Condition (a) is satisfied for 
the matrix $T$ belongs to $\M$.  
Finally, Masser vanishing theorem \cite{Mas82} implies that Condition (c) holds since $\balpha$ is $T$-independent. Hence, the 
pair $(T,\balpha)$ is admissible.  

Now, we prove the forward direction. Let $(T,\balpha)$ be an admissible pair. We first note that Condition (b)  
implies that $\lim_{k\to \infty} T^k\balpha = 0$. 

Replacing $\balpha$ by $T^k\balpha$ for some $k$ if 
necessary, we can thus assume without any loss of generality that $\Vert \balpha \Vert < 1$. By \cite{Ku77}, 
Condition (c) 
implies that the matrix $T$ is nonsingular and that none of its eigenvalues is a root of unity. Hence 
its spectral radius $\rho(T)$ is larger than $1$. 
Let us prove that there exists an eigenvector with positive coordinates associated with the eigenvalue 
$\rho(T)$.  
Let $\rho$ be as in Definition \ref{def:admissible}.  We first infer from Conditions (a) and (b) that 
$\rho = \rho(T)$. Since the coefficients of $T$ are nonnegative integers, for every eigenvalue  $\rho'$ 
with $\vert \rho'\vert =\rho$, there is a root of unity $\mu$ such that $\rho'=\mu \rho$ (see, for instance, \cite[Theorem 2, p.\ 65 \& Chapter III \textsection 4]{Grantmacher}). Replacing $T$ by some power if necessary, we can assume 
that $\rho$ is larger than every other eigenvalue of $T$. 
Let $E_\rho$ denote the eigenspace associated with $\rho$. Condition (a) implies that the characteristic space associated with $\rho$ is equal to $E_\rho$, since otherwise the sequence $T^k/\rho^k$ would not be bounded.  Hence $E_\rho$ has a $T$-invariant complement, say $E_\rho^\perp$. 
From Condition (b), we infer  the existence of a positive real number $\gamma$ such that every coordinate of $T^k(\x)$ is larger than $\gamma \rho^k$. Set 
$$
\x = \boldsymbol e + \boldsymbol e^\perp,\, \text{ where } \boldsymbol e \in E_\rho \text{ and } \boldsymbol e^\perp \in E_\rho^\perp\,.
$$
Suppose that there is no vector in $E_\rho$ with positive coordinates. Then, for some $j$, the $j$th 
coordinate of $\boldsymbol e$ is nonpositive. Since $T^k(\x)=\rho^k \boldsymbol e + T^k(\boldsymbol e^\perp)$, 
we deduce that  the $j$th coordinate of $T^k(\boldsymbol e^\perp)$ is larger than $\gamma \rho^k$. 
Since all eigenvalues of $T$ on $E_\rho^\perp$ are smaller than $\rho$, we obtain a contradiction. 
This shows that $T$ belongs to the class $\mathcal M$.

 Finally, $\balpha$ is $T$-independent. Indeed, 
otherwise there would exist two tuples of nonnegative integers $(s_1,\ldots, s_n)$ and $(t_1,\ldots, t_n)$, not both zero, such that $P(T^k\balpha)=0$ for infinitely many $k$, where $P(\z)= z_1^{s_1}\cdots z_n^{s_n}$ - $z_1^{t_1}\cdots z_n^{t_n}$, providing a contradiction with Condition (c). 
\end{proof}


\section{A new vanishing theorem}\label{sec: vanishing}

As already mentioned, it is of great importance to find natural conditions that ensure   
nonvanishing properties similar to Condition (c) in Definition \ref{def:admissible}.  
Of course, our goal is to obtain a \emph{vanishing theorem} 
that can be applied to transformation matrices and points which are as general as possible. 
Our contribution to this problem is Theorem \ref{thm:lemmedezero}.

In the framework of Mahler's method, several vanishing theorems have been formulated 
by saying that a nonzero multivariate power series cannot vanish  
at all points in some well-structured large sets. The latter are obtained by iteration of the transformation 
matrix and usually involve arithmetic progressions. In order to prove our main theorems, we need 
to replace these \emph{well-structured sets} by sets which remain large but offer much more flexibility.  
We use the notion of  piecewise syndetic set, which is classical in Ramsey theory,  
especially in its ergodic counterpart. As we just said, it can be though of as a notion of largeness 
for subsets of $\mathbb N$. Furthermore, Brown's lemma (see (ii) in Lemma \ref{lem:syndetique}) 
shows that such sets are partition regular, and thus much more robust in terms of partitions 
than arithmetic progressions. 

\begin{defi}\label{def:full}
A set $\mZ\subset\N$ is said to be \emph{piecewise syndetic} if there exists a natural number 
$B\geq 1$  such that  for any given integer $C\geq 2$  there exist $l_1 < \cdots < l_C$ in $\mZ$ 
such that
$$
l_{i+1} - l_i \leq B, \qquad 1 \leq i < C\, .
$$
In this case, we say that $B$ is a {\it bound}  for $\mZ$. A set $\mZ\subset \N$ is said to be 
\emph{negligible} if it is not piecewise syndetic, while it is said to be \emph{full} if its complement is negligible. 
\end{defi}

Let us recall that a subset of $\mathbb N$ is  said to be {\it syndetic}, or sometimes 
\emph{relatively dense}, if it has bounded gaps. A subset of $\mathbb N$ is said to be \emph{thick} 
if it contains arbitrarily long intervals. Thus piecewise syndetic sets are those that 
can be obtained as the intersection of  a syndetic set and a thick set. In the rest of this section, 
as well as all along Section \ref{sec: matrix}, we will use heavily the following results. 

\begin{lem}\label{lem:syndetique}
Let $\mZ \subset \N$ be a piecewise syndetic set with bound $B$. 
Then the following properties hold.  

\begin{enumerate}[label=(\roman*)]
\item[{\rm (i)}] If $\mZ \subset \mZ' \subset \N$, then $\mZ'$ is also piecewise syndetic.
\item[{\rm (ii)}] If $\mZ \subset \cup_{i=1}^s \mZ_i$, then at least one of the sets $\mZ_i$  
is piecewise syndetic.
\item[{\rm (iii)}] Let $l_0$ be a natural number. The set 
$$
\mZ_0:=\left\{l \in \mathcal Z : (l+\{l_0,\ldots,l_0+B\}) \cap \mZ \neq \emptyset \right\}
$$
is piecewise syndetic.
\item[{\rm (iv)}] The set $\mZ$ contains arbitrarily long arithmetic progressions. 
\end{enumerate} 
\end{lem}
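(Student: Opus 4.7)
The plan is to dispatch the four items separately.

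Part (i) is immediate from the definition: any witness configuration $l_1<\cdots<l_C$ in $\mZ$ with gaps $\leq B$ is equally a configuration in $\mZ'$, so the same bound $B$ transfers.

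For (iii) I would fix $m=\max(l_0,1)$; given $C$, pick $l_1<\cdots<l_C$ in $\mZ$ with consecutive gaps $\leq B$. For each $i\leq C-m$, monotonicity forces $l_{i+m}\geq l_i+m\geq l_i+l_0$, so there is a smallest $j\geq i$ with $l_j\geq l_i+l_0$. Either $j=i$ (forcing $l_0=0$) or $l_{j-1}<l_i+l_0$ and $l_j\leq l_{j-1}+B<l_i+l_0+B$, so in every case $l_j\in(l_i+\{l_0,\dots,l_0+B\})\cap \mZ$ and hence $l_i\in \mZ_0$. This exhibits $C-m$ elements of $\mZ_0$ with consecutive gaps $\leq B$; letting $C$ grow shows $\mZ_0$ is piecewise syndetic with the same bound $B$.

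For (iv) I would use the equivalent reformulation of piecewise syndeticity: $\mZ$ has bound $B$ if and only if $\mZ+\{0,\dots,B\}$ is \emph{thick}, i.e., contains arbitrarily long intervals of $\N$. The forward direction is immediate from the identity $\bigcup_{i=1}^C[l_i,l_i+B]=[l_1,l_C+B]$ valid when $l_{i+1}-l_i\leq B$, and the converse is proved by extracting a sparse subsequence from a long interval in the thickening. Given a prescribed length $k$, let $W$ be the van der Waerden number $W(B+1,k)$, and choose an interval $[a,a+W-1]\subset \mZ+\{0,\dots,B\}$. For each integer $p$ in that interval, select $b_p\in\{0,\dots,B\}$ with $p-b_p\in \mZ$. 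Van der Waerden's theorem applied to this $(B+1)$-coloring yields a monochromatic arithmetic progression $p_1,p_1+d,\dots,p_1+(k-1)d$ with $b_{p_i}=b^{*}$ for every $i$; then $p_1-b^{*},\,p_1+d-b^{*},\dots,p_1+(k-1)d-b^{*}$ is an arithmetic progression of length $k$ contained in $\mZ$.

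Item (ii), Brown's lemma, is the only delicate piece and the main obstacle. I would first intersect each $\mZ_i$ with $\mZ$ (using (i)) to reduce to a disjoint partition $\mZ=\bigsqcup_i \mZ_i$, then appeal to the thickness characterization above so that $\bigcup_i(\mZ_i+\{0,\dots,B\})$ is thick; an iterated Ramsey-type argument in the spirit of (iv) then forces some $\mZ_i+\{0,\dots,B'\}$ to be thick, whence $\mZ_i$ piecewise syndetic. Alternatively, one simply invokes the partition regularity of piecewise syndetic sets as a classical result from combinatorial Ramsey theory. Once this step is granted, the remaining three items follow readily from the thickness reformulation.
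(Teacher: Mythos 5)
Your proposal is correct, and for items (i)--(iii) it essentially coincides with what the paper does: (i) is immediate in both; (ii) is in both cases delegated to Brown's lemma (the paper simply cites \cite{Brown} --- your ``iterated Ramsey-type argument'' sketch is too vague to stand on its own, but your fallback of invoking partition regularity of piecewise syndetic sets as a classical fact is exactly the paper's move); and for (iii) your argument --- locating, for each $l_i$ in a long low-gap configuration, the first index $j$ with $l_j\geq l_i+l_0$ and using minimality to get $l_j< l_i+l_0+B$ --- is the same counting idea as the paper's, which instead takes a configuration of length $C+aB$ with $aB\geq l_0$ and finds the witness among $l_{i+1},\dots,l_{i+aB}$; both correctly keep the bound $B$. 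The one place you genuinely diverge is (iv): the paper disposes of it by citing Szemer\'edi's theorem (legitimate, since a piecewise syndetic set with bound $B$ has positive upper Banach density), whereas you reformulate piecewise syndeticity as thickness of $\mZ+\{0,\dots,B\}$ (the forward implication, which is all you need here, is indeed immediate from $\bigcup_{i}[l_i,l_i+B]=[l_1,l_C+B]$) and then run the classical van der Waerden argument: colour each point $p$ of a long interval in the thickening by a shift $b_p\in\{0,\dots,B\}$ with $p-b_p\in\mZ$, extract a monochromatic arithmetic progression, and translate it back into $\mZ$. Your route is the standard elementary proof and replaces a deep density theorem by a purely combinatorial one; the paper's citation is shorter but invokes much heavier machinery. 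Both are valid.
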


\begin{proof}
The point (i) immediately follows from the definition, while points (ii) and (iv)  correspond to 
classical results respectively known as  Brown's lemma (see \cite{Brown}) and Szemer\'edi's 
theorem \cite{Sz}. Let us prove (iii). Let $l_0$ and $C$ be two natural numbers and let $a$ 
be the smallest integer such that $aB \geq l_0$. 
Since $\mZ$ is piecewise syndetic, there exists a sequence $l_1 < l_2 < \cdots < l_{C+aB}$ 
of integers in $\mZ$ such that $l_{i+1}-l_i < B$. Let $i\in \{1,\ldots,C\}$. 
Then, $l_i+l_0\leq l_i+aB \leq l_{i+aB}$. 
There thus exists an integer $j \leq aB$ such that $l_i+l_0 \leq l_{i+j} \leq l_i+l_0+B$. 
Hence 
$l_i \in \mZ_0$. Thus,  $l_1,\ldots,l_C$ 
all belong to the set $\mZ_0$, which proves that this set is piecewise syndetic.  
\end{proof}

Part of Lemma \ref{lem:syndetique} can be naturally rephrased as follows. 

\begin{lem}\label{lem:syndetique2}
The following properties hold.  

\begin{enumerate}[label=(\roman*)]
\item[{\rm (i)}] A finite union of negligible sets is negligible. 
\item[{\rm (ii)}] A finite intersection of full sets is full. 
\item[{\rm (iii)}] If $\mZ_1$ is full and $\mZ_2$ is negligible, then $\mZ_1\setminus \mZ_2$ is full. 
\end{enumerate} 
\end{lem}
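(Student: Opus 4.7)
The plan is to derive all three statements directly from Lemma \ref{lem:syndetique}, chiefly from its part (ii), which is Brown's lemma. The key observation is that \emph{full} and \emph{negligible} are complementary notions with respect to $\mathbb N$, so De Morgan's laws should translate each claim into a statement about piecewise syndetic sets.

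For (i), I would argue by contraposition: let $\mZ_1,\ldots,\mZ_s$ be subsets of $\mathbb N$ and assume $\mZ:=\mZ_1\cup\cdots\cup\mZ_s$ is piecewise syndetic. Then Brown's lemma, which is exactly part (ii) of Lemma \ref{lem:syndetique}, ensures that at least one of the $\mZ_i$ is piecewise syndetic. Hence if all the $\mZ_i$ are negligible, so is their union.

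For (ii), the point is that $\mathbb N\setminus(\mZ_1\cap\cdots\cap\mZ_s)=(\mathbb N\setminus\mZ_1)\cup\cdots\cup(\mathbb N\setminus\mZ_s)$. Each $\mathbb N\setminus\mZ_i$ is negligible by definition of fullness, and applying (i) to these complements shows that the right-hand side is negligible, which means that $\mZ_1\cap\cdots\cap\mZ_s$ is full. For (iii), I would write $\mathbb N\setminus(\mZ_1\setminus\mZ_2)=(\mathbb N\setminus\mZ_1)\cup\mZ_2$, observe that both sets on the right are negligible by hypothesis, and again invoke (i) to conclude.

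There is no genuine obstacle here: the lemma is a purely set-theoretic packaging of the Ramsey-theoretic content already contained in Brown's lemma. The only thing to watch is not to invoke (ii) or (iii) in the proof of (i), since the reductions in (ii) and (iii) are the ones that rely on (i). Taking the three parts in the order (i) $\Rightarrow$ (ii) $\Rightarrow$ (iii) gives a clean linear argument of a few lines.
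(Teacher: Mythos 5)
Your proposal is correct and follows essentially the same route as the paper: (i) is exactly Brown's lemma (part (ii) of Lemma \ref{lem:syndetique}) applied to the union, and (ii) and (iii) are the same De Morgan reductions to (i) used in the paper's proof. Nothing further is needed.
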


\begin{proof}
Point (i) follows directly from Point (ii) of Lemma \ref{lem:syndetique}.  Let $\mZ_1,\ldots,\mZ_r$ be full sets. 
By (i), we obtain that $(\cap \mZ_i)^c=\cup (\mZ_i^c)$ is negligible, which proves (ii). 
Let us prove (iii). By assumption, $\mZ_1^c$ is negligible. By (i), the set $\mZ_2 \cup \mZ_1^c$ is also negligible. Since $\mZ_2 \cup \mZ_1^c=(\mZ_1 \setminus \mZ_2)^c$, we obtain that  
$\mZ_1\setminus \mZ_2$ is full, as wanted.  
\end{proof}

We are now ready to state our vanishing theorem. 

\begin{thm}
\label{thm:lemmedezero}
Let $T_1,\ldots,T_r$ be matrices in the class $\M$ whose spectral radii $\rho(T_1),\ldots,\rho(T_r)$ 
are pairwise multiplicatively independent.   
Let $n_i$ denote the size of the matrix $T_i$ and set $N:=\sum_{i=1}^r n_i$.  
Set 
\begin{equation}\label{eq:Theta}
\Theta:=\left( \frac{1}{\log \rho(T_1)},\ldots,\frac{1}{\log \rho(T_r)}\right) \, .
\end{equation}
For every $l \in \mathbb N$, we let 
$\k_l:=(k_{l,1},\ldots,k_{l,r})$ denote a $r$-tuple of positive integers. Let us assume that  
\begin{equation}\label{eq:bounded}
\Vert \k_l -l \Theta \Vert = \mathcal O(1) \, .
\end{equation}
Let $\balpha:=(\balpha_1,\ldots,\balpha_r)\in(\Q^\star)^N$ be
such that the pair $(T_i,\balpha_i)$ is admissible for every $i$, and   
let  
$g(\z) \in \Q\{\z\}$ be nonzero. Then the set
$$
\left\{ l \in \mathbb N : g(T_1^{k_{l,1}}\balpha_1,\ldots,T_r^{k_{l,r}}\balpha_r) = 0 \right\}
$$
is negligible. 
\end{thm}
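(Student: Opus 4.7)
The plan is to argue by contradiction. Suppose that
$$\mZ := \{l \in \mathbb N : g(\bbeta_l) = 0\}, \qquad \bbeta_l := (T_1^{k_{l,1}}\balpha_1, \ldots, T_r^{k_{l,r}}\balpha_r),$$
is piecewise syndetic. The overall strategy is to combine the Corvaja--Zannier vanishing theorem at $\mathcal S$-unit points \cite[Theorem 3]{CZ05} with Brown's lemma (Lemma \ref{lem:syndetique}(ii)) to reduce to a single nontrivial multiplicative relation on the $\bbeta_l$ holding along a piecewise syndetic set, and then to derive a contradiction from the pairwise multiplicative independence of the $\rho(T_i)$ together with the $T_i$-independence of each $\balpha_i$ guaranteed by admissibility (Theorem \ref{thm:masser}).

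The first task is to check that the sequence $(\bbeta_l)$ fits the framework of \cite[Theorem 3]{CZ05}. All coordinates of $\bbeta_l$ lie in the finitely generated subgroup of $\Q^\star$ generated by the coordinates of $\balpha_1, \ldots, \balpha_r$, so they are $\mathcal S$-units for a suitable finite set $\mathcal S$ of places of a number field. The admissibility condition \ref{condition: B} of Definition \ref{def:admissible}, combined with the hypothesis $\Vert \k_l - l\Theta\Vert = \mathcal O(1)$, gives $\Vert \bbeta_l\Vert \leq \exp(-c\,e^l)$ for some $c > 0$; condition \ref{condition: A} yields that the entries of $T_i^{k_{l,i}}$ are $\mathcal O(\rho(T_i)^{k_{l,i}}) = \mathcal O(e^l)$, whence $H(\bbeta_l) \leq \exp(\mathcal O(e^l))$. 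This balance between norm decay and height growth is precisely what is needed to apply the Corvaja--Zannier theorem.

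Applied in this setting, the theorem supplies finitely many nonzero vectors $\bmu^{(1)}, \ldots, \bmu^{(M)} \in \mathbb Z^N$ and constants $\xi^{(1)}, \ldots, \xi^{(M)} \in \Q^\star$ such that every sufficiently large $l \in \mZ$ satisfies $\bbeta_l^{\bmu^{(j)}} = \xi^{(j)}$ for some index $j$. By Brown's lemma one of these relations, say $(\bmu, \xi)$, holds on a piecewise syndetic subset $\mZ' \subseteq \mZ$. Writing $\bmu = (\bmu_1, \ldots, \bmu_r)$ with $\bmu_i \in \mathbb Z^{n_i}$ and using the identity $(T\balpha)^{\bmu} = \balpha^{\bmu T}$ from Section \ref{sec: notation}, this rewrites as
$$\prod_{i=1}^r \balpha_i^{\bmu_i T_i^{k_{l,i}}} = \xi, \qquad l \in \mZ'.$$

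The hard part, and where the pairwise multiplicative independence of the $\rho(T_i)$ enters essentially, is to extract a contradiction from this identity. Taking the quotient for $l, l' \in \mZ'$ gives $\prod_i \balpha_i^{\bmu_i(T_i^{k_{l,i}} - T_i^{k_{l',i}})} = 1$. Expressing the $\balpha_i$ on a fixed $\mathbb Z$-basis of the finitely generated subgroup of $\Q^\star$ they generate modulo torsion decomposes this multiplicative identity into a finite family of scalar equations
$$\sum_{i=1}^r p_{i,s}(k_{l,i}) = \sum_{i=1}^r p_{i,s}(k_{l',i}),$$
where each $p_{i,s}$ is a generalised power sum whose dominant term is of order $\rho(T_i)^{k_{l,i}} \asymp e^l$. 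Along an arithmetic progression inside $\mZ'$, provided by Szemer\'edi's theorem (Lemma \ref{lem:syndetique}(iv)), one can then use the rational incommensurability of the growth rates $\log \rho(T_i)$ to decouple the contributions of different indices $i$, reducing the problem to $r$ separate relations of the form $\balpha_i^{\bmu_i(T_i^{k+d}-T_i^k)} = 1$ holding on an infinite set of $k$. The $T_i$-independence of $\balpha_i$ (Definition \ref{def: independent} and Theorem \ref{thm:masser}) then forces $\bmu_i = \boldsymbol 0$ for every $i$, contradicting $\bmu \neq \boldsymbol 0$ and closing the argument.
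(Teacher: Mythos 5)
The first half of your argument --- verifying the hypotheses of the Corvaja--Zannier theorem, invoking Brown's lemma to isolate a single relation $(T_1^{k_{l,1}}\balpha_1,\ldots,T_r^{k_{l,r}}\balpha_r)^{\bmu}=\xi$ on a piecewise syndetic set, and quotienting two such relations to eliminate the constant $\xi$ --- is exactly the paper's route (the quotienting is Lemma \ref{lem:toresMahler}). The gap is in your final ``decoupling'' step, which is precisely where the difficulty of the theorem is concentrated. You propose to separate the contributions of the different indices $i$ in $\sum_i p_{i,s}(k_{l,i})$ by ``the rational incommensurability of the growth rates $\log\rho(T_i)$''. But by the very hypothesis \eqref{eq:bounded}, the quantities $\rho(T_i)^{k_{l,i}}$ are all comparable to $e^l$ --- the iteration vector $\k_l$ is chosen specifically so that the $r$ blocks contract at the same rate --- so all $r$ generalised power sums $p_{i,s}(k_{l,i})$ have dominant terms of the same order of magnitude, and nothing prevents them from cancelling against one another. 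An order-of-growth comparison therefore cannot isolate the individual blocks.

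The paper's Lemma \ref{lem:admislocglob} achieves the decoupling by a different mechanism. Writing the relation as $\langle\bmu, T_{\k_l}(\x)\rangle=0$ for the vector $\x$ of logarithms, one considers the minimal $\mathbb Q$-rational subspace $U$ containing the $T_{\k_l}(\x)$ for $l$ in a suitable piecewise syndetic set, and shows (via a minimality argument and Lemma \ref{lem:syndetique}) that $U$ is invariant under some $T_{\be}$ with $\be=\be(l,m)=(e_1,\ldots,e_r)$ and $m$ large. The pairwise multiplicative independence of the $\rho(T_i)$ is used to prove that for such $\be$ one has $\lambda_i^{e_i}\neq\lambda_j^{e_j}$ for eigenvalues of distinct blocks --- otherwise $\log\rho(T_i)/\log\rho(T_j)$ would be rational --- and this spectral separation forces every $T_\be$-invariant subspace, in particular $U$, to split as a direct sum of its projections onto the blocks. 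Only then does one obtain a pure relation $(T_i^{e_i k}\balpha_i)^{\bnu_0}=1$ for a single index $i$; and even there the paper needs Szemer\'edi's theorem together with a subspace-stabilisation argument to upgrade a relation on a long arithmetic progression of exponents to one holding for all $k$ in an arithmetic progression, which is what $T_i$-independence actually requires (your ``infinite set of $k$'' is not enough as stated). Your outline names the right ingredients, but it omits the mechanism that actually makes the multiplicative independence of the spectral radii bite.
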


 Applying Mahler's method to several Mahler systems requires some uniform speed of 
 convergence to the origin for the orbits of each algebraic point $\balpha_i$ under the matrix 
 transformations $T_i$.  As noticed by van der Poorten \cite{vdP87}, one way to overcome 
 this difficulty is to iterate  each transformation $T_i$  $k_i$-times, and to choose the iteration vector 
 $\k =(k_1,\ldots,k_r)$ so that asymptotically the matrices $T_i^{k_i}$ have essentially the same radius of convergence. As explained by Lemma \ref{lem:uniformiteconvergencefamilles}, 
 this forces us to consider only iteration 
 vectors $\k$ that remain at bounded distance from the real line $\mathbb R\Theta$, where $\Theta$ 
 is defined by \eqref{eq:Theta}. This explains why the assumption \eqref{eq:bounded} 
 is natural in this framework.  In the rest of this section, we set 
 $$
T_{\k} :=  T_1^{k_{1}} \oplus \cdots \oplus T_r^{k_{r}} \; \mbox{ and }\; T_{\k}\balpha:=(T_1^{k_{1}}\balpha_1,\ldots,T_r^{k_{r}}\balpha_r).
$$

 \begin{lem}\label{lem:uniformiteconvergencefamilles}
Let $T_1,\ldots,T_r$, $\balpha_1,\ldots,\balpha_r$ be as in Theorem \ref{thm:lemmedezero} 
and let $(\k_l)_{l \in \N}$ be an arbitrary sequence with values in  $\N^r$. 
Then the following properties are equivalent.
\begin{itemize}
\item There exist a real numbers $\rho>1$ and  $c>0$ such that
$$
\left\Vert T_{\k_l} \right\Vert = \mathcal O(\rho^l)\, \mbox{ and } \log \left\Vert T_{\k_l}\balpha\right\Vert \leq -c\rho^l\, \mbox{ for all large enough $l$.}
$$
\item There exists a real number $\lambda$ such that $\Vert \k_l - \lambda l \Theta \Vert = \mathcal O(1)$, 
where $\Theta$ is defined as in \eqref{eq:Theta}.
In that case, one has $\rho = e^\lambda$. In particular, if $\lambda=1$, then $\rho = e$.
\end{itemize}
\end{lem}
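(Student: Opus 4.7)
The plan is to reduce the equivalence to two-sided asymptotics for each block of the direct sum $T_{\k_l}=T_1^{k_{l,1}}\oplus\cdots\oplus T_r^{k_{l,r}}$. Concretely, I aim to show that for each $i$ there exist positive constants depending only on $T_i$ and $\balpha_i$ such that, for all sufficiently large $k$,
$$
\|T_i^k\|\asymp\rho(T_i)^k\qquad\text{and}\qquad -\log\|T_i^k\balpha_i\|\asymp\rho(T_i)^k.
$$
Once these are at hand, the equivalence follows via the identities $\|T_{\k_l}\|=\max_i\|T_i^{k_{l,i}}\|$ and $\log\|T_{\k_l}\balpha\|=\max_i\log\|T_i^{k_{l,i}}\balpha_i\|$, together with the bijection $\lambda\leftrightarrow\rho=e^\lambda$.

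The first asymptotic is immediate: Condition (a) of admissibility gives the upper bound, while the lower bound $\|T_i^k\|\gg\rho(T_i)^k$ follows from the existence of a positive eigenvector $\bv$ for $\rho(T_i)$, which is guaranteed by $T_i\in\mathcal M$; indeed, $T_i^k\bv=\rho(T_i)^k\bv$ forces some entry of $T_i^k$ to be $\gg\rho(T_i)^k$. For the second asymptotic, after replacing $\balpha_i$ by $T_i^{k_0}\balpha_i$ for some fixed $k_0$ we may assume $|\alpha_{i,s}|<1$ for every $s$, so that the vector $\x_i:=(-\log|\alpha_{i,1}|,\ldots,-\log|\alpha_{i,n_i}|)^\top$ has strictly positive coordinates and $-\log\|T_i^k\balpha_i\|=\|T_i^k\x_i\|_\infty$. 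The upper bound $\|T_i^k\x_i\|_\infty\ll\rho(T_i)^k$ then follows at once from the matrix-norm estimate, and the lower bound is obtained exactly as in the proof of Theorem \ref{thm:masser}: choose a positive eigenvector $\bv$ coordinatewise smaller than $\x_i$, so that $T_i^k\x_i\geq T_i^k\bv=\rho(T_i)^k\bv$ coordinatewise, yielding $\|T_i^k\x_i\|_\infty\geq\rho(T_i)^k\|\bv\|_\infty$.

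For the implication second $\Rightarrow$ first, if $\|\k_l-\lambda l\Theta\|=\mathcal O(1)$ then $k_{l,i}\log\rho(T_i)=\lambda l+\mathcal O(1)$, hence $\rho(T_i)^{k_{l,i}}=\Theta(e^{\lambda l})$ uniformly in $i$. Feeding this into the block asymptotics yields $\|T_{\k_l}\|=\mathcal O(e^{\lambda l})$ and $\log\|T_{\k_l}\balpha\|\leq -c\,e^{\lambda l}$, i.e., the first condition holds with $\rho=e^\lambda$. For the converse, set $\lambda:=\log\rho$. The bound $\|T_{\k_l}\|=\mathcal O(\rho^l)$ combined with $\|T_{\k_l}\|\geq\|T_i^{k_{l,i}}\|\gg\rho(T_i)^{k_{l,i}}$ yields $k_{l,i}\log\rho(T_i)\leq\lambda l+\mathcal O(1)$; dually, $\log\|T_{\k_l}\balpha\|\leq -c\rho^l$ forces each $-\log\|T_i^{k_{l,i}}\balpha_i\|\geq c\rho^l$, which combined with $-\log\|T_i^{k_{l,i}}\balpha_i\|\ll\rho(T_i)^{k_{l,i}}$ gives $k_{l,i}\log\rho(T_i)\geq\lambda l-\mathcal O(1)$. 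Together, $k_{l,i}=\lambda l/\log\rho(T_i)+\mathcal O(1)$, i.e., the second condition holds with this $\lambda$.

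The only nontrivial step in this plan is the calibration of the lower bound $-\log\|T_i^k\balpha_i\|\gg\rho(T_i)^k$, which genuinely requires the positive-eigenvector structure built into the definition of the class $\mathcal M$; everything else is a matter of matching exponents between the two conditions.
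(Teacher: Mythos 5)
Your proof is correct and follows essentially the same route as the paper's: both directions reduce to two-sided comparisons $\|T_i^k\|\asymp\rho(T_i)^k$ and $-\log\|T_i^k\balpha_i\|\asymp\rho(T_i)^k$, where the upper bounds come from Condition (a) of admissibility and the lower bounds from the positive Perron eigenvector guaranteed by membership in the class $\M$; the paper merely leaves these two-sided estimates more implicit before matching exponents exactly as you do. One small inaccuracy: after normalizing so that every $|\alpha_{i,s}|<1$, one has $-\log\|T_i^k\balpha_i\|=\min_j (T_i^k\x_i)_j$ (the \emph{minimum} coordinate, since $\|\cdot\|$ is the max norm on the point $T_i^k\balpha_i$), not $\|T_i^k\x_i\|_\infty$; this is harmless here because your lower bound $T_i^k\x_i\geq T_i^k\bv=\rho(T_i)^k\bv$ holds coordinatewise and your upper bound on the max coordinate a fortiori bounds the minimum, so both asymptotics survive unchanged.
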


\begin{proof}
Let us assume that there exists a real number $\lambda$ such that 
$\Vert \k_l - \lambda l \Theta \Vert = \mathcal O(1)$.
There thus exists a positive real number $B$, such that, for every $l\in \N$ and $i$, $1\leq i \leq r$, we have 
$k_{i,l} =\lambda l/\log(\rho(T_i)) + \epsilon(i,l)$, for some  real number $\epsilon(i,l)$ with 
$\vert \epsilon(i,l) \vert \leq B$. 
By Conditions (a) and (b) in Definition 
\ref{def:admissible}, we  have, on the one hand, that 
$$
\left\Vert T_i^{k_{i,l}} \right\Vert = \mathcal O\left(\rho(T_i)^{k_{i,l}}\right) = 
\mathcal O\left(\rho(T_i)^{\lambda l/\log(\rho(T_i))}\right) = \mathcal O\left(e^{\lambda l}\right)\, ,
$$
while, on the other hand,  
$$
\log \left\Vert  T_i^{k_{i,l}}\balpha_i \right\Vert \leq -c_i\rho(T_i)^{k_{i,l}} \leq -c'_ie^{\lambda l}\, ,
$$
for all large enough $l$, where $c_i$ and $c'_i$ are positive real numbers. Setting $c:=\min_i\{c'_1,\ldots,c'_r\}$, 
we obtain the desired estimate.

\medskip Now, let us assume that 
$$
\left\Vert T_{\k_l} \right\Vert = \mathcal O(\rho^l)\, \mbox{ and } 
\log \left\Vert T_{\k_l}\balpha\right\Vert \leq -c\rho^l \, \mbox{ for all large enough $l$.} 
$$
Set $\lambda := \log(\rho)$ and let $i$ be an integer with $1\leq i \leq r$. 
Since the pair $(T_i,\balpha_i)$ satisfies Conditions (a) and (b) of Definition \ref{def:admissible}, 
we have 
$$
\left\Vert T_i^{k_{i,l}} \right\Vert= \mathcal O\left(\rho(T_i)^{k_{i,l}}\right)\, \mbox{ and }\log \left\Vert T_i^{k_{i,l}}\balpha_i \right\Vert \leq -c_i\rho(T_i)^{k_{i,l}} \, ,
$$
for some positive real number $c_i$ and all large enough $l$. There thus exist two positive real number $\kappa_i$ and $\gamma_i$ 
such that 
$$
\kappa_i e^{\lambda l} \leq \rho(T_i)^{k_{i,l}} \leq \gamma_i e^{\lambda l} \, ,
$$
for all $l \in \N$. Taking the logarithm, it follows that 
$$
\log(\kappa_i) + \lambda l \leq \log(\rho(T_i)) k_{i,l} \leq \log(\gamma_i) + \lambda l\, .
$$
Dividing by $\log(\rho(T_i))$, we see that $k_{i,l}$ remains at bounded distance from 
$\lambda l /\log(\rho(T_i))$. This ends the proof. 
\end{proof}
 
 Before proving Theorem \ref{thm:lemmedezero}, we 
 need the two following auxiliary results.
The proof of Theorem \ref{thm:lemmedezero} is based on a vanishing theorem due to 
Corvaja and Zannier \cite[Theorem 3]{CZ05}. The latter states that if the set of zeros of a multivariate   
analytic function with algebraic coefficients  contains an infinite sequence of $\mathcal S$-unit points whose height does not grow too fast, then these points all belong to 
a finite number of translates of tori. The goal of the following two lemmas is to show that most points of the form $T_{\k_l}\balpha$ , $l\in \N$, avoid these tori.

\begin{lem}
\label{lem:admislocglob}
We continue with the assumptions of Theorem  \ref{thm:lemmedezero}. 
Then, 
for every nonzero integer $N$-tuple $\bmu$, the set  
$$
\mZ := \left\{ l \in \N : (T_{\k_l}\balpha)^{\bmu} = 1 \right\}\, 
$$
is negligible.
\end{lem}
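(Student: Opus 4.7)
The plan is to argue by contradiction: suppose $\mZ$ is piecewise syndetic with bound $B$, and derive a violation of the $T_i$-independence of some $\balpha_i$ (granted by admissibility, via Theorem~\ref{thm:masser}).

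First I invoke Szemer\'edi's theorem (Lemma~\ref{lem:syndetique}(iv)) to find, for every $M$, an arithmetic progression $l_0, l_0+d, \dots, l_0+(M-1)d$ inside $\mZ$. The bounded-error hypothesis $\Vert\k_l - l\Theta\Vert = \mathcal O(1)$ forces each difference $\k_{l_0+jd} - \k_{l_0+(j-1)d}$ to lie in a set of at most $(2B+2)^r$ vectors, so a pigeonhole refinement yields a sub-AP of length $M' \gtrsim M/(2B+2)^r$ along which these successive differences all equal a single vector $\boldsymbol d \in \Z^r$ (with positive entries, once $d$ is large, since $\boldsymbol d \approx d\Theta$). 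The condition along this sub-AP reads
\[
\bmu\, T_{\k_{l_0'}}\,\tilde T^{j} \in L, \qquad j=0,1,\dots,M'-1,
\]
where $\tilde T := T_1^{d_1} \oplus \cdots \oplus T_r^{d_r}$ and $L \subset \Z^N$ is the lattice of multiplicative relations among the coordinates of $\balpha$.

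Let $L_{\mathrm{tor}} := \{\bnu \in \Z^N : \balpha^{\bnu} \in \mu_\infty\}$; since $L_{\mathrm{tor}}$ is saturated, $\Z^N / L_{\mathrm{tor}} \cong \Z^s$ for some $s$, to which we project by $\pi = (\pi_1,\dots,\pi_r)$ with $\pi_i$ the restriction to the $i$-th block. The map $j \mapsto \pi(\bmu T_{\k_{l_0'}} \tilde T^j)$ is then a $\Z^s$-valued linear recurrence sequence of order at most $N = n_1+\cdots+n_r$; vanishing on $M' \geq N$ consecutive values forces identical vanishing for all $j \geq 0$. Expanding this LRS into exponential modes $\lambda^j$ with $\lambda \in \bigcup_i \{\mu^{d_i} : \mu \in \mathrm{Spec}(T_i)\}$, the dominant roots $\rho_1^{d_1}, \dots, \rho_r^{d_r}$ are pairwise distinct by the pairwise multiplicative independence of $\rho_1,\dots,\rho_r$. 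A downward induction on spectral absolute value, relying on linear independence of exponential polynomials, separates the contributions block by block and yields
\[
\pi_i\bigl(\bmu_i\, T_i^{k_{l_0',i}+jd_i}\bigr) = 0 \qquad \text{for all } j \geq 0 \text{ and all } i.
\]

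Finally, $\pi_i(\bnu_i) = 0$ means $\balpha_i^{\bnu_i}$ is a root of unity, hence lies in the finite cyclic group $\mu_\infty(\mathbb Q(\balpha_1,\dots,\balpha_r))$ of some order $m$. Picking any index $i$ with $\bmu_i \neq 0$ (one exists because $\bmu \neq 0$), the nonzero vector $m\bmu_i$ satisfies $(T_i^k \balpha_i)^{m\bmu_i} = 1$ for every $k \in k_{l_0',i} + d_i \N$, an entire infinite arithmetic progression, contradicting the $T_i$-independence of $\balpha_i$. The main obstacle is the block-wise spectral separation in the previous paragraph: pairwise multiplicative independence of the $\rho_i$'s immediately rules out coincidences among the \emph{dominant} spectral modes of different blocks, but subdominant eigenvalues of different $T_i$'s may accidentally coincide, so the downward induction has to be carried out layer by layer in $\vert\lambda\vert$ while carefully exploiting the simplicity of $\rho_i$ as the Perron eigenvalue of $T_i\in\M$ (and, if necessary, replacing each $T_i$ by a suitable power to ensure aperiodicity) to keep the mode-by-mode cancellation unambiguous.
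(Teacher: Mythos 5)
Your overall architecture (Szemer\'edi $\to$ arithmetic progression of iteration vectors $\to$ linear recurrence $\to$ spectral block separation) is genuinely different from the paper's, but it has two real gaps. The first is the opening pigeonhole. From a long progression $l_0,l_0+d,\dots$ in $\mZ$ and $\Vert\k_l-l\Theta\Vert=\mathcal O(1)$ you correctly get that the successive differences $\k_{l_0+jd}-\k_{l_0+(j-1)d}$ lie in a bounded set, but this does \emph{not} yield a sub-progression (even of the fixed length $N$ you need) along which $j\mapsto\k_{l_0+jd}$ is affine: a sequence whose successive differences take two values need not contain such a sub-progression, and the constraint $\k_l=l\Theta+\mathcal O(1)$ only pushes the same problem one level up (the candidate common difference must be an integer vector within $\mathcal O(1)$ of $ed\,\Theta$, and $\lfloor x_0+sy\rfloor$ is not linear in $s$). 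What you need is a nontrivial combinatorial/Diophantine statement that the pigeonhole does not deliver. The paper sidesteps this entirely: it never linearizes $l\mapsto\k_l$; it combines Brown's lemma with a Noetherian minimality argument on the $\Q$-subspaces $U(\mZ')$ spanned by the log-vectors $T_{\k_l}(\x)$ to produce a \emph{single} difference vector $\be$ leaving a minimal subspace invariant, and Szemer\'edi is used only at the very end, one-dimensionally, on the exponent set $\{k_{i,l}\}$ of a single block.

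The second gap is the one you flag yourself and do not close: if $\lambda_1^{d_1}=\lambda_2^{d_2}$ for some non-root-of-unity eigenvalues of distinct blocks, the exponential modes genuinely mix, and neither a downward induction on $\vert\lambda\vert$, nor Perron simplicity, nor replacing the $T_i$ by powers removes the coincidence (it persists under taking powers). The paper's fix is arithmetic rather than spectral: for difference vectors $\be=\be(l,m)$ with $m$ large, a coincidence $\lambda_i^{e_i}=\lambda_j^{e_j}$ forces $e_i/e_j\in\mathbb Q$, whereas $e_i/e_j\to\log\rho(T_j)/\log\rho(T_i)\notin\mathbb Q$ by the pairwise multiplicative independence of the spectral radii; this gives the full separation \eqref{eq: valpropres} for \emph{all} eigenvalues, hence the block decomposition \eqref{eq: vi} of invariant subspaces. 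Your $\boldsymbol d$ comes from a pigeonhole with no such control. Relatedly, $\Z^N/L_{\mathrm{tor}}$ need not split block-wise when $L_{\mathrm{tor}}$ contains cross-block relations, so ``$\pi_i$ the restriction to the $i$-th block'' is not a well-defined component of $\pi$ --- localizing a cross-block relation to one block is exactly the content of the lemma. On the positive side, your propagation from $N$ consecutive zeros to all $j$ via the monic integer characteristic polynomial (and invertibility of the $T_i$ for backward propagation) is a clean alternative to the paper's subspace-chain stabilization in its final step.
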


\begin{proof}
We argue by contradiction, assuming that $\mZ$ is piecewise syndetic. 
For every pair of nonnegative integers $(l,m)$, with $m >0$, we define the $r$-tuple of natural numbers 
$\be:=\be(l,m)$ by  
\begin{equation}
\label{eq:sturm}
\be =\k_{l+m}-\k_l \,
\end{equation}
and we set $\E := \{\be(l,m) :\ l,\, m \in \N  \}$. 
Since by \eqref{eq:bounded} we have $\k_l=l\Theta + \mathcal O(1)$, we obtain that 
\begin{equation}\label{eq:equivalencekl1}
\be(l,m) = m\Theta + \mathcal O(1) \, ,
\end{equation}
which shows that $\E$ is infinite. However, given any positive integer $m_0$, the set 
$\{\be(l,m_0) : \ l\in \N\}$ is finite. 

Let us remark that given any pair 
$(\beta_1,\beta_2)$ of nonzero complex numbers that are not roots of unity, and any pair of natural numbers  
$(i,j)$, $1\leq i<j\leq r$, the set 
$$
\E_1 :=\left\{\be=(e_1,\ldots,e_r) \in \E : \beta_1^{e_i}=\beta_2^{e_j}\right\}
$$ 
is finite.  
Indeed, the set of natural numbers $u$ such that there exists a natural number $v$ for which 
$\beta_1^u=\beta_2^v$ is an ideal of $\Z$. Let $u_0\geq 0$ be a generator of this ideal and let  
$v_0 \in \N$ be such that $\beta_1^{u_0}=\beta_2^{v_0}$. For every $(e_1,\ldots,e_r) \in \E_1$, 
there exists an integer $a$ such that $e_i=a u_0$. 
We obtain that
$$
\beta_2^{e_j}=\beta_1^{e_i}=\beta_1^{au_0}=\beta_2^{av_0}\, .
$$
Since $\beta_2$ is nonzero and is not a root of unity, we have $e_j=a v_0$. 
Hence $e_i/e_j=u_0/v_0 \in \mathbb Q$. Since 
$e_i= k_{i,l+m}-k_{i,l}$ and $e_j=k_{j,l+m}-k_{j,l}$, we get that 
$$
\frac{k_{j,l+m}-k_{j,l}}{k_{i,l+m}-k_{i,l}}=\frac{u_0}{v_0}\in \mathbb Q \, .
$$
Now let us assume by contradiction that  $\E_1$ is infinite.  Then, there exist arbitrarily large integers 
$m$ with this property. 
Letting $m$ tends to infinity, we deduce from \eqref{eq:equivalencekl1} that 
the ratio $\log  \rho(T_i)/\log \rho(T_j)$ is rational. This provides a contradiction 
since by assumption $\rho(T_i)$ and $\rho(T_j)$ are multiplicatively independent. 
Hence $\E_1$ is finite.

Let us recall that, by assumption, none of the eigenvalues of the matrices $T_i$ 
is equal to zero or to a root of unity. The previous reasoning shows that 
 there exists a positive integer $m_0$ such that, for every $m \geq m_0$, every $l \in \N$, 
 every eigenvalue $\lambda_i$ of $T_i$, and every eigenvalue $\lambda_j$ of $T_j$, $i \neq j$, 
 we have  
\begin{equation}\label{eq: valpropres}
\lambda_i^{e_i} \neq \lambda_j^{e_j} \, ,
\end{equation}
where $\be=\be(l,m)=(e_1,\ldots,e_r)$. 
For such a vector $\be$,  set 
\begin{equation*}
T_\be:= T_1^{e_1} \oplus \cdots \oplus T_r^{e_r} \, .
\end{equation*}
Given a vector space $V\subset \C^N$, we have 
$$
V \subset \bigoplus_{i=1}^r \iota_i \circ \pi_i(V)\, ,
$$
where we let $\pi_i : \C^N=\C^{n_1+\cdots + n_r} \rightarrow \C^{n_i}$ 
denote the projection defined by $\pi_i(\x_1,\ldots,\x_r)=\x_i$ and $\iota_i: \C^{n_i} \rightarrow \C^N$ be such that $\pi_i\circ \iota_i$ is the identity on $\C^{n_i}$.  
By \eqref{eq: valpropres}, if $V$ is invariant under $T_\be$, then 
\begin{equation}\label{eq: vi}
V = \bigoplus_{i=1}^r \iota_i \circ \pi_i(V)\, .
\end{equation}

We are now ready to proceed with the proof of the lemma. 
Let us consider the column vector $\x$ whose transpose is the vector 
$$
(\log \alpha_{1,1},\log\alpha_{1,2},\ldots,\log\alpha_{1,n_1},\log\alpha_{2,1},\ldots,\log \alpha_{r,n_r})\ ,
$$
where $\log$ stands for a suitable determination of the logarithm (that is, the corresponding branch 
cut avoids all the coordinates of the points $T_{\k_l}\balpha$, $l \in \N$). 
By assumption, we have 
$$
\langle \bmu \, ,\,T_{\k_l}(\x)\rangle = 0 \, 
$$
for all $l \in \mZ$, where we let $\langle \,, \rangle$ denote the usual scalar product. 
Let  $U$ denote the orthogonal complement to the vector $\bmu$ in $\C^N$. 
This is a proper subspace of $\C^N$ defined over $\mathbb Q$, which contains all  vectors 
 $T_{\k_l} (\x)$, $l \in \mZ$. 
 Given $\mZ' \subset \mZ$, we let $U(\mZ')$ denote the smallest vector subspace of $\C^N$ defined over $\mathbb Q$  
 and containing all 
$T_{\k_l} (\x)$, $l \in \mZ'$. 
It follows that $U(\mZ) \subset U$. Furthermore, if $\mZ'' \subset \mZ'$, then $U(\mZ'') \subset U(\mZ')$. 
The subspace $U(\mZ)$ having finite dimension, there exists a subset $\mZ_1 \subset \mZ$ 
that is piecewise syndetic, 
and such that for all piecewise syndetic set $\mZ' \subset \mZ_1$,  one has 
$U(\mZ')=U(\mZ_1)$. 
Let $B$ denote a bound for $\mZ_1$ and set 
$$
\E_0 := \{ \be(l,m) : m\in [m_0,m_0+B],l\in\mZ_1,l+m \in \mZ_1\}\, , 
$$
where $m_0$ is defined as in the first part of the proof (just before \eqref{eq: valpropres}). 
This is a finite set. 
Let 
$$
\mZ_2 := \{l\in\mZ_1 : \exists m \in [m_0,m_0+B] \mbox{ such that } l+m\in\mZ_1\}\,.
$$
By Lemma \ref{lem:syndetique}, the set $\mZ_2$ is piecewise syndetic. 
Now, given $\be=\be(l,m) \in \E_0$, we set 
$$
\mZ_\be :=\{l \in \mZ_2 : T_{\be}(T_{\k_{l}}(\x))=T_{\k_{l+m}}(\x)\in U(\mZ_1)  \} \, .
$$  
If $l\in\mZ_2$, then there exists $m\in [m_0,m_0+b]$ such that $l\in\mZ_{\be}$ for some $\be=\be(l,m)$. 
Hence, $\mZ_2 \subset \cup_{\be \in \E_0} \mZ_\be$. Since $\mZ_2$ is piecewise syndetic, Lemma \ref{lem:syndetique} 
ensures the existence of $\be \in \E_0$  such that $\mZ_{\be}$ is piecewise syndetic. Furthermore, $\mZ_{\be}\subset \mZ_1$.  
Thus we obtain that 
$$
U(\mZ_{\be})= U(\mZ_1) \, .
$$
Hence, the vector space $U(\mZ_1)$ is invariant under $T_{\be}$. Indeed,  if $T_{\k_{l}}(\x)\in U(\mZ_1)=U(\mZ_{\be})$, then 
$T_{\be}(T_{\k_{l}}(\x))\in U(\mZ_1)$.  
By \eqref{eq: vi}, there is a decomposition of the form 
$$
U(\mZ_1) = \bigoplus_{i=1}^r \iota_i (U_i)\, , 
$$
where, for every $i$, $U_i = \pi_i(U(\mZ_1)) \subset \C^{n_i}$ is a $T_i^{e_i}$-invariant vector space  
defined over $\mathbb Q$.  
Since $U(\mZ_1)$ is a proper subspace of $\C^N$, there exists $i$, $1 \leq i \leq r$, such that 
$U_{i}$ is a proper subspace of $\C^{n_{i}}$. This vector space being defined over $\mathbb Q$, it has  a 
nonzero vector 
$\bnu_0 \in \Z^{n_{i}}$ in its orthogonal complement.  We thus have 
\begin{equation}\label{eq:produitscalaireUnematrice}
\langle \bnu_0\, ,\, T_{i}^{e_{i}k_{i,l}}(\x_i)\rangle = 0\,,
\end{equation} 
for all $l \in \mZ_1$, where $\x_i:=\pi_i(\x)$.    
The set $\mZ_1$ being piecewise syndetic, we infer from the definition of the sequence $\k_l$  
that the set $\mZ_2:= \{k_{i,l} : l\in \mZ_1\}$ 
is also piecewise syndetic.  
By property (iv) of Lemma \ref{lem:syndetique}, it contains arbitrarily long arithmetic progressions. 
Let us consider an arithmetic progression of length $n_i$ in $\mZ_2$, say 
$$
a,a+b,a+2b,\ldots,a+(n_i-1)b\, ,
$$
where $a,b \in \N$. 
Let us also  consider the sequence of  vector spaces 
$$V_0\subset \cdots \subset V_{n_i-1}\subset \bnu_0^\perp$$ 
defined by 
$$
V_j = {\rm Vect}_\mathbb{Q}\left\{T_i^{e_ia}(\x_i),\ldots,T_i^{e_i(a+jb)}(\x_i) \right\} \, .
$$
Since $\dim V_{n_i-1}<n_i$, there exists $j_0$ such that 
$V_{j_0}=V_{j_0+1}$. The vector space $V_{j_0}$ is then invariant under $T^{e_ib}$ 
and we get that 
$\langle \bnu_0 \, ,\, T^{e_i(a+kb)}(\x_i)\rangle =0$, 
or equivalently that 
$$
\left(T_i^{e_i(a+kb)}\balpha_i\right)^{\bnu_0} = 1 \,,  \; \mbox{for all } k\in\N\,.
$$
Hence $\balpha_i$ is not $T_i$-independent.  
By Theorem \ref{thm:masser}, this provides a contradiction with  the assumption 
that the pair $(T_i,\balpha_i)$ is admissible.  
\end{proof}

\begin{lem}\label{lem:toresMahler}
We continue with the notation of Theorem \ref{thm:lemmedezero}.  
Let $\gamma \in \Q^\star$ and $\bmu$ be a nonzero integer $N$-tuple. 
Then  the set 
$$
\mZ :=\left\{ l \in \N : \left(T_{\k_l}\balpha\right)^\bmu = \gamma \right\}
$$
is negligible. 
\end{lem}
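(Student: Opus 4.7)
The plan is to reduce Lemma \ref{lem:toresMahler} to the preceding Lemma \ref{lem:admislocglob}, exploiting the fact that the multiplicative constant $\gamma$ disappears when one takes ratios of two values of $(T_{\k_l}\balpha)^\bmu$. Suppose for contradiction that $\mZ$ is piecewise syndetic with bound $B$. If both $l$ and $l+m$ lie in $\mZ$, set $\be := \be(l,m) = \k_{l+m} - \k_l$ as in \eqref{eq:sturm}. Since each $T_i$ acts only on its own block of variables, one has $T_{\k_{l+m}}\balpha = T_\be(T_{\k_l}\balpha)$, and invoking the identity $(T\bv)^\bmu = \bv^{\bmu T}$ from Section~\ref{sec: notation} yields
$$
\gamma \;=\; (T_{\k_{l+m}}\balpha)^\bmu \;=\; (T_{\k_l}\balpha)^{\bmu T_\be} \qquad \text{and} \qquad \gamma \;=\; (T_{\k_l}\balpha)^\bmu,
$$
so that $(T_{\k_l}\balpha)^{\bmu T_\be - \bmu} = 1$.

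I would then fix $m_0$ large enough that, for every $l$ and every $m \geq m_0$, the vector $\be(l,m)$ has strictly positive integer coordinates. This is legitimate because $\be(l,m) = m\Theta + \mathcal O(1)$ (as in \eqref{eq:equivalencekl1}) and $\Theta$ has positive entries (each $\rho(T_i) > 1$). By Lemma \ref{lem:syndetique}(iii), the set $\mZ' := \{l \in \mZ : (l + \{m_0,\ldots,m_0+B\}) \cap \mZ \neq \emptyset\}$ is piecewise syndetic. Partition $\mZ'$ first by the value of $m \in \{m_0,\ldots,m_0+B\}$ for which $l+m \in \mZ$, and then, for each such fixed $m$, by the value of $\be(l,m)$; the latter takes only finitely many values by \eqref{eq:bounded}. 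Two successive applications of Lemma \ref{lem:syndetique}(ii) produce a single pair $(m,\be)$ with $\be$ having positive coordinates, together with a piecewise syndetic subset $\mZ_\be \subset \mZ'$ on which $\be(l,m) = \be$ and $l + m \in \mZ$.

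The key remaining point is to check that the relation $(T_{\k_l}\balpha)^{\bmu T_\be - \bmu} = 1$ is nontrivial, i.e.\ that $\bnu := \bmu T_\be - \bmu \neq \boldsymbol 0$. Now $T_\be = T_1^{e_1} \oplus \cdots \oplus T_r^{e_r}$ with all $e_i \geq 1$, and since each $T_i$ belongs to the class $\M$, no eigenvalue of $T_i$ is a root of unity; hence no $T_i^{e_i}$ admits $1$ as an eigenvalue, so $T_\be - I$ is invertible. Consequently $\bnu = \bmu(T_\be - I) \neq \boldsymbol 0$, and it is clearly an integer $N$-tuple. Lemma \ref{lem:admislocglob}, applied to $\bnu$, asserts that the set $\{l \in \N : (T_{\k_l}\balpha)^\bnu = 1\}$ is negligible, which contradicts the fact that it contains the piecewise syndetic set $\mZ_\be$. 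The main obstacle is really just correctly packaging the reduction; all of the analytic substance has already been absorbed into Lemma \ref{lem:admislocglob}.
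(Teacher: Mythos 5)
Your proof is correct and follows essentially the same route as the paper: take differences $\be(l,m)=\k_{l+m}-\k_l$ between two elements of $\mZ$ so that $\gamma$ cancels in the ratio, extract a single piecewise syndetic subset with a fixed $\be$ via Brown's lemma, and conclude by applying Lemma \ref{lem:admislocglob} to $\bmu T_\be-\bmu$. Your additional step of forcing $m\geq m_0$ so that every coordinate of $\be$ is strictly positive (hence $T_\be-{\rm I}$ is invertible and $\bmu T_\be-\bmu\neq\boldsymbol 0$) is a slight tightening of the paper's argument, which only takes $m\leq B$ and infers the nontriviality from the absence of roots of unity among the eigenvalues of the $T_i$.
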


\begin{proof}
Let us assume by contradiction that  $\mZ$ is piecewise syndetic and let $B$ be a bound for $\mZ$. 
Set 
$$
\mathcal E := \{\be(l,m) : l \in \mZ, l+m \in \mZ, m\leq B\} \,.
$$
This is a finite set. 
For every $\be \in \mathcal E$, set 
$$
\mZ_\be:=\left\{ l \in \N  : \left(T_{\k_l}\balpha\right)^{\bmu-\bmu T_\be} = 1 \right\}\, 
$$
and 
$$
\mZ' :=\{ l\in \mZ : \exists m\leq B \mbox{ such that } l+m \in \mZ\} \, .
$$ 
Lemma \ref{lem:syndetique} implies that $\mZ'$ is piecewise syndetic. 
For $l\in \mZ'$, there exists $\be=e(l,m) \in \mathcal E$ such that $m \leq B$ and $l+m \in \mZ$. 
Thus,
\begin{equation*}
\left(T_{\k_l}\balpha\right)^{\bmu-\bmu T_\be}= \gamma/\gamma = 1\, 
\end{equation*}
and we get that $\mZ' \subset \cup_{\be \in \mathcal E} \mZ_\be$. Lemma \ref{lem:syndetique} ensures the existence of  
$\be \in \mathcal E$ such that $\mZ_\be$ is piecewise syndetic. 
By Lemma \ref{lem:admislocglob}, it  follows that $\bmu - T_\be\bmu = 0$ for such a vector $\be$, which 
contradicts the assumption that none of the eigenvalues of $T_i$ is a root of unity.  
\end{proof}

 We are now ready to prove Theorem \ref{thm:lemmedezero}. 

\begin{proof}[Proof of Theorem \ref{thm:lemmedezero}] 
Set 
$$
\mZ:=\left\{ l \in \N : g(T_{\k_l}\balpha)=0 \right\}\, .
$$
The assumption that the pairs $(T_i,\balpha_i)$ are admissible allows us to apply  \cite[Theorem 3 ]{CZ05} 
to the sequence of points $(T_{\k_l}\balpha)_{l \in \N}$. 
In order to apply the result of Corvaja and Zannier, we need to prove that the following three conditions are satisfied.  
\begin{itemize}
\item[{\rm (i)}] There exists a finite set of places $\S$ such that the algebraic points $T_{\k_l}\balpha$ are $\S$-units. 
\item[{\rm (ii)}] The sequence $(T_{\k_l}\balpha)_{l \in \N}$ tends to $0$. 
\item[{\rm (iii)}] One has $\log H(T_{\k_l}\balpha) = \mathcal O(-\log \Vert T_{\k_l}\balpha\Vert )$, where we let $H$ denote 
the absolute Weil height (see Section \ref{sec: notation}).
\end{itemize}
Condition (i) is easy to check. Indeed, any finite number of nonzero algebraic numbers are $\S$-units  
for some $\S$. 
The coordinates of the vector $\balpha$ are thus $\S$-units for some fixed $\S$, and it follows directly that  
all $T_{\k_l}\balpha$ are  $\S$-units too. 
Since by assumption the pairs $(T_i,\balpha_i)$ are admissible, Theorem \ref{thm:masser} implies that 
the sequence $(T_{\k_l}\balpha)_{l \in \N}$ tends to $0$, and 
thus (ii) is satisfied. 
Next we check that (iii) holds. We infer from Lemma \ref{lem:uniformiteconvergencefamilles} 
that 
$$
\Vert T_{\k_l} \Vert= \mathcal O(e^l) \ \mbox{ and }\ \log \Vert T_{\k_l}\balpha \Vert \leq -c e^l\, ,
$$
for some positive real number $c$ and all sufficiently large integers $l$. On the other hand,  we have 
$\log H(T_{\k_l}\balpha) = \mathcal O(\Vert T_{\k_l} \Vert)$. It thus follows that 
$$
\log H(T_{\k_l}\balpha) =  \mathcal O(-\log \Vert T_{\k_l}\balpha \Vert)\,, 
$$
which shows that Condition (iii) is satisfied.  

Applying \cite[Theorem 3]{CZ05} to the sequence of algebraic points 
$(T_{\k_l}\balpha)_{l \in \N}$ 
and to the function $g(\z)$,
we obtain the existence of a finite number of $N$-tuples $\bmu_1,\ldots,\bmu_s$ and of algebraic numbers 
$\gamma_1,\ldots,\gamma_s$, 
such that 
$$
\mZ \subset \bigcup_{i=1}^s \mZ_i 
$$ 
where 
$$
\mZ_i:=\left\{ l \in \N : \left(T_{\k_l}\balpha\right)^{\bmu_i} = \gamma_i \right\}\, .
$$
By Lemma \ref{lem:toresMahler}, the sets $\mZ_i$ are all negligible. It thus follows from Lemma \ref{lem:syndetique2} 
that $\mZ$ is also negligible, which proves the theorem.
\end{proof}


\section{Mahler's method in families}
\label{sec: families}

In this section, we state Theorem \ref{thm: families}, a general lifting theorem dealing with families of 
Mahler systems associated with sufficiently independent transformations.  
In Section \ref{sec: final}, Theorems \ref{thm: permanence}, 
 \ref{thm: purete2}, and \ref{thm: purity}, as well as Corollary \ref{cor:2}, will  
be deduced from this result. 


\subsection{Statement of Theorem \ref{thm: families}}

Let $r$ be a positive integer. For every $i$, $1\leq i \leq r$, let us consider a Mahler system
\begin{equation}\stepcounter{equation}
\label{eq:mahleriT}\tag{\theequation .$i$}
\left(\begin{array}{c} f_{i,1}(\z_i) \\ \vdots \\ f_{i,m_i}(\z_i) \end{array} \right) = 
A_i(\z_i)\left(\begin{array}{c} f_{i,1}(T_i\z_i) \\ \vdots \\ f_{i,m_i}(T_i\z_i) 
\end{array} \right) 
\end{equation}  
where $n_i$ and $m_i$ are positive integers,  
$\z_i:=(z_{i,1},\ldots,z_{i,n_i})$ is a vector of indeterminates, $T_i$ is an $n_i\times n_i$ 
matrix with nonnegative integer coefficients and with spectral radius $\rho(T_i)$,  
$A_i(\z_i)$ belongs to  $\in{\rm GL}_{m_i}(\Q(\z_i))$, 
and $f_{i,1}(\z_i),\ldots,f_{i,m_i}(\z_i)$ belong to $\Q\{\z_i\}$.  
We also let $\balpha_i\in(\Q^\star)^{n_i}$ and $\X_i:=(X_{i,1},\ldots,X_{i,m_i})$ denote a vector of indeterminates. 
Set $\z:=(\z_1,\ldots,\z_r)$ and $\balpha:=(\balpha_1,\ldots,\balpha_r)$.

\begin{rem}
Note that one has to replace $A_i(\z_i)$ by $A_i(\z_i)^{-1}$ to obtain a system as in \eqref{eq:mahler}. 
However, it is 
more natural in our proof to work with systems written in the form \eqref{eq:mahleriT}.  
We recall that $\overline{\mathbb Q(\z)}_{\balpha}$ denote the algebraic closure  of $\Q(\z)$ in 
$\Q\{\z-\balpha\}$.  
\end{rem}

\begin{thm}\label{thm: families}
We continue with the above assumptions. 
Let us assume that the two following conditions hold. 
\begin{enumerate}
\item[\rm{(i)}] For every  $i$, $\balpha_i$ is regular w.r.t.  \eqref{eq:mahleriT} and 
$(T_i,\balpha_i)$ is admissible.
\item[\rm{(ii)}] $\rho(T_1),\ldots,\rho(T_r)$ are pairwise multiplicatively independent. 
\end{enumerate}
 Then for every polynomial 
$P \in \Q[\X_1,\ldots,\X_r]$ that is homogeneous with respect to each family of 
indeterminates $\X_1,\ldots,\X_r$,  
and such that  
$$
P(f_{1,1}(\balpha_1),\ldots,f_{r,m_r}(\balpha_r)) = 0\,,
$$
there exists a polynomial $Q\in\overline{\mathbb Q(\z)}_{\balpha}[\X_1,\ldots,\X_r]$, 
homogeneous with respect to each family of indeterminates  $\X_1,\ldots,\X_r$, and such that 
$$
Q(\z,f_{1,1}(\z_1),\ldots,f_{r,m_r}(\z_r)) = 0 \mbox{ and }\ Q(\balpha,\X_1,\ldots,\X_r)=P(\X_1,\ldots,\X_r)\, .
$$
Furthermore, if $\Q(\z)(f_{1,1}(\z_1),\ldots,f_{r,m_r}(\z_r))$ is a regular extension of $\Q(\z)$, 
then there exists such a polynomial $Q$ in $\Q[\z,\X_1,\ldots,\X_r]$.
\end{thm}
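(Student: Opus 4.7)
The strategy is to transfer the algebraic relation $P$ at $\balpha$ into a sequence of polynomial identities at the iterated points $T_{\k_l}\balpha$, and then invoke the vanishing theorem of Section \ref{sec: vanishing} to upgrade one of these identities to a functional relation $Q$. The obstruction to iterating all systems uniformly is that $\rho(T_1),\ldots,\rho(T_r)$ are pairwise multiplicatively independent; the remedy is to restrict to iteration vectors $\k_l=(k_{l,1},\ldots,k_{l,r})$ lying at bounded distance from the line $l\Theta$, where $\Theta=(1/\log\rho(T_1),\ldots,1/\log\rho(T_r))$. By Lemma \ref{lem:uniformiteconvergencefamilles}, this choice forces $\|T_{\k_l}\|=\mathcal O(e^l)$ and $\log\|T_{\k_l}\balpha\|\leq -ce^l$, which is precisely what the vanishing theorem requires.

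First, I would iterate each Mahler system: setting $A_{i,k_i}(\z_i):=A_i(\z_i)A_i(T_i\z_i)\cdots A_i(T_i^{k_i-1}\z_i)$, the system gives $(f_{i,j}(\z_i))_j = A_{i,k_i}(\z_i)(f_{i,j}(T_i^{k_i}\z_i))_j$. Using the multi-homogeneity of $P$ in each block $\X_i$, substituting these identities at $\balpha$ rewrites $P(f_{i,j}(\balpha_i))$ as $\widetilde{P}_{\k_l}(\balpha,f_{i,j}(T_i^{k_{l,i}}\balpha_i))$ for a polynomial $\widetilde{P}_{\k_l}\in\Q(\z)[\X_1,\ldots,\X_r]$ still multi-homogeneous of the same multidegree as $P$. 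Admissibility of $(T_i,\balpha_i)$ together with regularity of $\balpha_i$ guarantees that all $A_{i,k_i}(\balpha_i)$ are well defined and invertible, so the multidegree is preserved.

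Next, I would construct the lift $Q$ via the relation matrices of Section \ref{sec: matrix}. For each $\k$, consider the ideal $\mathcal I_{\k}$ of multi-homogeneous polynomials over $\Q$ of the fixed multidegree of $P$ that vanish on the tuple $(f_{i,j}(T_i^{k_i}\balpha_i))$. This lives in a finite-dimensional $\Q$-vector space, so $\mathcal I_{\k_l}$ takes only finitely many values as $l$ varies; by Lemma \ref{lem:syndetique}(ii), there exists a piecewise syndetic set $\mZ\subset\N$ on which $\mathcal I_{\k_l}$ is constant, equal to some ideal $\mathcal I$. Applying Hilbert's Nullstellensatz to the coherent structure provided by this stabilization — this is the role played by the relation matrices — yields a multi-homogeneous polynomial $Q\in\overline{\mathbb Q(\z)}_\balpha[\X_1,\ldots,\X_r]$ satisfying $Q(\balpha,\X_1,\ldots,\X_r)=P(\X_1,\ldots,\X_r)$ and such that the analytic function
\begin{equation*}
g(\z)\;:=\;Q(\z,f_{1,1}(\z_1),\ldots,f_{r,m_r}(\z_r))
\end{equation*}
satisfies $g(T_{\k_l}\balpha)=0$ for every $l\in\mZ$.

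Finally I would invoke Theorem \ref{thm:lemmedezero}. Since the pairs $(T_i,\balpha_i)$ are admissible, the spectral radii are pairwise multiplicatively independent, and $\|\k_l-l\Theta\|=\mathcal O(1)$, the hypotheses are met; because $\mZ$ is piecewise syndetic and hence not negligible, the vanishing theorem forces $g\equiv 0$, which is the required functional identity $Q(\z,f_{i,j}(\z_i))=0$. The main obstacle I anticipate is the construction of the relation matrices and the verification that the resulting $Q$ (a) lies in $\overline{\mathbb Q(\z)}_\balpha$, (b) specializes to $P$ at $\balpha$, and (c) preserves the prescribed multidegree. This step is precisely where the novelty of the paper replaces the earlier techniques that demanded $A(\z)$ be regular singular, and where partition-regularity of piecewise syndetic sets (Lemma \ref{lem:syndetique}) substitutes for the structured arithmetic progressions used in previous work. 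For the last assertion, if $\Q(\z)(f_{i,j}(\z_i))$ is a regular extension of $\Q(\z)$, then the vanishing ideal of the tuple of functions over $\overline{\mathbb Q(\z)}$ descends to $\Q(\z)$, and clearing denominators places $Q$ in $\Q[\z,\X_1,\ldots,\X_r]$.
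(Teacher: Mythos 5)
Your skeleton matches the paper's up to the last step: iterate the systems along vectors $\k_l$ at bounded distance from $l\Theta$, build relation matrices from the stabilized vanishing ideal via Hilbert's Nullstellensatz and piecewise syndetic sets, and produce a candidate $Q\in\overline{\mathbb Q(\z)}_{\balpha}[\X_1,\ldots,\X_r]$ with $Q(\balpha,\X)=P(\X)$. But the concluding step is where the proposal breaks down. You want to set $g(\z):=Q(\z,f_{1,1}(\z_1),\ldots,f_{r,m_r}(\z_r))$, observe that $g(T_{\k_l}\balpha)=0$ on a piecewise syndetic set, and invoke Theorem \ref{thm:lemmedezero} to force $g\equiv 0$. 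That theorem, however, applies only to nonzero elements of $\Q\{\z\}$ — convergent power series at the origin with algebraic coefficients. The coefficients of $Q$ come from the relation matrix $\bphi$, whose entries are merely algebraic over $\Q(\z)$ and, after Lemma \ref{lem:zeroMasserextended}, are realized as analytic functions only on a neighborhood of $T_{\k_{l_0}}\balpha$; nothing guarantees they extend to single-valued analytic functions at the origin, nor even that $g(T_{\k_l}\balpha)$ is well defined for all large $l$. This is exactly the obstruction the paper flags in the warning of Section \ref{sec: cle}: the function $F(\bTheta_l(\z),T_{\k_l-\k_{l_0}}\z)$ is \emph{not} a power series in $\z$ with algebraic coefficients, only an analytic function near $\bxi=T_{\k_{l_0}}\balpha$. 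Consequently the vanishing theorem cannot deliver the functional identity, and the paper must instead prove the key Lemma \ref{lem:nulliteF} ($F(\bTheta_{l_0}(\z),\z)=0$) by a genuine transcendence argument: construction of an auxiliary function via a pigeonhole count on the spaces $\I^\perp(\delta_1,\delta_2)$, an analytic upper bound for $\vert E(\bR_{\k_l}(\balpha),T_{\k_l}\balpha)\vert$, a Liouville lower bound, and a contradiction for $l\gg\delta_2\gg\delta_1$. That four-step argument is the technical heart of the proof and is entirely absent from your plan.

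Two secondary points. First, your claim that $\mathcal I_{\k_l}$ "takes only finitely many values" because it lives in a finite-dimensional space is not valid (a finite-dimensional space over $\Q$ has infinitely many subspaces); the paper gets the needed stabilization from Noetherianity of $\mathcal A[\Y]$ together with a minimality argument over piecewise syndetic sets (Facts \ref{aff:ideal}--\ref{aff:inclusionideaux}). Second, for the regular-extension refinement, the correct mechanism is linear disjointness of $\Q(\z)(\f(\z))$ from $\overline{\Q(\z)}$: one expands $Q$ in powers of the primitive element $\varphi(T_{\k_{l_0}}\z)$, deduces that each $\Q(\z)$-coefficient polynomial annihilates $\f(\z)$, and re-evaluates the powers at $\balpha$; "clearing denominators" alone does not produce a polynomial in $\Q[\z,\X]$ specializing to $P$.
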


\subsection{Notation}\label{subsec: not}
 In order to lighten the notation, we let $\f_i(\z_i)$ denote the column vector formed by  
the functions $f_{i,1}(\z_i),\ldots,f_{i,m_i}(\z_i)$. We also set 
\begin{equation}\label{eq:dimensions}
M := \sum_{i=1}^r m_i \qquad \text{ and } \qquad N := \sum_{i=1}^r n_i.
\end{equation}
Iterating $k$ times the system \eqref{eq:mahleriT}, one obtains the new system 
\begin{equation}\stepcounter{equation}
\label{eq:mahleritere}\tag{\theequation .$i$}
\f_i(\z_i) = A_{i,k}(\z_i)\f_{i}(T_i^k\z_i) \,,
\end{equation}
where    
$$
A_{i,k}(\z_i):=A_i(\z_i)A_i(T_i\z_i) \cdots A_i(T_i^{k-1}\z_i) \,.
$$
Set $\z:=(\z_1,\ldots,\z_r)$ and by abuse of notation $\f_{i}(\z):=\f_{i}(\z_i)$. 
For every $r$-tuple of positive integers $\k=(k_1,\ldots,k_r)$, one can gather 
the systems \eqref{eq:mahleritere} 
into a single one as follows: 
\begin{equation}
\label{eq: blocks}
\left(\begin{array}{c}  \f_{1}(\z)  \\ \vdots \\ 
\f_{r}(\z)\end{array} \right) = \left(\begin{array}{ccc} 
A_{1,k_1}(\z_1) & &  \\ 
& \ddots & \\
&& A_{r,k_r}(\z_r)
\end{array}
\right)
\left(\begin{array}{c}   \f_{1}(T_\k\z) \\ \vdots \\ 
\f_{r}(T_\k\z) \end{array}\right) \, ,
\end{equation}
where $T_\k:=T_1^{k_1}\oplus \cdots \oplus T_r^{k_r}$. 
Finally, we let $\f(\z)$ denote the column vector formed by all functions $f_{i,j}(\z_i)$, and $A_\k(\z)$ 
denote the block diagonal matrix defined so that \eqref{eq: blocks} can be shortened to
\begin{equation}
\label{eq: blockcompact}
\f(\z) = A_\k(\z)\f(T_\k\z) \,.
\end{equation}
We keep this notation for the rest of the paper. 

\subsection{Choice of the sequence $\k_l$} 
In order to prove Theorem \ref{thm: families},  one needs to choose a sequence 
$(\k_l)_{l\in \N} \subset \N^r$ satisfying the asymptotic  
\begin{equation}\label{eq:bounded2}
\k_l = \Theta l + \mathcal O(1),\qquad l \rightarrow \infty\, ,
\end{equation}
where $\Theta$ is defined as in \eqref{eq:Theta}, as well as some additional properties. 
Let us define a partial order over $\Z^r$ by setting $\k_1 \preceq \k_2$ when the vector $\k_2-\k_1$ has nonnegative coordinates. Let $V$ denote the orthogonal complement  to the vector $\Theta$ in $\Z^r$. That is,  
$$
V:=\{ \bmu \in \Z^r\, :\, \langle \bmu,\Theta \rangle = 0\}\, .
$$
Let $V^\perp$ denote the orthogonal complement to $V$ in $\Z^r$. That is,  
$$
V^\perp :=\{ \k \in \Z^r\, :\, \langle \k,\bmu \rangle = 0 \mbox{ for all } \bmu \in V\}\, .
$$ 
We also set $V_+^\perp:=V^\perp \cap \N^r$.

\begin{lem}\label{lem:choixkl}
There exists a sequence of nonnegative $r$-tuples 
$(\k_l)_{l \in \N} \subset V_+^\perp$, 
satisfying the asymptotic \eqref{eq:bounded2}, and such that 
 $\k_{l} \preceq \k_{l+1}$ for all $l \in \N$.
\end{lem}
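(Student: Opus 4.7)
The plan is to approximate $l\Theta$ by nonnegative integer combinations of carefully chosen vectors of $V_+^\perp$. Let $d$ denote the dimension over $\mathbb Q$ of the subspace of $\mathbb R$ spanned by the coordinates $1/\log\rho(T_1),\ldots,1/\log\rho(T_r)$ of $\Theta$. Then $V\otimes\mathbb Q$ is the $\mathbb Q$-subspace of $\mathbb Q^r$ defined by the single relation $\langle\bmu,\Theta\rangle=0$, and has codimension $d$. Consequently $L := V^\perp$ is a sublattice of $\Z^r$ of rank $d$ whose real span $L_{\mathbb R}:=L\otimes\mathbb R$ is the orthogonal complement of $V\otimes\mathbb R$ in $\mathbb R^r$; this subspace is $\mathbb Q$-defined and contains $\Theta$.

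The key step is the construction of a representation of $\Theta$ as a strictly positive combination of elements of $V_+^\perp$. Consider the set $P:=L_{\mathbb R}\cap\mathbb R_{\geq 0}^r$, a polyhedral cone cut out inside the $\mathbb Q$-defined subspace $L_{\mathbb R}$ by the rational inequalities $z_j\geq 0$. Since the coordinates of $\Theta$ are strictly positive, $\Theta$ lies in the interior of $P$, so $P$ is full-dimensional and pointed; in particular it has finitely many extremal rays, and each one is rational. On each extremal ray I pick the primitive integer vector, which automatically belongs to $L\cap\N^r=V_+^\perp$; call these $\v_1,\ldots,\v_m$, so that $P=\{\sum_i s_i\v_i:s_i\geq 0\}$. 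I then choose $\epsilon>0$ small enough for $\Theta-\epsilon(\v_1+\cdots+\v_m)$ to remain in $P$, write it as $\sum_i r_i\v_i$ with $r_i\geq 0$, and set $t_i:=r_i+\epsilon>0$. This produces a representation $\Theta=\sum_{i=1}^m t_i\v_i$ with all $t_i$ strictly positive.

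With this representation in hand, I would define $\k_l := \sum_{i=1}^m \lfloor l t_i\rfloor\,\v_i$. Each $\k_l$ is a nonnegative integer combination of elements of $V_+^\perp$, so lies in $V_+^\perp$. The estimate $\Vert \k_l-l\Theta\Vert \leq \sum_i\Vert\v_i\Vert = \mathcal O(1)$ is immediate from $|\lfloor lt_i\rfloor - lt_i|<1$ for every $i$. Finally, $\k_{l+1}-\k_l = \sum_{i=1}^m(\lfloor (l+1)t_i\rfloor-\lfloor lt_i\rfloor)\v_i$; each scalar coefficient is a nonnegative integer (because $t_i>0$), and each $\v_i$ has nonnegative coordinates, so $\k_{l+1}-\k_l\in\N^r$, which is $\k_l\preceq\k_{l+1}$. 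The only genuinely non-routine step in the above argument is the production of a strictly positive representation of $\Theta$; everything else amounts to direct verification.
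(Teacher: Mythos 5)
Your proof is correct, and it takes a genuinely different route from the paper's. The paper picks an arbitrary $\Z$-basis $\boldsymbol e_1,\ldots,\boldsymbol e_s$ of $V^\perp$, writes $l\Theta=\sum\lambda_i(l)\boldsymbol e_i$ and rounds the coefficients; the resulting vectors $\a_l$ lie in $V_+^\perp$ only for $l$ large (positivity comes from $\Theta>0$ plus the bounded rounding error) and need not be monotone, so the paper then patches monotonicity by observing that $\a_{l_1}\preceq\a_{l_2}$ whenever $l_2-l_1$ exceeds a fixed threshold, and defines $\k_l$ as a block-constant subsequence of the $\a_l$. You instead choose the generating set adapted to the positivity constraint from the outset: the primitive integer vectors $\v_1,\ldots,\v_m$ on the extremal rays of the rational pointed cone $P=L_{\mathbb R}\cap\mathbb R_{\geq 0}^r$ all lie in $V_+^\perp$, and writing $\Theta=\sum t_i\v_i$ with $t_i\geq 0$ makes $\k_l=\sum\lfloor lt_i\rfloor\v_i$ automatically a member of $V_+^\perp$ for every $l$ and automatically nondecreasing, since each $\lfloor lt_i\rfloor$ is a nonnegative, nondecreasing function of $l$ and each $\v_i\in\N^r$. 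Your approach buys a cleaner sequence (no threshold $l_0$, no block repetition, genuine tracking of $l\Theta$ at every step) at the price of invoking some standard polyhedral geometry (Minkowski--Weyl for pointed cones and rationality of extremal rays of a rational cone), whereas the paper's argument is more elementary but needs the patch-up step. One small remark: the $\epsilon$-perturbation to make the $t_i$ strictly positive is superfluous, since $l\mapsto\lfloor lt\rfloor$ is already nondecreasing and nonnegative for any $t\geq 0$; you could simply use the coefficients $s_i$ of any conical representation $\Theta=\sum s_i\v_i$.
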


\begin{proof}
We first note that the set $V$ could possibly be reduced to $\{0\}$; this is the case when the numbers  
$$
1/\log \rho(T_1),\ldots,1/\log \rho(T_r)
$$
are linearly independent over the rational numbers\footnote{When $r >2$, It is not known 
whether the pairwise multiplicative independence of the numbers $\rho(T_i)$ implies  
that their reciprocals are  linearly independent over $\mathbb Q$.}. 
In that case, one can simply choose  
$$
\k_l:=\left(\left\lfloor \frac{l}{\rho(T_1)} \right\rfloor ,\ldots, \left\lfloor\frac{l}{\rho(T_r)} \right\rfloor \right)\, .
$$
In contrast, the set $V^\perp$ is a nonempty $\Z$-module. Indeed, the $\mathbb R$-vector space 
generated by $V^\perp$ in $\mathbb R^r$ contains the vector $\Theta$.  
Let $\boldsymbol e_1,\ldots,\boldsymbol e_s$ be a $\Z$-basis of $V^\perp$. For all $l\in \N$, 
there exist real numbers $\lambda_1(l),\ldots,\lambda_s(l)$ such that 
$$
l\Theta = \lambda_1(l) \boldsymbol e_1 +\cdots + \lambda_s(l)\boldsymbol e_s \, .
$$ 
We deduce that 
\begin{equation}\label{eq:approximationTheta}
\Vert l\Theta - \lfloor \lambda_1(l) \rfloor \boldsymbol e_1 +\cdots + \lfloor \lambda_s(l) 
\rfloor \boldsymbol e_s \Vert \leq \sum_{i=1}^s \Vert \boldsymbol e_i \Vert\, .
\end{equation}
Since all coordinates of $\Theta$ are positive, there exists a nonnegative integer $l_0$ such that, 
for all $l \geq l_0$, the vector 
$\lfloor \lambda_1(l) \rfloor \boldsymbol e_1 +\cdots + \lfloor \lambda_s(l) \rfloor \boldsymbol e_s$ 
has positive coordinates. For every $l\geq l_0$, set 
$$
\a_l := \lfloor \lambda_1(l) \rfloor \boldsymbol e_1 +\cdots + \lfloor \lambda_s(l) \rfloor 
\boldsymbol e_s \in V_+^\perp \, .
$$
The sequence $(\a_l)_{l \geq l_0}$ agrees with the asymptotic \eqref{eq:bounded2}, 
but not necessarily with the order $\preceq$. 
Let $e:= \sum_{i=1}^s \Vert \boldsymbol e_i \Vert$ and let $\theta>0$ denote the minimum of the 
modules of the coordinates of the vector $\Theta$. If $l_1 \geq l_0$ and $l_2 \geq l_1 + 2e/\theta$, then   \eqref{eq:approximationTheta} implies that $\a_{l_1} \preceq \a_{l_2}$. Set $b:= \lceil 2e/\theta \rceil$. 
Let us define the sequence $(\k_l)_{l \in \N} \subset \N^r$ by setting 
$$
\k_{l_0+lb+j} := \a_{l_0+lb} 
$$
for $l \in \N$ and $0 \leq j < b$, and  $\k_l = \boldsymbol 0$ for $l < l_0$. Then the sequence 
$(\k_l)_{l \in \N}$ has all the required properties. 
\end{proof}

 \section{Hilbert's Nullstellensatz and relation matrices}\label{sec: matrix}
 
 In this section, we gather some preliminary results needed for proving Theorem \ref{thm: families}. 
 In particular, we introduce the so-called \emph{relation matrices} and study some of their properties. 
 
 \medskip
 
 Let us fix some notation. 
 Let $r$, and $m_1,\ldots,m_r$, be some positive integers, and let $d_1,\ldots,d_r$ 
 be some nonnegative integers. 
 Set $M=m_1+\cdots+m_r$ and 
 let $\bmu_1,\ldots,\bmu_t$ denote an enumeration of all distinct 
 $M$-tuples 
$\bmu:=(\mu_{1,1},\ldots,\mu_{1,m_1},\mu_{2,1},\ldots,\mu_{r,m_r})$  
such that 
$$
\mu_{i,1}+\cdots + \mu_{i,m_i} = d_i\, ,
$$
for every $i$, $1\leq i \leq r$. 
 For every $i$, $1 \leq i \leq r$, we let $B_i$ denote an 
$m_i \times m_i$ matrix with coefficients in some commutative ring $\R$, 
and we let $B=B_1\oplus \cdots \oplus B_r$. 
Given a vector of indeterminates $\X=(\X_1,\ldots,\X_r)$, 
where $\X_i=(X_{i,1},\ldots,X_{i,m_i})$,  we note that $(B(\X))^{\bmu_j}\in \R[\X]$ is a 
homogeneous polynomial of degree $d_i$ in each set of variables $\X_i$. 
We let $R_{j,l}(B)$ denote the elements of $\R$ defined by 
\begin{equation}\label{eq:RjlB}
\left(B(\X)\right)^{\bmu_j} = \sum_{l=1}^t R_{j,l}(B)\X^{\bmu_l} \,.
\end{equation}
For every $j$ and $l$, $R_{j,l}(B)$ is a polynomial of degree $d:=\max\{d_1,\ldots,d_r\}$ in 
the coefficients of 
the matrix $B$. Set $R(B):=(R_{j,l}(B))_{1\leq j,l\leq t}$.  Let $B_1$ and $B_2$ be two  
$M\times M$ block diagonal matrix. Then 
\begin{equation}\label{eq: RAB}
R(B_1B_2)=R(B_1)R(B_2)
\end{equation} 
 and 
\begin{equation}\label{eq: RB-1}
R(B_1)^{-1}=R(B_1^{-1})
\end{equation} 
when $B_1$ is invertible.

All along this section, we continue with the notation of Section \ref{subsec: not}.  
We let $A_\k(\z)\in {\rm GL}_{m}(\Q(\z))$ and $f_{1,1}(\z),\ldots,f_{r,m_r}(\z)\in \Q\{\z\}$ be 
as in \eqref{eq: blockcompact}. We fix some $\balpha:=(\balpha_1,\ldots,\balpha_r)\in (\Q^\star)^{N}$  such that $\balpha_i\in (\Q^\star)^{n_i}$ is regular with respect to the matrix $A_i$. 
We also fix  a sequence of integer vectors $(\k_l)_{l\in \N}$ satisfying the assumptions of Lemma \ref{lem:choixkl}.    
Then, for every $\k \in \N^r$, we  set 
\begin{equation}\label{eq: rkz}
\bR_\k(\z):=R(A_\k(\z))\, .
\end{equation}
This is a $t\times t$-matrix with coefficients in $\Q(\z)$. 
One has $\bR_{\boldsymbol 0}(\z) = {\rm I}_t$ and, given $\k_1,\k_2 \in \N^r$, it follows from 
\eqref{eq: RAB} that 
\begin{equation}\label{eq:Rk1k2}
\bR_{\k_1+\k_2}(\z)= \bR_{\k_1}(\z)\bR_{\k_2}(T_{\k_1}\z) \,.
\end{equation}
 Furthermore, 
since the points $\balpha_i$ are assumed to be regular, we infer from \eqref{eq: RB-1} that 
the matrix $\bR_\k(\balpha)$ is well-defined and invertible for all $\k \in \N^r$, with inverse equal to 
$R(A_\k(\balpha)^{-1})$.

Let $\Y:=(y_{i,j})_{1\leq i,j\leq t}$ denote a matrix of indeterminates.  
Given a field $\mathbb K$ and a nonnegative integer $\delta_1$, we let $\mathbb K[\Y]_{\delta_1}$ 
denote the set of polynomials of degree at most $\delta_1$ in every indeterminate  
$y_{i,j}$.  Given two nonnegative integers $\delta_1$ and $\delta_2$, we let 
 $\mathbb K[\Y,\z]_{\delta_1,\delta_2}$ denote the set of polynomials $P\in \mathbb K[\Y,\z]$ of degree at most  
$\delta_1$ in every indeterminate $y_{i,j}$ and of total degree at most $\delta_2$ in the indeterminates $z_{i,j}$.  
 By Theorem \ref{thm:lemmedezero}, a polynomial $P\in\Q(\z)[\Y]$ is well-defined at the point  
$(\bR_{\k_l}(\balpha), T_{\k_l} \balpha)$ for all $l$ in a full subset of $\N$.  
Set  
\begin{equation*}
\I := \{P \in \Q(\z)[\Y] \ : P(\bR_{\k_l}(\balpha), T_{\k_l} \balpha)= 0\, , 
\; \forall l \mbox{ in a full subset of $\N$}\} \, .
\end{equation*}
Piecewise syndetic, negligible, and full sets are introduced in Definition \ref{def:full}. 

\subsection{Estimates for the dimension of certain vector spaces}

Let $\delta_1$ and $\delta_2$ be two nonnegative integers. Set 
$\mathcal I(\delta_1) := \mathcal I \cap  \Q(\z)[\Y]_{\delta_1}$ and 
$\I(\delta_1,\delta_2):=\I \cap \Q[\Y,\z]_{\delta_1,\delta_2}$. 
Note that  $\I(\delta_1,\delta_2)$ is a vector subspace of $ \Q[\Y,\z]_{\delta_1,\delta_2}$, and   
let $\I^{\perp}(\delta_1,\delta_2)$ denote a complement to  $\I(\delta_1,\delta_2)$ in 
$ \Q[\Y,\z]_{\delta_1,\delta_2}$.

\begin{lem}\label{dimensionespace} Let $d(\delta_1,\delta_2)$ denote the dimension of  
$\I^\perp(\delta_1,\delta_2)$ over $\Q$. 
There exists a positive real number $c_1(\delta_1)$, that does not depend on $\delta_2$, such that 
$$
d(\delta_1,\delta_2) \sim c_1(\delta_1)\delta_2^N \,, \mbox{ as $\delta_2$ tends to infinity. }
$$
\end{lem}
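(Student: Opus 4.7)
The plan is to realize $d(\delta_1,\delta_2)$ as the filtered Hilbert function of a suitable finitely generated $\Q[\z]$-module and then to pin down its Krull dimension as $N$ by means of the vanishing theorem of Section~\ref{sec: vanishing}.

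My first step is to observe that, although $\I$ is defined as an a priori unstructured union of subspaces, the piece $\I(\delta_1,\delta_2)$ is a genuine $\Q$-vector subspace cut out by a single full set. For any full $S\subset\N$, set
\[
V_S:=\{P\in\Q[\Y,\z] : P(\bR_{\k_l}(\balpha),T_{\k_l}\balpha)=0\ \forall\,l\in S\}\,,
\]
so that $\I\cap\Q[\Y,\z]=\bigcup_{S\text{ full}}V_S$. Since $\Q[\Y,\z]_{\delta_1,\delta_2}$ is finite-dimensional, one may choose a full $S_0$ maximizing $\dim_\Q(V_{S_0}\cap\Q[\Y,\z]_{\delta_1,\delta_2})$. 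For any full $S$, the set $S\cap S_0$ is full by Lemma~\ref{lem:syndetique2}(ii), and $V_{S\cap S_0}$ contains both $V_{S_0}$ and $V_S$; maximality then forces $V_{S\cap S_0}\cap\Q[\Y,\z]_{\delta_1,\delta_2}=V_{S_0}\cap\Q[\Y,\z]_{\delta_1,\delta_2}$, and hence $V_S\cap\Q[\Y,\z]_{\delta_1,\delta_2}\subset V_{S_0}\cap\Q[\Y,\z]_{\delta_1,\delta_2}$. Setting $J:=\I\cap\Q[\Y,\z]$, which is an ideal of $\Q[\Y,\z]$, this gives $\I(\delta_1,\delta_2)=J\cap\Q[\Y,\z]_{\delta_1,\delta_2}$.

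Next, I fix $\delta_1$ and set $W:=\Q[\Y]_{\delta_1}\otimes_\Q\Q[\z]\subset \Q[\Y,\z]$, a free $\Q[\z]$-module of rank $(\delta_1+1)^{t^2}$ with basis $\{\Y^\alpha : 0\le \alpha_{i,j}\le \delta_1\}$. The submodule $J_{\delta_1}:=J\cap W$ produces a finitely generated $\Q[\z]$-module $M:=W/J_{\delta_1}$, endowed with the good filtration $M_d$ defined as the image in $M$ of $\Q[\Y]_{\delta_1}\otimes_\Q\Q[\z]_d$. By the previous step, $d(\delta_1,\delta_2)=\dim_\Q M_{\delta_2}$. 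The standard Hilbert--Samuel theory for a finitely generated module over $\Q[z_1,\dots,z_N]$ equipped with the degree filtration then yields a polynomial $p$ with $\dim_\Q M_d=p(d)$ for $d$ large, of degree equal to the Krull dimension of $M$ and with positive leading coefficient — provided $M\neq 0$, which holds here since $1\in W$ does not lie in $J$ (the constant polynomial $1$ takes the value $1$ at every evaluation point).

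The main obstacle is to show that the Krull dimension of $M$ equals $N$, i.e.\ that $\mathrm{ann}_{\Q[\z]}(M)=(0)$. This is exactly where the vanishing theorem enters. Suppose $q\in\Q[\z]$ annihilates $M$; applying $q$ to the class of $1\in W$ gives $q\in J_{\delta_1}\subset J$, so there exists a full $S\subset\N$ with $q(T_{\k_l}\balpha)=0$ for every $l\in S$. Viewing $q$ as an element of $\Q\{\z\}$, Theorem~\ref{thm:lemmedezero} asserts that if $q\neq 0$ the set $\{l\in\N : q(T_{\k_l}\balpha)=0\}$ is negligible, contradicting the fullness of $S$. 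Therefore $q=0$, whence $M$ has Krull dimension equal to that of $\Q[\z]$, namely $N$, and we conclude $d(\delta_1,\delta_2)\sim c_1(\delta_1)\delta_2^N$ with $c_1(\delta_1)>0$, as desired.
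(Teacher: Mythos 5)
Your proof is correct, but it follows a genuinely different route from the paper's. The paper works over $\Q(\z)$: it first shows, via a determinant argument combined with Theorem \ref{thm:lemmedezero}, that $\I(\delta_1)$ is a \emph{proper} subspace of $\Q(\z)[\Y]_{\delta_1}$; it then writes down explicit linear equations $\sum_j b_{i,j}(\z)p_j(\z)=0$ characterizing membership in $\I(\delta_1)$ and obtains the asymptotic by counting, multi-degree by multi-degree in $\z$, the linearly independent convolution conditions these impose on the coefficients of a polynomial in $\Q[\Y,\z]_{\delta_1,\delta_2}$. You instead realize $d(\delta_1,\delta_2)$ as the Hilbert--Samuel function of the finitely generated $\Q[\z]$-module $M=W/J_{\delta_1}$ with its degree filtration, so that eventual polynomiality in $\delta_2$ comes for free, and the whole lemma collapses to the single statement $\mathrm{ann}_{\Q[\z]}(M)=(0)$, which you extract from Theorem \ref{thm:lemmedezero} in essentially the same way the paper extracts properness of $\I(\delta_1)$. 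Two small remarks. First, your opening reduction to a single full set $S_0$ is not really needed: $\I(\delta_1,\delta_2)=J\cap\Q[\Y,\z]_{\delta_1,\delta_2}$ holds by definition, and the only structural input you use is that $J_{\delta_1}$ is a $\Q[\z]$-submodule of $W$, which follows from $\I$ being an ideal (a finite intersection of full sets is full; this is the easy part of Lemma \ref{lem:idealI}, stated later in the paper but logically independent of the present lemma). Second, if you wish to avoid invoking the identification of the degree of the Hilbert--Samuel polynomial with the Krull dimension for this degree-type filtration, you can pin the degree down directly: eventual polynomiality together with $\dim_\Q M_{\delta_2}\leq (\delta_1+1)^{t^2}\binom{\delta_2+N}{N}$ bounds the degree by $N$, while $J\cap\Q[\z]=(0)$ makes $q\mapsto q\cdot\overline{1}$ injective on $\Q[\z]_{\leq\delta_2}$, whence $\dim_\Q M_{\delta_2}\geq\binom{\delta_2+N}{N}$ and the degree is exactly $N$ with positive leading coefficient. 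On balance, your argument trades the paper's elementary but somewhat delicate count (in particular the assertion that the equations attached to distinct generic multi-indices $\gamm$ contribute independently) for a single clean application of standard filtered-module theory; both proofs rest on the vanishing theorem as the sole arithmetic input.
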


\begin{proof}
Let $\Y^{\bnu_1},\ldots,\Y^{\bnu_h}$, $h:=(\delta_1+1)^{t^2}$, denote an enumeration of the 
monomials of degree at most $\delta_1$ 
in every indeterminates  $y_{i,j}$. Let  
$P_1,\ldots,P_h$ be polynomials in $\I(\delta_1)$. 
Every $P_i$ has a unique decomposition of the form    
$$
P_i(\Y,\z):= \sum_{j=1}^h p_{i,j}(\z) \Y^{\bnu_j}\, ,
$$
where $p_{i,j}(\z) \in \Q(\z)$, $1 \leq i,j \leq h$. 
Let $C(\z)$ denote the square matrix defined by $C(\z):=(p_{i,j}(\z))_{1\leq i,j \leq h}$. 
By Theorem \ref{thm:lemmedezero}, $C(\z)$ is well-defined at $T_{\k_l}\balpha$ 
for all $l$ in a full set $\mZ_0 \subset \N$. 
For every $i$, $1\leq i \leq h$, let $\mZ_i$ denote the set of nonnegative integers $l$ such that 
 $P_i(\bR_{\k_l}(\balpha), T_{\k_l} \balpha)=0$. Since $P_1,\ldots,P_h \in \I(\delta_1)$, 
 the sets $\mZ_1,\ldots,\mZ_h$ are full.   By Lemma \ref{lem:syndetique2}, 
 the set $\mZ: = \bigcap_{i=0}^h \mZ_i$ is full. For every $l \in \mZ$, one has  
\begin{equation}\label{eq: matnoy}
C(T_{\k_l} \balpha)\left(\begin{array}{c} \bR_{\k_l}(\balpha)^{\bnu_1} \\ \vdots \\
\bR_{\k_l}(\balpha)^{\bnu_h}\end{array}\right)=
 \left(\begin{array}{c} P_1(\bR_{\k_l}(\balpha), T_{\k_l} \balpha) \\ \vdots \\ 
P_h(\bR_{\k_l}(\balpha), T_{\k_l} \balpha)\end{array}\right)=0\, .
\end{equation}
Since the matrix 
$\bR_{\k_l}(\balpha)$ 
is invertible, it is nonzero. 
Hence the vector  $(\bR_{\k_l}(\balpha)^{\bnu_1},\ldots,\bR_{\k_l}(\balpha)^{\bnu_h})$ is also nonzero, 
and we  deduce from \eqref{eq: matnoy} that $\det C(T_{\k_l} \balpha)=0$ for all $l \in \mZ$. 
Since $\mZ$ is not negligible, Theorem \ref{thm:lemmedezero} implies that  
$\det C(\z)=0$. Hence $\I(\delta_1)$  is a strict subspace of $ \Q(\z)[\Y]_{\delta_1}$, 
say of dimension $d<h$. 
There thus exist polynomials $b_{i,j}(z) \in \Q[\z]$, $1\leq i \leq h-d$, $1\leq j \leq h$, such that 
for all $p_1(\z),\ldots,p_h(\z) \in \Q(\z)$:
\begin{equation}\label{eq: conditionI}
\sum_{j=1}^h p_j(\z)\Y^{\bnu_j} \in \I(\delta_1) \Leftrightarrow \sum_{j=1}^h b_{i,j}(\z)p_j(\z) = 0 
\;\;\;\; \forall i,\; 1 \leq i \leq h-d\, .
\end{equation}
Since $d+ (h-d) = h$, these equations are linearly independent.  
Now, let us consider a polynomial 
$P :=\sum_{j=1}^h p_j(\z)\Y^{\bnu_j} \in \Q[\Y,\z]_{\delta_1,\delta_2}$ and set  
$$
p_j(\z) = \sum_{\lambd\,:\,|\lambd|\leq \delta_2} p_{j,\lambd} \z^\lambd\ \text{ and }\ b_{i,j}(\z)=\sum_{\kapp\, :\, |\kapp|\leq \delta_1'} b_{i,j,\kapp} \z^\kapp\, ,
$$
where $\delta_1'$ is a nonnegative integer depending only on $\delta_1$, and where  the numbers 
$p_{j,\lambd}$ and $b_{i,j,\kapp}$ are algebraic for all quadruples $(i,j,\lambd, \kapp)$. 
By  \eqref{eq: conditionI},  $P$ belongs to $ \I(\delta_1,\delta_2)$ if and only if 
$$
\sum_{j=1}^h \sum_{\substack{|\lambd|\leq \delta_2, |\kapp|\leq \delta_1' \\ \lambd + \kapp=\gamm }} b_{i,j,\kapp}  p_{j,\lambd} = 0 \, , \;\;\;\; \forall (\gamm,i)\, .
$$
If we fix the vector $\gamm$, then, as soon as every coordinate of  
 $\gamm$ is larger than $\delta_1'$ and that $\vert \gamm \vert\leq \delta_2$, 
 the number of linearly independent equations does not 
depend on our choice for $\gamm$. 
Thus, 
the total number of linearly independent equations defining the vector space 
$\I(\delta_1,\delta_2)$ is equivalent to  
$$
c_1(\delta_1)\delta_2^N\, ,
$$
when $\delta_2$ tends to infinity, where $c_1(\delta_1)$ is a positive real number depending 
only on  $\delta_1$. This number is precisely the dimension  
of the vector space $\I^\perp(\delta_1,\delta_2)$, which ends the proof. 
\end{proof}

\begin{lem}\label{lem:majorationespacepolynome}
For every pair of nonnegative integers $(\delta_1,\delta_2)$, one has 
$$
\dim \I^{\perp}(2\delta_1,\delta_2) \leq 2^{t^2}\dim \I^{\perp}(\delta_1,\delta_2) \,.
$$
\end{lem}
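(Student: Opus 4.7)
The plan is to exploit a simple splitting of $\Y$-monomials that brings in the factor $2^{t^2}$. Every monomial $\Y^{\bnu}$ of degree at most $2\delta_1$ in each $y_{i,j}$ admits a canonical decomposition $\Y^{\bnu}=\Y^{\bnu'}\Y^{\bnu''}$ with $\bnu'\in\{0,\delta_1\}^{t^2}$ and $\Y^{\bnu''}$ of degree at most $\delta_1$ in each $y_{i,j}$: for each coordinate, set $\nu'_{i,j}=0,\nu''_{i,j}=\nu_{i,j}$ when $\nu_{i,j}\leq\delta_1$, and $\nu'_{i,j}=\delta_1,\nu''_{i,j}=\nu_{i,j}-\delta_1$ otherwise. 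Consequently, any $P\in\Q[\Y,\z]_{2\delta_1,\delta_2}$ decomposes uniquely as
$$
P(\Y,\z)=\sum_{\bnu'\in\{0,\delta_1\}^{t^2}}\Y^{\bnu'}P_{\bnu'}(\Y,\z),\qquad P_{\bnu'}\in\Q[\Y,\z]_{\delta_1,\delta_2}.
$$

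Next, I would fix a basis $\mathcal{B}$ of the complement $\I^{\perp}(\delta_1,\delta_2)$ and introduce
$$
\mathcal{J}:=\operatorname{span}_{\Q}\bigl\{\Y^{\bnu'}Q\;:\;\bnu'\in\{0,\delta_1\}^{t^2},\;Q\in\mathcal{B}\bigr\}\subseteq \Q[\Y,\z]_{2\delta_1,\delta_2}.
$$
By construction, $\dim \mathcal{J}\leq 2^{t^2}\dim \I^{\perp}(\delta_1,\delta_2)$. The central claim is then
$$
\Q[\Y,\z]_{2\delta_1,\delta_2}=\I(2\delta_1,\delta_2)+\mathcal{J},
$$
which immediately implies the target inequality, since $\I^{\perp}(2\delta_1,\delta_2)$ injects into the quotient $\Q[\Y,\z]_{2\delta_1,\delta_2}/\I(2\delta_1,\delta_2)$ and the latter is therefore spanned by the image of $\mathcal{J}$.

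To verify the claim, I would take $P\in\Q[\Y,\z]_{2\delta_1,\delta_2}$, write $P=\sum_{\bnu'}\Y^{\bnu'}P_{\bnu'}$ as above, and split each $P_{\bnu'}=P_{\bnu'}^{(1)}+P_{\bnu'}^{(2)}$ with $P_{\bnu'}^{(1)}\in\I(\delta_1,\delta_2)$ and $P_{\bnu'}^{(2)}\in\I^{\perp}(\delta_1,\delta_2)$. Then $\sum_{\bnu'}\Y^{\bnu'}P_{\bnu'}^{(2)}\in\mathcal{J}$, while each $\Y^{\bnu'}P_{\bnu'}^{(1)}$ lies in $\I(2\delta_1,\delta_2)$: the degree count is immediate, and the vanishing $(\Y^{\bnu'}P_{\bnu'}^{(1)})(\bR_{\k_l}(\balpha),T_{\k_l}\balpha)=\bR_{\k_l}(\balpha)^{\bnu'}\cdot 0=0$ holds on the full set of integers $l$ on which $P_{\bnu'}^{(1)}$ vanishes. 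Since only finitely many such full sets are involved, their intersection remains full by Lemma \ref{lem:syndetique2}, so the sum $\sum_{\bnu'}\Y^{\bnu'}P_{\bnu'}^{(1)}$ lies in $\I(2\delta_1,\delta_2)$, which closes the argument.

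This plan is essentially obstruction-free; the only subtlety is to pick the monomial splitting so that precisely $2^{t^2}$ auxiliary monomials $\Y^{\bnu'}$ suffice, rather than the cruder $(2\delta_1+1)^{t^2}/(\delta_1+1)^{t^2}$ that a naive ambient-dimension comparison would produce.
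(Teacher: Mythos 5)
Your proof is correct and follows essentially the same route as the paper: the same splitting of each monomial $\Y^{\bnu}$ with exponents at most $2\delta_1$ into a $\delta_1$-th power of one of the $2^{t^2}$ squarefree $\Y$-monomials times a monomial with exponents at most $\delta_1$, followed by the observation that this induces a surjection onto $\Q[\Y,\z]_{2\delta_1,\delta_2}/\I(2\delta_1,\delta_2)$ from a space of dimension at most $2^{t^2}\dim\I^{\perp}(\delta_1,\delta_2)$. Your explicit check that $\Y^{\bnu'}\cdot\I(\delta_1,\delta_2)\subset\I(2\delta_1,\delta_2)$ via intersecting finitely many full sets is a detail the paper leaves implicit (it follows from $\I$ being an ideal), but the argument is the same.
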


\begin{proof}
Let  $P(\Y,\z)\in \Q[\Y,\z]_{2\delta_1,\delta_2}$.  Such a polynomial has a decomposition of the form 
\begin{equation}\label{eq: pl}
P(\Y,\z)=\sum_{\ell=1}^{t^2} e_\ell(\Y)^{\delta_1}P_\ell(\Y,\z) \,,
\end{equation}
where we let $e_\ell(\Y)$ denote the $2^{t^2}$ monomials of degree at most one in the indeterminates 
$y_{i,j}$, and where each $P_\ell(\Y,\z)$ belongs to $\Q[\Y,\z]_{\delta_1,\delta_2}$. If, in such a decomposition,  
every polynomial $P_\ell$ belongs to $\I(\delta_1,\delta_2)$, then $P$ belongs to $\I(2\delta_1,\delta_2)$. 
The decomposition \eqref{eq: pl} naturally defines a surjective linear map from  
$\left(\Q[\Y,\z]_{\delta_1,\delta_2}/ \I(\delta_1,\delta_2)\right)^{2^{t^2}}$ to $\Q[\Y,\z]_{2\delta_1,\delta_2}/ \I(2\delta_1,\delta_2)$. 
It follows that  
$$
\dim_{\Q} \I^{\perp}(2\delta_1,\delta_2) \leq 2^{t^2} \dim_{\Q} \I^{\perp}(\delta_1,\delta_2)\, ,
$$
as wanted. 
\end{proof}

\subsection{Nullstellensatz and relation matrices} 

In this section, we show how Hilbert's Nullstellensatz allows us to exhibit a matrix $\bphi$, 
called a \emph{relation matrix}, whose coordinates are all algebraic over $\Q(\z)$, 
and which encodes the algebraic relations over $\Q(\z)$ 
of degree at most $d_i$ in each variables between the functions   
$f_{1,1}(\z),\ldots,f_{r,m_r}(\z)$. These relation matrices are the cornerstone of the proof 
of Theorem 
\ref{thm: families}.

We first prove the following lemma. 

\begin{lem}\label{lem:idealI}
The set $\mathcal I$ is a radical ideal of $\Q(\z)[\Y]$. 
\end{lem}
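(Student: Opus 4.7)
The plan is to verify directly from the definition that $\mathcal I$ is closed under addition and under multiplication by arbitrary elements of $\overline{\mathbb Q}(\z)[\Y]$, and then that it is closed under taking radicals. The crucial technical point, which recurs in each verification, is that a polynomial $P\in\overline{\mathbb Q}(\z)[\Y]$ only makes sense evaluated at $(\bR_{\k_l}(\balpha),T_{\k_l}\balpha)$ provided the denominators appearing in the coefficients of $P$ do not vanish at $T_{\k_l}\balpha$. I would first check that this is a mild constraint: the product of all those denominators is a nonzero element of $\overline{\mathbb Q}[\z]$, so by Theorem \ref{thm:lemmedezero} (applied to $g$ equal to that product) the set of $l$ for which some denominator vanishes is negligible; equivalently, the set of $l$ on which $P$ is well-defined at $(\bR_{\k_l}(\balpha),T_{\k_l}\balpha)$ is full.

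Next, for the ideal axioms, given $P_1,P_2\in\mathcal I$ and $Q\in\overline{\mathbb Q}(\z)[\Y]$, let $\mathcal Z_1,\mathcal Z_2$ be the full sets witnessing the vanishing of $P_1$ and $P_2$, and let $\mathcal Z_Q$ be the full set, produced as above, on which $Q$ (and hence all three polynomials simultaneously) is well-defined. By Lemma \ref{lem:syndetique2}(ii) the set $\mathcal Z_1\cap\mathcal Z_2\cap\mathcal Z_Q$ is still full, and on it both $P_1-P_2$ and $QP_1$ vanish at $(\bR_{\k_l}(\balpha),T_{\k_l}\balpha)$. Hence $P_1-P_2,\,QP_1\in\mathcal I$, so $\mathcal I$ is an ideal.

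For the radical property, suppose $P^n\in\mathcal I$ for some $n\geq 1$, so $P^n$ vanishes at $(\bR_{\k_l}(\balpha),T_{\k_l}\balpha)$ on a full set $\mathcal Z$. Again by Theorem \ref{thm:lemmedezero} applied to the product of the denominators of the coefficients of $P$, there is a full set $\mathcal Z'$ on which $P$ itself is well-defined at these points. On $\mathcal Z\cap\mathcal Z'$, which is full by Lemma \ref{lem:syndetique2}(ii), one has the equality $P(\bR_{\k_l}(\balpha),T_{\k_l}\balpha)^n=0$ in the field $\overline{\mathbb Q}$, hence $P(\bR_{\k_l}(\balpha),T_{\k_l}\balpha)=0$, so $P\in\mathcal I$. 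I do not expect a serious obstacle here; the only nontrivial ingredients are the observation that nonvanishing of denominators is guaranteed on a full set via Theorem \ref{thm:lemmedezero}, and the stability of fullness under finite intersections provided by Lemma \ref{lem:syndetique2}.
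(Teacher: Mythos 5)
Your proof is correct and follows essentially the same route as the paper: verify the ideal axioms and the radical property directly from the definition, using Theorem \ref{thm:lemmedezero} to guarantee well-definedness on a full set and Lemma \ref{lem:syndetique2} for the stability of full sets under finite intersections. The only cosmetic difference is that the paper isolates the well-definedness remark just before defining $\I$ rather than inside the proof.
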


\begin{proof} 
Checking that $\mathcal I$ is an ideal of $\Q(\z)[\Y]$ is not difficult. If $P_1,P_2 \in \I$, and if 
$\mZ_1$ (resp.\ $\mZ_2$) are full sets of nonnegative integers  $l$ for which 
$P_1(\bR_{\k_l}(\balpha),T_{\k_l}\balpha)=0$ (resp.\ $P_2(\bR_{\k_l}(\balpha),T_{\k_l}\balpha)=0$), 
then $P_1+P_2$ vanishes at the points $(\bR_{\k_l}(\balpha),T_{\k_l}\balpha)$ for all 
$l \in \mZ_1 \cap \mZ_2$. By Lemma \ref{lem:syndetique2}, this set is full. Hence $P_1+P_2 \in \I$. 

 Now let $P_1 \in \I$ and $P_2\in \Q(\z)[\Y]$. On the one hand, $P_1(\bR_{\k_l}(\balpha),T_{\k_l}\balpha)=0$ 
 for all $l$ in a full set $\mZ_1$, while, on the other hand, Theorem \ref{thm:lemmedezero} ensures that 
$P_2(\Y,\z)$ is well-defined at $(\bR_{\k_l}(\balpha),T_{\k_l}\balpha)$ for all nonnegative integers 
$l$ 
outside a negligible set $\mZ_2$. We deduce that 
$$
P_1(\bR_{\k_l}(\balpha),T_{\k_l}\balpha)P_2(\bR_{\k_l}(\balpha),T_{\k_l}\balpha)=0
$$
for all $l \in \mZ_1 \setminus \mZ_2$. By Lemma \ref{lem:syndetique2}, this is a full set. Hence $P_1P_2 \in \I$.

Let $P\in \Q(\z)[\Y]$ be such that $P^r \in \I$ for some $r$. If $l$ is a nonnegative integer such that  $P(\bR_{\k_l}(\balpha),T_{\k_l}\balpha)^r =0$, then $P(\bR_{\k_l}(\balpha),T_{\k_l}\balpha)=0$.  
Hence $P \in \I$ and 
$\I$ is a radical ideal. 
\end{proof}

Let $\mathbb K$ denote an algebraic closure of $\Q(\z)$. 

\begin{lem}\label{lem: phi1}
There exists a matrix $\bphi \in  {\rm GL}_t(\mathbb K)$ such that 
$$
P(\bphi,\z) = 0\, ,
$$ 
for all polynomials $P \in \I$.
\end{lem}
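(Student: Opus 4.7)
The plan is to produce $\bphi$ as a common zero of $\I$ in $\mathbb K^{t^2}$ via Hilbert's Nullstellensatz, with the additional property that $\det(\bphi) \neq 0$. Since Lemma \ref{lem:idealI} already furnishes the radicality of $\I$, the task reduces to checking that $\det(\Y)$ does not belong to $\I$, so that the variety of $\I$ is not contained in $\{\det = 0\}$.

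To verify $\det(\Y) \notin \I$, I would invoke the regularity assumption. Since $\balpha_i$ is regular with respect to \eqref{eq:mahleriT}, the matrix $A_i(T_i^j \balpha_i)$ is well-defined and invertible for every $j \geq 0$; consequently $A_{i,k}(\balpha_i)$, and hence the block-diagonal matrix $A_\k(\balpha)$, is invertible for every $\k \in \N^r$. The identity \eqref{eq: RB-1} then gives that $\bR_\k(\balpha) = R(A_\k(\balpha))$ is itself invertible, so $\det(\bR_{\k_l}(\balpha)) \neq 0$ for every $l \in \N$. The set
$$
\{ l \in \N : \det(\bR_{\k_l}(\balpha)) = 0 \}
$$
is thus empty, hence certainly not full, and therefore $\det(\Y) \notin \I$. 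The same reasoning trivially gives $1 \notin \I$, so $\I$ is a proper ideal.

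To finish, I would apply the Rabinowitsch trick: introduce an auxiliary indeterminate $w$ and consider the ideal
$$
\J := \I + (w \det(\Y) - 1) \subset \Q(\z)[\Y, w],
$$
where $\I$ is tacitly extended to $\Q(\z)[\Y,w]$. If $\J$ were the unit ideal, then some power $\det(\Y)^n$ would lie in $\I$; but $\I$ is radical, so this would force $\det(\Y) \in \I$, contradicting the previous paragraph. Hence $\J$ is proper, and by Hilbert's weak Nullstellensatz applied over the algebraically closed field $\mathbb K$, there exists $(\bphi, w_0) \in \mathbb K^{t^2} \times \mathbb K$ annihilating every element of $\J$. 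The relation $w_0 \det(\bphi) = 1$ forces $\det(\bphi) \neq 0$, so $\bphi$ lies in ${\rm GL}_t(\mathbb K)$, while by construction $P(\bphi, \z) = 0$ for every $P \in \I$.

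The only substantive point is the linkage between the functional hypothesis of regularity of $\balpha_i$ and the pointwise invertibility of the matrices $\bR_{\k_l}(\balpha)$; once this elementary connection is in hand, the Nullstellensatz in Rabinowitsch form does all the remaining work.
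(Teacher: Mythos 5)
Your proof is correct and follows essentially the same route as the paper: both arguments rest on the radicality of $\I$ (Lemma \ref{lem:idealI}), the observation that $\det \Y \notin \I$ because the matrices $\bR_{\k_l}(\balpha)$ are invertible for every $l$ while full sets are nonempty, and Hilbert's Nullstellensatz over $\mathbb K$. The only cosmetic difference is that you unwind the strong Nullstellensatz into the explicit Rabinowitsch trick, whereas the paper cites it directly to place $\det\Y$ in the radical of $\I$ and derive the same contradiction.
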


\begin{proof}
Let us consider the affine algebraic set $\mathcal V$ associated with the radical ideal $\I$. That is,   
$$
{\mathcal V}:=\{\bphi \in \mathcal M_t(\mathbb K)\ : \ P(\bphi,\z) = 0 \,, \; \forall P \in \I \}\, .
$$
According to the weak form of Hilbert's Nullstellensatz (see, for instance, \cite[Theorem 1.4, p.\ 379]{Lang}),  
$\mathcal V$ is nonempty as soon as  
$\I$ is a proper ideal of $\Q(\z)[\Y]$. But the definition of $\mathcal I$ clearly implies that nonzero constant polynomials do not belong to $\mathcal I$. Hence $\mathcal V$ is nonempty.  

Now, let us assume by contradiction that $\det \bphi=0$ for all $\bphi$ in $\mathcal V$. 
By Hilbert's Nullstellensatz (see, for instance, \cite[Theorem 1.5, p.\ 380]{Lang}), 
the polynomial $\det \Y$  belongs to the radical of the ideal  
$\I$. Hence  $\det \Y\in \I$  for $\I$ is radical. Thus,  $\det \bR_{\k_l}(\balpha)=0$ 
for all $l$ in a full set. This provides a contradiction since $\bR_{\k_l}(\balpha)$ is invertible 
for all $l$ in $\N$. We thus deduce that there exists an invertible matrix $\bphi$ 
in $\mathcal V$, as wanted. 
\end{proof}

\begin{defi} A matrix $\bphi \in  {\rm GL}_t(\mathbb K)$ satisfying the property of Lemma 
\ref{lem: phi1} is called a \emph{relation matrix}. 
\end{defi}

The next lemma plays a central role in the proof of Theorem \ref{thm: families}.

\begin{lem}\label{lem: phi2}
Let $\bphi \in  {\rm GL}_t(\mathbb K)$ 
be a relation matrix.  
Then  
$$
P\left(\bphi \bR_\k(\z),T_\k\z\right) = 0\, ,
$$ 
for all $P \in \I$ and  all $\k \in V_+^\perp$. 
\end{lem}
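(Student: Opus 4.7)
The plan is to reduce the claim to an ideal-membership question and then handle the combinatorics carefully. Define the auxiliary polynomial
\[
Q(\Y,\z) \; := \; P\bigl(\Y\,\bR_\k(\z),\,T_\k\z\bigr) \; \in \; \Q(\z)[\Y].
\]
Since $\bphi$ is a relation matrix, Lemma~\ref{lem: phi1} tells us that $Q(\bphi,\z)=0$ will follow automatically once we show that $Q\in\I$. And $Q(\bphi,\z)=0$ is literally the asserted identity $P(\bphi\,\bR_\k(\z),T_\k\z)=0$. So the task reduces to proving $Q\in\I$.

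To check that $Q\in\I$, one evaluates $Q$ at the orbit points used in the definition of $\I$. The cocycle identity~\eqref{eq:Rk1k2}, $\bR_{\k_1+\k_2}(\z)=\bR_{\k_1}(\z)\,\bR_{\k_2}(T_{\k_1}\z)$, combined with the commutativity $T_\k T_{\k_l}=T_{\k+\k_l}$ (both sides being direct sums of commuting powers of the $T_i$), yields
\[
Q\bigl(\bR_{\k_l}(\balpha),T_{\k_l}\balpha\bigr)
\;=\;P\bigl(\bR_{\k_l}(\balpha)\,\bR_\k(T_{\k_l}\balpha),\,T_\k T_{\k_l}\balpha\bigr)
\;=\;P\bigl(\bR_{\k_l+\k}(\balpha),\,T_{\k_l+\k}\balpha\bigr).
\]
Hence $Q\in\I$ boils down to the vanishing of $P\bigl(\bR_{\k_l+\k}(\balpha),T_{\k_l+\k}\balpha\bigr)$ for $l$ in a full subset of $\N$.

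The next observation is that the shifted sequence $\k'_l:=\k_l+\k$ still lies in $V_+^\perp$ (which is stable under addition), is monotone in $\preceq$ (inherited from $(\k_l)$), and satisfies $\k'_l=l\Theta+\mathcal O(1)$. Consequently $(\k'_l)$ is itself a valid sequence in the sense of Lemma~\ref{lem:choixkl}, and in particular Theorem~\ref{thm:lemmedezero} is available for it.

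The final and most delicate step is the transfer of the hypothesis $P\in\I$ — vanishing of $P$ along $(\k_l)$ in a full subset of $\N$ — into a full-subset vanishing along $(\k'_l)$. My plan is to exploit the piecewise-constant block structure of $(\k_l)$ provided by Lemma~\ref{lem:choixkl} (the sequence is constant on blocks $[l_0+lb,l_0+lb+b)$ of length $b$) together with Brown's pigeonhole property (Lemma~\ref{lem:syndetique}\,(ii)), applied to the finitely many values of $\k_l-l\Theta$ appearing on any fixed finite collection of blocks, in order to extract a piecewise-syndetic subset of $\N$ on which $\k_l+\k$ coincides with $\k_{l+c}$ for a single bounded integer $c$. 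The preservation of full sets under the bounded index-translation $l\mapsto l+c$ (property~(iii) of Lemma~\ref{lem:syndetique}) then transports the full vanishing set of $P$ for $(\k_l)$ into the required full vanishing set for $(\k'_l)$, and Theorem~\ref{thm:lemmedezero} applied to the sequence $(\k'_l)$ absorbs the remaining negligible error. I expect this combinatorial transfer to be the main obstacle, because in general $\k_l+\k$ is \emph{not} a literal reindexing of $(\k_l)$, and making the match between the two sequences precise is exactly where the interplay between the asymptotic rate $\Theta$, the lattice $V_+^\perp$, and the partition-regularity of piecewise-syndetic sets has to be carried out.
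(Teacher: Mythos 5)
Your opening reduction is correct and is exactly how the paper's proof ends: setting $\sigma_\k(P):=P(\Y\bR_\k(\z),T_\k\z)$, the cocycle identity \eqref{eq:Rk1k2} gives $\sigma_\k(P)(\bR_{\k_l}(\balpha),T_{\k_l}\balpha)=P(\bR_{\k_l+\k}(\balpha),T_{\k_l+\k}\balpha)$, and once $\sigma_\k(P)\in\I$ is known, Lemma \ref{lem: phi1} yields the conclusion. But the entire content of the lemma is the claim $\sigma_\k(\I)\subset\I$, and the combinatorial transfer you propose for it does not work, for two separate reasons. First, for a general $\k\in V_+^\perp$ there need not exist \emph{any} index $l'$ with $\k_{l'}=\k_l+\k$: by \eqref{eq:bounded2} every difference $\k_{l'}-\k_l$ lies within $\mathcal O(1)$ of the ray $\mathbb R\Theta$, whereas $V_+^\perp$ is the set of nonnegative lattice points of a sublattice whose real span merely \emph{contains} $\Theta$; when that lattice has rank $\geq 2$, $V_+^\perp$ contains vectors at unbounded distance from $\mathbb R\Theta$, and for such $\k$ no reindexing of the orbit exists at all. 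Second, even for $\k$ that is realizable as a difference $\k_{l+c}-\k_l$, the set of $l$ on which this coincidence holds for a \emph{fixed} $c$ is at best piecewise syndetic, never full (the rounding in Lemma \ref{lem:choixkl} varies with $l$). Intersecting it with the full vanishing set of $P$ therefore only shows that $\sigma_\k(P)$ vanishes along $(\k_l)$ on a piecewise syndetic set, which does not place it in $\I$: membership in $\I$ requires vanishing on a full set, and Theorem \ref{thm:lemmedezero} cannot "absorb" the gap, since the complement of a piecewise syndetic set can itself be piecewise syndetic.

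These two obstructions are precisely what the paper's proof is built to overcome, with machinery absent from your sketch. It clears denominators so as to work in the Noetherian ring $\A[\Y]$ (rational functions with no pole along the orbit), and for each piecewise syndetic $\mZ$ considers the ideal $\I_\mZ$ of polynomials vanishing at \emph{every} $l\in\mZ$. Noetherianity produces a minimal $\I_{\mZ_0}$; minimality is what converts "vanishing on a piecewise syndetic subset of $\mZ_0$" back into membership in $\I_{\mZ_0}$. One then shows $\sigma_\k(\I_{\mZ_0})\subset\I_{\mZ_0}$ and the converse implication for increments $\k$ in a set $\K$ of actually realized differences, proves that $\K$ generates the whole lattice $V^\perp$, and deduces stability under $\sigma_\k$ for \emph{arbitrary} $\k\in V_+^\perp$ by writing $\k$ as a $\Z$-combination of elements of $\K$ (this is how the vectors far from $\mathbb R\Theta$ are reached). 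A final step upgrades $\I_{\mZ_0}$ to vanishing for all $l\geq l_0$, hence $\I_{\mZ_0}\subset\I$. Without an argument of this kind your proof cannot be completed as proposed.
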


\begin{proof} Let $\A$ denote the subring of $\Q(\z)$ formed by all rational functions with no pole  
at the points $T_{\k}\balpha$, $\k \in \N^r$. The set $\mathcal S$ of polynomials $P\in \Q[\z]$ 
that does not vanish at any of the points $T_\k \balpha$, $\k \in \N^r$, being multiplicatively closed, 
the ring $\A$ is the localization of $\Q[\z]$ at $\mathcal S$, \textit{i.e.}, $\A=\S^{-1}\Q[\z]$. 
It follows that $\A$ is a 
Noetherian  ring (see, for instance, \cite[Proposition 1.6, p.\ 415]{Lang}). 
Since by assumption each of the points $\balpha_1,\ldots,\balpha_r$ is regular with respect to the 
corresponding Mahler system \eqref{eq:mahleriT}, the coefficients of the matrices 
$\bR_\k(\z)$, $\k \in \N^r$, belong to the ring $\A$.  
With any piecewise syndetic set $\mZ \subset \N$,  we associate the set 

$$
\I_\mZ:=\left\{P \in \A[\Y]\, : \, P(\bR_{\k_l}(\balpha),T_{\k_l}\balpha) = 0\,, \;  \forall l \in \mZ \right\}\, .
$$

The proof is divided into the following seven simple results, namely Facts \ref{aff:ideal} to \ref{aff:inclusionideaux}.   

\begin{Fact}\label{aff:ideal}
The set  $\I_\mZ$ is an ideal of $\A[\Y]$.
\end{Fact}

Let $P_1,P_2 \in \I_\mZ$. Then $P_1+P_2$ vanishes at $(\bR_{\k_l}(\balpha),T_{\k_l}\balpha)$  
for all $l \in \mZ$. Hence $P_1+P_2 \in \I_\mZ$.  Let $P_1 \in \I_\mZ$ and $P_2\in \A[\Y]$. 
Then $P_1(\bR_{\k_l}(\balpha),T_{\k_l}\balpha)=0$ for all $l \in \mZ$. On the other hand, 
by definition of  $\A$, 
 $P_2(\Y,\z)$ has no pole at $(\bR_{\k_l}(\balpha),T_{\k_l}\balpha)$, $l\in \N$. It follows that 
 $P_1P_2$ vanishes at $(\bR_{\k_l}(\balpha),T_{\k_l}\balpha)$ for all $l \in \mZ$. Hence $P_1P_2 \in \I_\mZ$, which proves Fact \ref{aff:ideal}. 

\medskip

If $\mZ'$ is a piecewise syndetic set such that $\mZ' \subset \mZ \subset \N$, one has 
$\I_\mZ \subset \I_{\mZ'}$. Since $\A$ is Noetherian,  $\A[\Y]$ is Noetherian too 
and any increasing sequence of ideals is stationary. 
Thus, for every piecewise syndetic set $\mZ \subset \N$, there exists a 
piecewise syndetic set $\mZ_0 \subset \mZ$ such that $\I_{\mZ_1}=\I_{\mZ_0}$ for all piecewise syndetic sets  
$\mZ_1 \subset \mZ_0$. For Facts \ref{aff:infinitek} to \ref{aff:inclusionideaux}, we fix such a pair of sets   
$(\mZ,\mZ_0)$.

\begin{Fact}\label{aff:infinitek}
There exist infinitely many $r$-tuples $\k \in V_+^\perp$ such that 
\begin{equation}
\label{eq:defL0}
\mZ_0(\k):=\{l \in \mZ_0\ : \ \exists l'\in \mZ_0, \; \k_{l'}=\k_l + \k \}
\end{equation}
is a piecewise syndetic set. 
\end{Fact}

Let $B$ be a bound for $\mZ_0$ and $e\geq 0$ be an integer. By Lemma \ref{lem:syndetique}, 
 the set 
$$
\mZ_e:=\{l \in \mZ_0\ : \ \exists l' \in \mZ_0, e \leq l'-l \leq e+B\}
$$
is piecewise syndetic. By \eqref{eq:bounded2}, there are only finitely many differences $\k_{l'}-\k_l$ 
for which $e \leq l'-l \leq e+B$. Furthermore, by Lemma \ref{lem:choixkl}, such differences all belong to $V_+^\perp$. 
Let $\K_e \subset V_+^\perp$ denote the finite set formed by these differences. Then 
$$
\mZ_e\subset \cup_{\k \in \K_e} \mZ_0(\k)\, .
$$
By Lemma \ref{lem:syndetique}, at least one of the sets $\mZ_0(\k)$, $\k \in \K_e$, is piecewise syndetic. 
Letting $e$ tend to infinity, this proves Fact \ref{aff:infinitek}. 

\medskip 

Now, let 
$$
\K:=\{\k \in V_+^\perp : \ \mZ_0(\k)\, \mbox{ is piecewise syndetic}\}\, 
$$
denote this infinite set.

\begin{Fact}
\label{aff:Zmodule}
The $\Z$-module generated by $\K$ in $\Z^r$ is equal to $V^\perp$.
\end{Fact}

Let $W$ denote the $\Z$-module generated by $\K$ in $\Z^r$. It is enough to show that $W^\perp$, its orthogonal complement  in $\Z^r$, is equal to  $V$. Since $W \subset V^\perp$, we have  
$V\subset W^\perp$. Let us prove the reverse inclusion. Let $\lambd \in W^\perp$. Then $\lambd$ is orthogonal to all $\k\in \K$. 
By construction, $\K$ remains at bounded distance from $\mathbb R\Theta$. 
Renormalizing and taking the limit for large vectors $\k\in \K$, 
we get that $\lambd$ is orthogonal to $\Theta$. Hence, $\lambd \in V$.  
This proves Fact \ref{aff:Zmodule}. 

\medskip

Given $\k \in V_+^\perp$, we define an action from the monoid $V_+^\perp$ to  $\A[\Y]$ by: 
$$
\sigma_\k:\left\{\begin{array}{rcl} \A[\Y]& \rightarrow & \A[\Y]
\\
P(\Y,\z) & \mapsto & P(\Y \bR_\k(\z),T_\k\z)\, .
\end{array}
\right.
$$
Note that the map $\sigma_\k$ is well-defined. Indeed, we already observed that  the coordinates of 
$\bR_\k(\z)$ belong 
to the ring $\A$ for all $\k \in \N^r$. 

\begin{Fact}
\label{aff:actionsigmak}
For all  $\k \in \K$, $\sigma_\k(\I_{\mZ_0}) \subset \I_{\mZ_0}$. 
\end{Fact}

Let $P \in \I_{\mZ_0}$, $\k \in \K$, and $l \in \mZ_0(\k)$. Let $l' \in \mZ_0$  be such that 
$\k_{l'}=\k+\k_l$. Then, we have  
\begin{eqnarray*}
\sigma_\k(P)(\bR_{\k_l}(\balpha),T_{\k_l}(\balpha))&=&P(\bR_{\k_l}(\balpha)\bR_{\k}(T_{\k_l}\balpha),T_\k T_{\k_l}\balpha)
\\ &= &P(\bR_{\k_{l}+\k}(\balpha),T_{\k+\k_{l}}\balpha)
\\ &= &P(\bR_{\k_{l'}}(\balpha),T_{\k_{l'}}\balpha)
\\& = &0\, .
\end{eqnarray*}
Thus,  $\sigma_\k(P)$ belongs to the ideal $\I_{\mZ_0(\k)}$, which  is equal to $\I_{\mZ_0}$ by minimality. 
This proves Fact \ref{aff:actionsigmak}.

\begin{Fact}
\label{aff:actionreciproquesigmak}
Let $P \in \I_{\mZ_0}$ be such that $P=\sigma_\k(Q)$ for some $Q \in \A[\Y]$ and $\k\in \K$. 
Then $Q \in \I_{\mZ_0}$.
\end{Fact}

Since $\mZ_0(\k)$ is piecewise syndetic, we infer from \eqref{eq:bounded2} that the set 
$$
\mZ_0(\k)^{-1}:=\{l' \in \mZ_0\ : \  \exists l \in \mZ_0, \,\k_{l'}=\k_l + \k\} 
$$
is also piecewise syndetic. By minimality, we obtain $\I_{\mZ_0(\k)^{-1}}=\I_{\mZ_0}$. 
Let $l' \in \mZ_0(\k)^{-1}$ and let $l \in \mZ_0$ be such that $\k_{l'}:= \k_{l} + \k$.  
Then 
\begin{eqnarray*}
Q(\bR_{\k_{l'}}(\balpha),T_{\k_{l'}}\balpha)&=&Q(\bR_{\k_{l}+\k}(\balpha),T_{\k+\k_{l}}\balpha)
\\ &=& Q(\bR_{\k_l}(\balpha)\bR_{\k}(T_{\k_l}\balpha),T_\k T_{\k_l}\balpha)
\\ &=&
\sigma_\k(Q)(\bR_{\k_{l}}(\balpha),T_{\k_{l}}(\balpha))
\\ &=&P(\bR_{\k_{l}}(\balpha),T_{\k_{l}}(\balpha))
\\ &=&0\, .
\end{eqnarray*}
Thus, $Q \in \I_{\mZ_0(\k)^{-1}}=\I_{\mZ_0}$.  
This 
proves Fact \ref{aff:actionreciproquesigmak}.

\begin{Fact}\label{aff:stabVperp}
For all $\k \in V_+^\perp$,  $\sigma_\k (\I_{\mZ_0}) \subset \I_{\mZ_0}$. 
\end{Fact}

Let $\k \in V_+^\perp$ and $P \in \I_{\mZ_0}$. 
By Fact \ref{aff:Zmodule}, there is a decomposition of the form  
$$
\k:=\a_1+\cdots+\a_u-\a_{u+1}-\cdots - \a_v
$$
with $\a_1,\ldots,\a_v \in \K$.  
Using recursively Fact \ref{aff:actionsigmak} with $\sigma_{\a_1},\ldots,\sigma_{\a_u}$, 
we deduce that 
$\sigma_{\a_1+\cdots+\a_u}(P) \in \I_{\mZ_0}$. 
On the other hand, we have 
$
\sigma_{\a_{u+1}+\cdots +\a_v}(\sigma_{\k}(P))=\sigma_{\a_1+\cdots+\a_u}(P)\in \I_{\mZ_0}\, .
$
Using recursively Fact \ref{aff:actionreciproquesigmak} with $\sigma_{\a_{u+1}},\ldots,\sigma_{\a_{v}}$, 
we obtain that $\sigma_{\k}(P) \in \I_{\mZ_0}$. 
This proves Fact \ref{aff:stabVperp}.

\begin{Fact}\label{aff:inclusionideaux}
One has  $\I_{\mZ_0} \subset \I$.
\end{Fact}

Let $l_0\in \N$ denote the smallest element in $\mZ_0$. 
Let $P \in \I_{\mZ_0}$ and let $l \geq l_0$ be an integer. 
By Lemma \ref{lem:choixkl},  
$\k_{l}-\k_{l_0} \in V_+^\perp$.  
Set $Q:=\sigma_{\k_{l}-\k_{l_0}}(P)$. 
By Fact \ref{aff:stabVperp}, we obtain that $Q \in \I_{\mZ_0}$.
Since $l_0 \in \mZ_0$, we have 
$$
P(\bR_{\k_l}(\balpha),T_{\k_l}\balpha) = Q(\bR_{\k_{l_0}}(\balpha),T_{\k_{l_0}}\balpha)=0\, .
$$
Hence $P$ vanishes at $(\bR_{\k_l}(\balpha),T_{\k_l}\balpha)$, 
for all $l \geq l_0$.  
This proves Fact \ref{aff:inclusionideaux}.

\medskip 

We are now ready to conclude the proof of Lemma \ref{lem: phi2}. 
Let $P \in \I$, $\bphi \in  {\rm GL}_t(\mathbb K)$ be a relation 
matrix, and $\k \in V_+^\perp$. Let $b(\z) \in \Q[\z]$ be a nonzero polynomial such that 
$b(\z)P(\Y,\z) \in \Q[\z,\Y]$. In particular, $b(\z)P(\Y,\z)\in \I \cap \A[\Y]$.  Let $\mZ$ be the set of 
 integers $l\geq 0$ for which $b(T_{\k_l}\balpha)P(\bphi\bR_{\k_l}(\balpha),T_{\k_l}\balpha)=0$. 
Since $bP \in \I$, $\mZ$ is full and hence piecewise syndetic. There thus exists a piecewise syndetic set 
 $\mZ_0 \subset \mZ$ satisfying Facts \ref{aff:infinitek} to \ref{aff:inclusionideaux}. By definition,  
 $b(\z)P(\Y,\z)\in \I_{\mZ_0}$. By Fact \ref{aff:stabVperp}, we also have 
 $\sigma_\k(bP) \in \I_{\mZ_0}$, for all $\k \in V_+^\perp$. By Fact \ref{aff:inclusionideaux}, we deduce that 
 $\sigma_\k (bP) \in \I$. Then, we infer from Lemma \ref{lem: phi1} that  
$$
b(T_\k\z)P(\bphi\bR_\k(\z),T_\k\z)= \sigma_\k (bP)(\bphi,\z)=0\, . 
$$
Since $T_\k$ is nonsingular and $b(\z)\not=0$, we obtain  
$P(\bphi\bR_\k(\z),T_\k\z)=0$. 
\end{proof}

\subsection{Analyticity of relation matrices}
 
 Let $\bphi$ be a relation matrix. 
All coordinates of $\bphi$ being algebraic over $\Q(\z)$,  they 
generate a finite extension of $\Q(\z)$. Let $\mathbf k\subset \mathbb K$ denote this extension 
and let $\gamma\geq 1$ be the degree of $\mathbf k$.   
Choosing a primitive element $\phi$ in $\mathbf k$,  
we obtain a decomposition of the form  
\begin{equation}\label{eq:decompositionPhi}
\bphi= \sum_{j=0}^{\gamma-1}  \bphi_j(\z) \phi^{j}\, ,
\end{equation}
where the matrices $\bphi_j(\z)$, $0 \leq j \leq \gamma-1$, have coefficients in $\Q(\z)$. 
The field $\mathbb K$ is {\it a priori} an abstract algebraic closure of $\Q(\z)$, but we can easily  
reduce the situation to the case where the coordinates of $\bphi$ are analytic at some 
suitable point  $T_{\k_{l_0}}\balpha$.  

\begin{lem}
\label{lem:zeroMasserextended} We continue with the previous notation.  
There exist an integer $l_0\geq 0$, a neighborhood $\mathcal V$ of $T_{\k_{l_0}}\balpha$, and a  
function $\varphi(\z)$ that is analytic on $\mathcal V$ and algebraic over $\Q(\z)$ such that  
 the following properties holds.  

\begin{itemize}

\item[{\rm (a)}] $\Vert T_{\k_{l_0}}\balpha\Vert<1$. 

\item[{\rm (b)}] $T_{\k_{l_0}}\balpha$ belongs to the disc of convergence of 
$f_{1,1}(\z),\ldots,f_{r,m_r}(\z)$. 

\item[{\rm (c)}] The matrix  
\begin{equation*}
\bphi(\z):= \sum_{j=0}^{\gamma-1}  \bphi_j(\z) \varphi(\z)^{j}  \in  {\rm GL}_t(\mathcal Mer(\mathcal V))\, 
\end{equation*}
is a relation matrix. That is, it satisfies Lemmas \ref{lem: phi1} and \ref{lem: phi2}.

\item[{\rm (d)}] For every $j$, $1\leq j\leq \gamma-1$, the coordinates of the matrix $\bphi_j(\z)$ 
are analytic on $\mathcal V$ and the 
matrix $\bphi(T_{\k_{l_0}}\balpha)$ is invertible. 
\end{itemize}
\end{lem}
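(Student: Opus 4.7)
The strategy is to replace the abstract primitive element $\phi$ by one of its analytic branches, chosen among all $\gamma$ conjugates so as to preserve invertibility after evaluation at a suitably chosen point $T_{\k_{l_0}}\balpha$. Let $\Pi(\z,X)\in\Q(\z)[X]$ be the minimal polynomial of $\phi$ over $\Q(\z)$; since we are in characteristic zero, $\Pi$ is separable, so its discriminant $\Delta(\z)\in\Q(\z)$ is a nonzero rational function. Let $b(\z)\in\Q[\z]$ be a common denominator for all the coefficients of $\Pi$, all entries of the matrices $\bphi_0(\z),\ldots,\bphi_{\gamma-1}(\z)$, and of $\Delta$. Now view $\sum_{j=0}^{\gamma-1}\bphi_j(\z)X^j$ as a matrix of polynomials in $X$ over $\Q(\z)$; its determinant is a polynomial $D(\z,X)\in\Q(\z)[X]$, and the product of $D(\z,\varphi)$ over the $\gamma$ conjugates of $\phi$ equals $N(\z):={\rm Nm}_{\mathbf k/\Q(\z)}(\det \bphi)\in \Q(\z)$. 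Since $\bphi\in{\rm GL}_t(\mathbb K)$, the norm $N(\z)$ is nonzero.

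The first step is to choose $l_0$. Let $p(\z)\in\Q[\z]$ be a nonzero polynomial that is a common denominator for $b(\z)$, $N(\z)$, and $1/N(\z)$ (multiplied out), so that in particular $p$ vanishes at $T_{\k_l}\balpha$ implies either $b$ or $N$ is ill-behaved there. By Theorem \ref{thm:lemmedezero} the set of $l\in\N$ such that $p(T_{\k_l}\balpha)=0$ is negligible; by Lemma \ref{lem:syndetique2}(iii) and (ii), its complement is full. Since the pairs $(T_i,\balpha_i)$ are admissible, Theorem \ref{thm:masser} gives $T_{\k_l}\balpha\to\boldsymbol 0$, so for all large enough $l$ one has $\|T_{\k_l}\balpha\|<1$ and $T_{\k_l}\balpha$ lies inside the common polydisc of convergence of $f_{1,1},\ldots,f_{r,m_r}$. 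Pick any $l_0$ in the intersection of these two full sets; then (a), (b), and the first half of (d) hold, and $\Delta(T_{\k_{l_0}}\balpha)\ne 0$, $N(T_{\k_{l_0}}\balpha)\ne 0$.

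The second step is to produce $\varphi$. Since $\Delta(T_{\k_{l_0}}\balpha)\ne 0$, the polynomial $\Pi(T_{\k_{l_0}}\balpha,X)$ has $\gamma$ simple roots $\xi_1,\ldots,\xi_\gamma$, and the implicit function theorem supplies $\gamma$ analytic functions $\varphi_1,\ldots,\varphi_\gamma$ on a neighborhood $\mathcal V$ of $T_{\k_{l_0}}\balpha$ with $\varphi_i(T_{\k_{l_0}}\balpha)=\xi_i$ and $\Pi(\z,\varphi_i(\z))\equiv 0$ on $\mathcal V$. For each $i$ set $\bphi^{(i)}(\z):=\sum_{j=0}^{\gamma-1}\bphi_j(\z)\varphi_i(\z)^j$; then $\prod_{i=1}^\gamma\det\bphi^{(i)}(T_{\k_{l_0}}\balpha)=N(T_{\k_{l_0}}\balpha)\ne 0$, so at least one index $i$ satisfies $\det\bphi^{(i)}(T_{\k_{l_0}}\balpha)\ne 0$. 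Choose that $\varphi:=\varphi_i$ and $\bphi(\z):=\bphi^{(i)}(\z)$; this secures (d) in full, up to shrinking $\mathcal V$ so that $\bphi(\z)$ is invertible on $\mathcal V$ as a meromorphic matrix.

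It remains to verify the relation-matrix property in (c). For any $P\in\I$, the quantity $Q(\z,X):=P\bigl(\sum_{j=0}^{\gamma-1}\bphi_j(\z)X^j,\z\bigr)$ is a polynomial in $X$ with coefficients in $\Q(\z)$ such that $Q(\z,\phi)=P(\bphi,\z)=0$ in $\mathbb K$. Since $\phi$ has minimal polynomial $\Pi(\z,X)$, this forces $\Pi(\z,X)\mid Q(\z,X)$ in $\Q(\z)[X]$. Consequently $Q(\z,\varphi(\z))=0$ identically on $\mathcal V$, i.e.\ $P(\bphi(\z),\z)=0$. Because the entries of $\bphi(\z)$ are algebraic over $\Q(\z)$, they embed into the fixed algebraic closure $\mathbb K$, and under this embedding $\bphi(\z)\in{\rm GL}_t(\mathbb K)$ is a relation matrix in the sense of Lemmas \ref{lem: phi1} and \ref{lem: phi2}.

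The main obstacle is the simultaneous availability of the four properties at the \emph{same} evaluation point: one needs the analyticity of the $\bphi_j$, non-degeneracy of the minimal polynomial (for the implicit function theorem), non-vanishing of the norm $N$ (to pick an invertible branch), and the ordinary admissibility data (disk of convergence, norm less than one). The key that unlocks this is the vanishing theorem of Section \ref{sec: vanishing}, which guarantees that any finite list of nonzero rational functions fails to vanish on $\{T_{\k_l}\balpha\}$ for a full set of $l$, so the four sets of admissible indices have a common element.
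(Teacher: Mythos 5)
Your proof is correct and follows essentially the same route as the paper: choose $l_0$ by applying the vanishing theorem (Theorem \ref{thm:lemmedezero}) to a finite product of nonzero polynomials (discriminant, denominators, and a certificate that $\det \bphi\neq 0$), then realize $\phi$ as an analytic branch via the implicit function theorem and transfer the relation-matrix property through the $\Q(\z)$-isomorphism $\Q(\z,\phi)\cong\Q(\z,\varphi(\z))$. The only cosmetic difference is the invertibility certificate: you use the norm ${\rm Nm}_{\mathbf k/\Q(\z)}(\det\bphi)$ and select a branch whose determinant does not vanish at $T_{\k_{l_0}}\balpha$, whereas the paper uses an algebraic relation $q_0(\z)=\sum_{i\geq 1}q_i(\z)(\det\bphi)^i$ with $q_0\neq 0$, which shows that every branch works; both arguments are valid.
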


\begin{proof} 
By Theorem \ref{thm:masser}, 
$\lim_{l\to\infty} T_{\k_l} \balpha =0$. This ensures that (a) and (b) are satisfied for all  
sufficiently large integers $l$.

Let $P(\z,y)\in \Q[\z,y]$ denote the minimal polynomial of $\phi$ 
and let $D(\z)\in \Q[\z]$ denote the discriminant of $P$, seen as a polynomial in the variable $y$. 
Since $\bphi$ is a relation matrix,  $\det \bphi$ is nonzero and algebraic over 
$\Q(\z)$.     
There thus exist polynomials $q_0(\z),\ldots,q_\nu(\z) \in \Q[\z]$,  
$q_0(\z)\not=0$, such that    
\begin{equation}\label{eq:minequation}
q_0(\z) = q_1(\z)\det \bphi+q_2(\z)\det \bphi^2+\cdots+q_\nu(\z)\det \bphi^\nu\, .
\end{equation}
Let $d(\z)$ be the least common multiple of the denominators of the coefficients of 
the matrices $\bphi_0(\z),\ldots, \bphi_{\gamma-1}(\z)$. 
Since the polynomial 
$D(\z)q_0(\z)d(\z)$ 
is nonzero, Theorem \ref{thm:lemmedezero} ensures the existence of a full set   
$\mathcal E\subset \mathbb N$ such that 
$$
D(T_{\k_l}\balpha)q_0(T_{\k_l}\balpha)d(T_{\k_l}\balpha)\not=0\, ,\; \forall l\in \mathcal E\,.
$$

Let $l_0\in \mathcal E$ be chosen large enough to guarantee 
that (a) and (b) hold. 
Since $D(T_{\k_{l_0}}\balpha)\not=0$, the 
implicit function theorem (see, for instance, \cite[Proposition 6.1, p.\ 138]{Cartan})  
implies that there exists a function $\varphi(\z)$ that is analytic 
on a neighborhood of $T_{\k_{l_0}}\balpha$, say $\mathcal V_0$, and such that 
$P(\z,\varphi(\z))=0$.  
Note that there is a 
$\Q(\z)$-isomorphism between the field  
$\Q(\z,\phi)$ and $\Q(\z,\varphi(\z))$.  We thus deduce that the matrix 
$\bphi(\z):= \sum_{j=0}^{\gamma-1}  \bphi_j(\z) \varphi(\z)^{j}$ 
satisfies the properties of Lemmas \ref{lem: phi1} and \ref{lem: phi2}. 
Furthermore, as $\det  \bphi\not= 0$,  
we also deduce that $\det  \bphi(\z)\not= 0$. Hence $\bphi(\z)\in  
{\rm GL}_t(\mathcal Mer(\mathcal V_0))$. Finally, we deduce that $\det  \bphi(\z)$  
 also satisfies Equation \eqref{eq:minequation}. 
Since $q_0(T_{\k_{l_0}}\balpha)\not=0$, we get that $\det \bphi(T_{\k_{l_0}}\balpha)\not=0$. Hence 
 the matrix $\bphi(T_{\k_{l_0}}\balpha)$ is invertible. Furthermore, since $d(T_{\k_{l_0}}\balpha)\not=0$, the coordinates of $\bphi_j(\z)$ 
are analytic on some neighborhood  of $T_{\k_{l_0}}\balpha$, say $\mathcal V_1$.  
Finally, setting $\mathcal V=\mathcal V_0\cap \mathcal V_1$,  we obtain that 
Properties (a)--(d) are satisfied. 
\end{proof}

\section{Proof of Theorem \ref{thm: families}}\label{sec: mainproof}

Let $P \in \Q[\X_1,\ldots,\X_r]$ be defined as in Theorem \ref{thm: families}. 
Let $d_i$ denote the total degree of $P$ in the indeterminates $\X_i$. 
Set $\X:=(\X_1,\ldots,\X_r)$. We keep on with the notation of the previous section. 
The monomials $\X^{\bmu_1},\ldots,\X^{\bmu_t}$ are precisely those which are of total degree $d_i$ in the indeterminates $\X_i$, for every $i$. Hence we have a decomposition of the form 
$$
P(\X):= \sum_{j=1}^t \tau_j \X^{\bmu_j}\, ,
$$
where $\tau_1,\ldots,\tau_t \in \Q$. 
Set $\btau:=(\tau_1,\ldots,\tau_t) \in \Q^t$ and let  $\X^\bmu \in \Q[\X]^t$ denote the column vector of indeterminates  whose coordinates are the monomials $\X^{\bmu_1},\ldots,\X^{\bmu_t}$. We write $P(\X)=\btau\X^\bmu$. Given a matrix of indeterminates 
$\Y:=(y_{i,j})_{1\leq i,j\leq t}$, we set 
\begin{equation}
\label{eq:defF}
F(\Y,\z):=\sum_{i,j} \tau_i y_{i,j} \f(\z)^{\bmu_j} = \btau \Y \f(\z)^\bmu \in \Q\{\z\}[\Y]\, ,
\end{equation}
where we let $\f(z) \in \Q\{\z\}^M$ denote the column vector formed by the functions 
$f_{1,1}(\z_1),\ldots,f_{r,m_r}(\z_r)$ and $\f(\z)^{\bmu} \in \Q\{\z\}^t$ denote the column vector formed by the  functions $\f(\z)^{\bmu_1},\ldots,\f(\z)^{\bmu_t}$. Note that $F$ is a linear form in $\Y$. 
Evaluating at $({\rm I}_t,\balpha)$, we obtain 
\begin{equation}\label{eq: annulationF}
F({\rm I}_t,\balpha)=\sum_{j=1}^t  \tau_j\f(\balpha)^{\bmu_j} =P(\f(\balpha)) = 0 \,. 
\end{equation}

\begin{rem}\label{rem: F} 
We have  $F(\Y,\z)  \in \Q[\Y,\f(\z)]\subset\Q\{\z\}[\Y]$. Also,  
$F(\Y,\z)$ can be seen as an element of $\Q[\Y][[\z]]$, as we will sometimes do in what follows. 
\end{rem}

\subsection{Iterated relations} 

Using Equality \eqref{eq: blockcompact} and \eqref{eq:RjlB}, we obtain 
\begin{equation}
\label{eq:blockcompactmonome}
\f(\z)^{\bmu_j} = \left(A_{\k}(\z)\f(T_\k\z)\right)^{\bmu_j} = \sum_{l=1}^t R_{j,l}(A_\k(\z))\f(T_\k\z)^{\bmu_l} \, ,
\end{equation}
 for every  $j$, $1\leq j \leq t$. 
We deduce from \eqref{eq:blockcompactmonome} that 
\begin{equation}
\label{eq:mahlerblock}
\f(\z)^{\bmu} = \bR_\k(\z) \f(T_\k\z)^{\bmu}\, ,
\end{equation}
for all $\k \in \N^r$, where $\bR_\k(\z)$ is defined as in \eqref{eq: rkz}. 

For every $i$, $1 \leq i \leq r$,  let $b_i(\z_i) \in\Q[\z_i]$ denote the least common multiple 
of the denominators of the  coordinates of $A_i(\z_i)$. Hence the matrix $b_i(\z_i)A(\z_i)$ 
has coefficients in $\Q[\z_i]$. For every $\k=(k_1,\ldots,k_r) \in \N^r$, we set 
$$
b_\k(\z):=\prod_{i=1}^r \prod_{j=0}^{k_i-1} b_i(T_i^{j}\z_i)^{d_i}\, ,
$$
so that the matrix $b_\k(\z)\bR_\k(\z)$ has coefficients in  $\Q[\z]$. 
For all $\k \in \N^r$, Equality \eqref{eq:mahlerblock} implies the following equality in $\Q\{\z\}[\Y]$:   
\begin{eqnarray}\label{eq: iterationF}
\nonumber F(\Y b_\k(\z),\z) &=& \btau \Y b_\k(\z) \f(\z)^\bmu \\
&=& \btau \Y b_\k(\z)\bR_\k(\z) \f(T_\k\z)^\bmu \\
\nonumber &=& F(\Y b_\k(\z)\bR_\k(\z),T_\k\z)\, .
\end{eqnarray}
Every point $\balpha_i$ being regular with respect to the system \eqref{eq:mahleriT}, 
the number $b_\k(\balpha)$ is nonzero for all  $\k \in \N^r$. 
From \eqref{eq: annulationF} and the fact that $F$ is linear in $\Y$, we deduce that  
\begin{equation}
\label{eq: annulationFitere}
F(\bR_{\k}(\balpha),T_{\k}\balpha)=0 \,, \; \forall \k \in \N^r\,.
\end{equation}

\subsection{The matrix $\bTheta_l(\z)$} 

From now on, we fix a nonnegative integer $l_0$ and a relation matrix $\bphi(\z)$ 
satisfying the properties of  Lemma 
\ref{lem:zeroMasserextended}.  Set  
$$
\bxi := T_{\k_{l_0}}\balpha \, .
$$
Properties (a) and (b) in Lemma \ref{lem:zeroMasserextended} ensure the existence of a 
real number $r_1$ such that $0<\Vert \bxi \Vert<r_1<1$ and such that all the power series 
$f_{1,1}(\z),\ldots,f_{r,m_r}(\z)$ have a radius of convergence larger than $r_1$.  
Then, by  Properties (c) and (d) in Lemma \ref{lem:zeroMasserextended}, we can choose a real number 
$r_2$ satisfying  $0< \Vert \bxi \Vert +r_2< r_1$ and such that the 
coefficients of the matrix $\bphi(\z)$ are analytic on the polydisc $\mathcal D(\bxi,r_2)$. 
Note that Property (a) in Lemma \ref{lem:zeroMasserextended} and  Lemma \ref{lem:choixkl} imply that 
$\Vert T_{\k_l}\balpha \Vert \leq \Vert T_{\k_{l_0}}\balpha \Vert$  for $l\geq l_0$. 
For every $l\geq l_0$, we set   
\begin{equation}\label{eq: theta}
\bTheta_l(\z):= \bR_{\k_{l_0}}(\balpha)\bphi(T_{\k_{l_0}}\balpha)^{-1}\bphi(\z)\bR_{\k_l-\k_{l_0}}(\z)\, .
\end{equation}
By \eqref{eq:Rk1k2}, we have  $\bTheta_l(\bxi) = \bR_{\k_l}(\balpha)$, for all $l\geq l_0$.

\begin{rem}\label{rem: thetak}
By Lemma \ref{lem:zeroMasserextended}, the coefficients of 
$\bTheta_{l_0}(\z)$ are analytic on the polydisc   
$\mathcal D(\bxi,r_2)$.  On the other hand, one has 
$$
\bTheta_l(\z)=\bTheta_{l-1}(\z)\bR_{\k_l-\k_{l-1}}(T_{\k_{l-1}-\k_{l_0}}\z)\,, \; \forall l\geq l_0\,.$$ 
This  implies that, for every $l\geq l_0$,  the coefficients of 
$\bTheta_{l}(\z)$ are analytic on some neighborhood of $\bxi$, that  is on some polydisc 
$\mathcal D(\bxi,r_l)\subset \mathcal D(\bxi,r_2)$.  
Also, the coefficients of 
$b_{\k_l-\k_{l_0}}(\z)\bTheta_l(\z)$ are analytic on the polydisc $ \mathcal D(\bxi,r_2)$.
In what follows, we will consider the expression  
$F(\bTheta_l(\z),T_{\k_l-\k_{l_0}}\z)$. Formally, it is a polynomial in 
$f_{1,1}(T_{\k_l-\k_{l_0}}\z),\ldots,f_{r,m_r}(T_{\k_l-\k_{l_0}}\z)$ and the coordinates of 
$\bTheta_l(\z)$. 
Note that it also defines an analytic functions on  
$\mathcal D(\bxi,r_l)\subset \mathcal D(\bxi,r_2)$. 
In addition, $F(\bTheta_{l_0}(\z),\z)$ is analytic on 
 $\mathcal D(\bxi,r_2)$. 
Indeed, the functions $f_{1,1}(\z),\ldots,f_{r,m_r}(\z)$ are analytic on 
$\mathcal D(\boldsymbol{0},r_1)\supset\mathcal D(\bxi,r_2)$, while our choice of  
$l_0$ ensures that the coordinates of   
$\bTheta_{l_0}(\z)$ are analytic on $\mathcal D(\bxi,r_2)$. 
\end{rem}

\subsection{The key lemma}\label{sec: cle}

In this section, we prove the following result from which we will deduce 
easily Theorem \ref{thm: families} in Section \ref{sec:endproof}.  

\begin{lem}
\label{lem:nulliteF}
One has  
$F(\bTheta_{l_0}(\z),\z)=0$.
\end{lem}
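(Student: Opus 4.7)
The function $G(\z) := F(\bTheta_{l_0}(\z), \z)$ is analytic on $\mathcal D(\bxi, r_2)$ by Remark \ref{rem: thetak}, and we aim to show $G \equiv 0$. The first step is an iteration identity: substituting $\Y = \bTheta_{l_0}(\z)$ into \eqref{eq: iterationF} with $\k = \k_l - \k_{l_0} \in V_+^\perp$, using the linearity of $F$ in $\Y$ and the factorization $\bTheta_l(\z) = \bTheta_{l_0}(\z)\bR_{\k_l-\k_{l_0}}(\z)$, yields $G(\z) = F(\bTheta_l(\z), T_{\k_l-\k_{l_0}}\z)$ as meromorphic identities on $\mathcal D(\bxi, r_2)$ for every $l \geq l_0$. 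Specializing to $\z = \bxi$, where $\bTheta_l(\bxi) = \bR_{\k_l}(\balpha)$ and $T_{\k_l-\k_{l_0}}\bxi = T_{\k_l}\balpha$, combined with \eqref{eq: annulationFitere}, yields the pointwise vanishing $G(\bxi) = 0$.

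The second step promotes this pointwise vanishing to identical vanishing via an arithmetic approximation. For each integer $N \geq 1$, set $F_N(\Y, \z) := \btau\,\Y\,\f_N(\z)^\bmu \in \Q[\Y, \z]$, where $\f_N(\z)^\bmu$ is the Taylor polynomial of $\f(\z)^\bmu$ of total degree less than $N$ around $\boldsymbol 0$. Using $F(\bR_{\k_l}(\balpha), T_{\k_l}\balpha) = 0$ from \eqref{eq: annulationFitere}, Cauchy estimates on $\f(\z)^\bmu$, combined with the singly-exponential growth $\|\bR_{\k_l}(\balpha)\| = e^{O(l)}$ and the doubly-exponential decay $\|T_{\k_l}\balpha\| \leq e^{-c e^l}$ from Lemma \ref{lem:uniformiteconvergencefamilles}, yield $|F_N(\bR_{\k_l}(\balpha), T_{\k_l}\balpha)| = O(e^{O(l) - N c e^l})$. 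Matched against the singly-exponential Weil height of this algebraic number, Liouville's inequality forces the exact vanishing $F_N(\bR_{\k_l}(\balpha), T_{\k_l}\balpha) = 0$ for all $l$ in a full subset of $\N$, once $N$ is sufficiently large. Hence $F_N \in \I$.

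The final step closes the loop. Applying Lemma \ref{lem: phi2} to $F_N$ and $\k = \k_l - \k_{l_0}$, translated to the analytic setting via Lemma \ref{lem:zeroMasserextended}, yields the meromorphic identity $F_N(\bphi(\z)\bR_{\k_l - \k_{l_0}}(\z), T_{\k_l-\k_{l_0}}\z) = 0$ on $\mathcal D(\bxi, r_2)$. Letting $N \to \infty$ with uniform convergence on compact subsets of $\mathcal D(\boldsymbol 0, r_1)$ and invoking \eqref{eq:mahlerblock}, we recover an analytic identity connecting $F$ to the relation matrix $\bphi$. Transferring this to the $\bTheta_l$-level via the constant left-multiplier $\bR_{\k_{l_0}}(\balpha)\bphi(\bxi)^{-1}$ and the iteration identity of the first step completes the argument: $F(\bTheta_l(\z), T_{\k_l-\k_{l_0}}\z) = 0$, hence $G \equiv 0$.

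The main obstacle is the closure step: transferring the vanishing through the constant matrix $C = \bR_{\k_{l_0}}(\balpha)\bphi(\bxi)^{-1}$, which does not a priori preserve the algebraic relations captured by $\bphi$. This is overcome by exploiting the full strength of Lemma \ref{lem: phi2} (applicable for all $\k \in V_+^\perp$, not just for $\k = \boldsymbol 0$) together with the admissibility hypotheses on the pairs $(T_i, \balpha_i)$, which ensure that the algebraic structure of the iterated matrices $\bR_{\k_l}(\balpha)$ aligns correctly with the generic relation matrix $\bphi$.
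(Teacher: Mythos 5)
Your argument breaks down at the second step, and the failure is exactly the one that the paper's auxiliary-function construction exists to circumvent. Writing $F_N(\Y,\z)=\btau\,\Y\,\f_N(\z)^{\bmu}$ for the degree-$N$ truncation, the analytic upper bound you get is
$\vert F_N(\bR_{\k_l}(\balpha),T_{\k_l}\balpha)\vert\leq e^{O(e^l)-cNe^l}$ (note in passing that $\Vert \bR_{\k_l}(\balpha)\Vert$ is $e^{O(e^l)}$, not $e^{O(l)}$: the matrices $A_i$ may have poles at the origin, which is why the paper carries the factors $b_{\k}$ and bounds them separately in \eqref{eq:minorationbk}). But $F_N$ has total degree of order $N$ in $\z$, and $\log H(T_{\k_l}\balpha)=O(e^l)$, so the Weil height of the algebraic number $F_N(\bR_{\k_l}(\balpha),T_{\k_l}\balpha)$ is $e^{O(Ne^l)}$ and Liouville's inequality only gives the lower bound $e^{-C N e^l}$ when that number is nonzero. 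Both exponents scale linearly in $N$, with constants of the same nature (both governed by $\log H(T_{\k_l}\balpha)\asymp -\log\Vert T_{\k_l}\balpha\Vert$), so increasing $N$ gains nothing and no contradiction is forced: you cannot conclude $F_N\in\I$. This is the classical obstruction of transcendence theory; the paper's proof overcomes it by constructing, via the dimension counts of Lemmas \ref{dimensionespace} and \ref{lem:majorationespacepolynome} and a box-principle argument (Lemma \ref{lem:fonctionauxiliaire}), a combination $E'=\sum_j P_jF^j$ with $\deg_{\z}P_j\leq\delta_2$ whose vanishing order $p\approx\delta_1^{1/N}\delta_2$ strictly exceeds its degree by the factor $\delta_1^{1/N}$; it is precisely this gain that makes the comparison between \eqref{eq: majprinc} and \eqref{eq:minorationE} contradictory for $\delta_1$ large.

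Your final ``closure'' step is also not an argument. Even granting $F_N\in\I$, Lemma \ref{lem: phi2} yields $F(\bphi(\z)\bR_{\k}(\z),T_{\k}\z)=0$ after letting $N\to\infty$, but what is needed is $F(C\,\bphi(\z)\bR_{\k}(\z),T_{\k}\z)=0$ with $C=\bR_{\k_{l_0}}(\balpha)\bphi(\bxi)^{-1}$. Since $F$ is a single linear form in $\Y$ rather than an ideal of relations stable under $\Y\mapsto C\Y$, the vanishing of $\btau\,\Y_0\,\f(\z)^{\bmu}$ at $\Y_0=\bphi(\z)\bR_{\k}(\z)$ says nothing about $\btau\,C\Y_0\,\f(\z)^{\bmu}$; invoking ``the full strength of Lemma \ref{lem: phi2}'' and ``admissibility'' does not address this. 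The paper absorbs the constant matrix $C$ from the start by replacing $\I(\delta_1,\delta_2)$ with the twisted ideal $\J(\delta_1,\delta_2)$ in the construction of the auxiliary function, which is why the identity \eqref{eq: thetazero} comes out directly at the level of $\bTheta_l(\z)$.
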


Let us first briefly describe the general strategy we use for proving this key lemma.   
The scheme of the proof is classical and takes its source in the early work of Mahler \cite{Ma30b}.  
It was gradually improved and refined by Kubota \cite{Ku77}, Loxton and van der Poorten \cite{LvdP82}, 
and Ku.\ Nishioka \cite{Ni94,Ni96,Ni_Liv}. It takes here a more complicate shape, involving the matrices 
$\bTheta_{l}$. This is due to the fact  that we have to deal with a bunch of Mahler systems of the form \eqref{eq:mahleriT} without any restriction on the matrices $A_i(\z_i)$. 

Assuming by contradiction that 
$F(\bTheta_{l_0}(\z),\z)\not=0$, we construct, for every triple of nonnegative integers  
$(\delta_1,\delta_2,l)$, $l\geq l_0$,  
an auxiliary function of the form 
$$
E(\bTheta_l(\z),T_{\k_l-\k_{l_0}}\z) = \sum_{j=0}^{\delta_1} P_j(\bTheta_l(\z),T_{\k_l-\k_{l_0}}\z)F(\bTheta_l(\z),T_{\k_l-\k_{l_0}}\z)^j \,,
$$
where $P_j(\Y,\z)$ is a polynomial of degree at most $\delta_1$ in each indeterminate $y_{i,j}$ 
and of total degree at most  $\delta_2$ in the indeterminates $z_{i,j}$. 
Recall that $E(\bTheta_l(\bxi),T_{\k_l-\k_{l_0}}\bxi)=E(R_{\k_l}(\balpha),T_{\k_l} \balpha)$ and that 
$$F(\bTheta_l(\bxi),T_{\k_l-\k_{l_0}}\bxi)=F(R_{\k_l}(\balpha),T_{\k_l} \balpha)=0\, .
$$ 
Hence $E(R_{\k_l}(\balpha),T_{\k_l} \balpha)=P_0(R_{\k_l}(\balpha),T_{\k_l} \balpha)$. 
As our construction ensures that $P_0\not\in \mathcal I$, we have 
$P_0(R_{\k_l}(\balpha),T_{\k_l} \balpha)\not=0$ for many integers $l$, and we can use Liouville's 
inequality to find a lower bound for $E(R_{\k_l}(\balpha),T_{\k_l} \balpha)$. 
On the other hand, we have many choices for the polynomials $P_i$ in the construction of 
our auxiliary function. This level of freedom is used to show that, 
for a good choice of such polynomials,   
the quantity $E(R_{\k_l}(\balpha),T_{\k_l} \balpha)$ is small enough. More precisely,  we 
obtain a contradiction between the upper and the lower bounds when the parameter $\delta_1$ is sufficiently large, 
the parameter $\delta_2$ is 
sufficiently large with respect to $\delta_1$, and the parameter $l$ is sufficiently large with respect to  
 $\delta_1$ and $\delta_2$.

\subsection*{Warning}
The auxiliary function $E(\bTheta_l(\z),T_{\k_l-\k_{l_0}}\z) $ can be though of as a 
simultaneous Pad\'e approximant of  type I for the first 
$\delta_1$th powers of  $F(\bTheta_l(\z),T_{\k_l-\k_{l_0}}\z)$. However, we have to be careful:  
$F(\bTheta_l(\z),T_{\k_l-\k_{l_0}}\z)$ it is not necessarily a power series in $\z$. It is a linear combination of the power series $f_{1,1}(T_{\k_l-\k_{l_0}}\z),\ldots,f_{r,m_r}(T_{\k_l-\k_{l_0}}\z)$ 
whose coefficients are only known to be algebraic over $\Q(\z)$. We only know that 
it is analytic at $\bxi$.

\medskip

 In what follows, we argue by contradiction, assuming that 
\begin{equation}\label{eq: notzero}
F(\bTheta_{l_0}(\z),\z)\not=0\,.
\end{equation}
We divide the proof of Lemma \ref{lem:nulliteF} into 
 four steps. 
 


\subsubsection{First step: construction of the auxiliary function}\label{prooflemmanullite}

Given a formal power series $E=\sum_{\lambd} e_\lambd(\Y)\z^\lambd \in \Q[\Y][[\z]]$ 
and an integer $p>0$, we let  
$$
E_p:=\sum_{\vert \lambd \vert < p} e_\lambd(\Y)\z^\lambd \in \Q[\Y,\z]
$$
denote the truncation of  $E$ at order $p$ with respect to $\z$. 
We recall that  
$\I^\perp(\delta_1,\delta_2)$ is a complement to  
$\I(\delta_1,\delta_2)$ in $\Q[\Y,\z]_{\delta_1,\delta_2}$. 

\begin{lem}
\label{lem:fonctionauxiliaire}
Let $\delta_1\geq 0$ be an integer. For all integers $\delta_2$, $\delta_2\gg \delta_1$, 
there exist polynomials $P_i\in \I^{\perp}(\delta_1,\delta_2)$, 
$0 \leq i \leq \delta_1$, not all zero,  
 such that the formal power series  
$$
E'(\Y,\z) := \displaystyle\sum_{j=0}^{\delta_1} P_j(\Y,\z)F(\Y,\z)^j \in \Q[\Y][[\z]]
$$ 
satisfies 
$E'_p(\bTheta_l(\z),T_{\k_l-\k_{l_0}}\z)=0$ for all $l\geq l_0$, where    
$p=\left\lfloor \frac{\delta_1^{1/N}\delta_2}{2^{(t^2+2)/N}}\right\rfloor$.
\end{lem}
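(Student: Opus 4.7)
The plan is a standard Siegel-lemma construction: I take the unknowns to be the coefficients of $P_0,\ldots,P_{\delta_1}$ in any fixed basis of $\I^{\perp}(\delta_1,\delta_2)$, and translate the family of desired vanishings into a single linear condition ``modulo $\I$''. Setting $C:=\bR_{\k_{l_0}}(\balpha)\bphi(\bxi)^{-1}$, the definition \eqref{eq: theta} reads $\bTheta_{l_0}(\z)=C\bphi(\z)$ and $\bTheta_l(\z)=C\bphi(\z)\bR_{\k_l-\k_{l_0}}(\z)$, so I impose the single algebraic condition
\[
\hat E'_p(\Y,\z):=E'_p(C\Y,\z)\in \I(2\delta_1,p-1).
\]
Since $E'=\sum_j P_jF^j$ is linear in the coefficients of the $P_j$'s and $F$ is linear in $\Y$, both $E'_p$ and $\hat E'_p$ lie in $\Q[\Y,\z]_{2\delta_1,p-1}$; projection onto $\I^{\perp}(2\delta_1,p-1)$ then yields a linear system in the unknowns.

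The counting is immediate. By Lemma~\ref{dimensionespace} the number of unknowns is
\[
(\delta_1+1)\,d(\delta_1,\delta_2)\sim (\delta_1+1)\,c_1(\delta_1)\,\delta_2^N,
\]
while by Lemmas~\ref{lem:majorationespacepolynome} and~\ref{dimensionespace} the number of equations is at most
\[
\dim_{\Q}\I^{\perp}(2\delta_1,p-1)\leq 2^{t^2}\,\dim_{\Q}\I^{\perp}(\delta_1,p-1)\sim 2^{t^2}\,c_1(\delta_1)\,p^N.
\]
With $p=\lfloor\delta_1^{1/N}\delta_2/2^{(t^2+2)/N}\rfloor$, one has $2^{t^2}p^N\leq \delta_1\delta_2^N/4$, so as soon as $\delta_2$ is large enough with respect to $\delta_1$ the number of unknowns strictly exceeds the number of equations, and basic linear algebra produces a nonzero tuple $(P_0,\ldots,P_{\delta_1})\in \I^{\perp}(\delta_1,\delta_2)^{\delta_1+1}$ for which $\hat E'_p\in\I$.

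Finally, I translate $\hat E'_p\in\I$ into the claimed identities. Lemma~\ref{lem: phi1} applied to $\hat E'_p$ gives $\hat E'_p(\bphi(\z),\z)=0$, equivalently $E'_p(\bTheta_{l_0}(\z),\z)=0$, which handles $l=l_0$. For $l>l_0$, Lemma~\ref{lem:choixkl} ensures $\k_l-\k_{l_0}\in V_+^{\perp}$, and Lemma~\ref{lem: phi2} applied to $\hat E'_p$ with this vector yields
\[
\hat E'_p\bigl(\bphi(\z)\bR_{\k_l-\k_{l_0}}(\z),\,T_{\k_l-\k_{l_0}}\z\bigr)=0,
\]
which rewrites as $E'_p(\bTheta_l(\z),T_{\k_l-\k_{l_0}}\z)=0$, as required. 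The main delicacy lies in the joint use of Lemmas~\ref{lem:majorationespacepolynome} and~\ref{dimensionespace}: doubling the $\Y$-degree costs only a factor $2^{t^2}$ in dimension, and combined with the extra factor $2^{2/N}$ built into $p$ this is precisely what fixes the exponent $(t^2+2)/N$ appearing in the statement.
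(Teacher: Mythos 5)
Your construction is correct and is essentially the paper's own proof: the paper imposes the identical condition (phrased there as $E'_p\in\J(2\delta_1,p-1)$, where $\J$ is the pullback of $\I$ under the substitution $\Y\mapsto C\Y$ with your matrix $C=\bR_{\k_{l_0}}(\balpha)\bphi(\bxi)^{-1}$), runs the same dimension count via Lemmas \ref{dimensionespace} and \ref{lem:majorationespacepolynome} with the same choice of $p$, and concludes with Lemmas \ref{lem: phi1} and \ref{lem: phi2} exactly as you do. The one loose point in your write-up --- the claim that $\hat E'_p$ still lies in $\Q[\Y,\z]_{2\delta_1,p-1}$, which can fail for $t\geq 2$ because $\Y\mapsto C\Y$ preserves column sums of exponents but not the degree in each individual $y_{i,j}$ --- is glossed over in precisely the mirror-image way in the paper (which asserts that $P\mapsto P(C\Y,\z)$ is an automorphism of $\Q[\Y,\z]_{\delta_1,\delta_2}$), and it affects at most the constant in $p$, not the structure of the argument.
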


\begin{proof}
Set  
$$
\J(\delta_1,\delta_2) := 
\{ P \in \Q[\z,\Y]\ : \ P(\bR_{\k_{l_0}}(\balpha)\bphi(T_{\k_{l_0}}\balpha)^{-1}\Y,\z )\in \I(\delta_1,\delta_2)\}\,  .
$$
The $\Q$-vector spaces  $\J(\delta_1,\delta_2)$ and $\I(\delta_1,\delta_2)$ have same dimension.  
This  follows directly from the fact that the map  
$$
\begin{array}{ccc} \Q[\Y,\z]_{\delta_1,\delta_2} & \rightarrow & \Q[\Y,\z]_{\delta_1,\delta_2}
\\ P(\Y,\z) & \mapsto & P(\bR_{\k_{l_0}}(\balpha)\bphi(T_{\k_{l_0}}\balpha)^{-1}\Y,\z )\, 
\end{array} 
$$
is an isomorphism, the matrix $\bR_{\k_{l_0}}(\balpha)\bphi(T_{\k_{l_0}}\balpha)^{-1}$ being invertible.  
Furthermore, we have  
\begin{equation}\label{eq: thetazero}
P(\bTheta_l(\z),T_{\k_l-\k_{l_0}}\z)=0\,, \;  \forall P\in \J(\delta_1,\delta_2),\, \forall l\geq l_0\,.
\end{equation} 
Indeed, if 
 $P \in \J(\delta_1,\delta_2)$, then  
$P(\bR_{\k_{l_0}}(\balpha)\bphi(T_{\k_{l_0}}\balpha)^{-1}\Y,\z )\in \I(\delta_1,\delta_2)$, and Lemma 
\ref{lem: phi2} implies that  
$$
P(\bR_{\k_{l_0}}(\balpha)\bphi(T_{\k_{l_0}}\balpha)^{-1}\bphi(\z)\bR_\k(\z),T_\k\z )=0\, , 
\; \forall \k \in V_+^\perp\,.
$$
For $l\geq l_0$, using the previous equality with $\k:=\k_l-\k_{l_0}$, we obtain that 
$$
P(\bR_{\k_{l_0}}(\balpha)\bphi(T_{\k_{l_0}}\balpha)^{-1}\bphi(\z)\bR_{\k_l-\k_{l_0}}(\z),T_{\k_l-\k_{l_0}}\z )=0\, .
$$
By \eqref{eq: theta}, we thus have $P(\bTheta_l(\z),T_{\k_l-\k_{l_0}}\z)=0$.

Let $p$ be as in the lemma and let us consider  
the three $\Q$-linear maps: 
\begin{eqnarray*}
&\left\{\begin{array}{cc}
\prod_{j=0}^{\delta_1} \I^{\perp}(\delta_1,\delta_2)
\\
(P_0(\Y,\z),\ldots,P_{\delta_1}(\Y,\z))
\end{array} \right.&
\\
& \big{\downarrow}&
 \\
& \left\{
\begin{array}{c} \Q[\Y]_{2\delta_1}[[\z]]
\\ E'(\Y,\z) := \sum_{j=0}^{\delta_1} P_j(\Y,\z)F(\Y,\z)^j
\end{array} \right.&
\\
&\big{\downarrow}&
\\
&\left\{\begin{array}{c} \Q[\Y,\z]_{2\delta_1,p-1} 
\\ 
 E'_p(\Y ,\z)
 \end{array}\right.&
 \\
&\big{\downarrow}&
\\
&\left\{\begin{array}{c}  \Q[\Y,\z]_{2\delta_1,p-1}/\J(2\delta_1,p-1) 
\\ 
 E'_p(\Y,\z) \mod \J(2\delta_1,p-1)
 \end{array}\right.&
\end{eqnarray*}
Note that these maps are well-defined.  
By Lemma \ref{dimensionespace}, the dimension of the $\Q$-vector space $\I^\perp(\delta_1,\delta_2)$
is at least equal to $\frac{c_1(\delta_1)}{2}\delta_2^{N}$, assuming  that $\delta_2$ is large enough.  
It follows that 
\begin{equation}\label{eq:dimev}
\dim_{\Q} \left(\prod_{j=0}^{\delta_1} \I^\perp(\delta_1,\delta_2)\right) \geq 
 \frac{c_1(\delta_1)}{2}(\delta_1+1)\delta_2^{N} \,.
\end{equation}
For every pair of nonnegative integers $(n,m)$, set 
$$
\overline{\J}(n,m):= \Q[\Y,\z]_{n,m}/\J(n,m)\,.
$$  
Since $\J(\delta_1,\delta_2)$ and $\I(\delta_1,\delta_2)$ have same dimension, Lemma  \ref{lem:majorationespacepolynome} implies that 
$$
\dim_{\Q} \overline{\J}(2\delta_1,p-1)\leq 
2^{t^2} \dim_{\Q} \overline{\J}(\delta_1,p-1)\, .
$$
Now, if  $\delta_2$ is sufficiently large, Lemma  \ref{dimensionespace} ensures that 
$$
\dim_{\Q} \overline{\J}(\delta_1,p-1) \leq 2c_1(\delta_1)p^{N} \,.
$$ 
On the other hand, the choice of $p$ ensures that 
$$
2^{t^2}\left(2c_1(\delta_1)p^N\right) <  \frac{c_1(\delta_1)}{2}(\delta_1+1)\delta_2^{N}
$$ 
and \eqref{eq:dimev} implies that   
$$
\dim_{\Q} \left(\prod_{j=0}^{\delta_1} \I^\perp(\delta_1,\delta_2)\right) > \dim_{\Q} \left( \Q[\Y,\z]_{2\delta_1,p-1}/\J(2\delta_1,p-1)\right)\,.
$$
Hence the $\Q$-linear map defined by  
$$(P_0(\Y,\z),\ldots,P_{\delta_1}(\Y,\z))\mapsto E'_p(\Y,\z)\mod \J(2\delta_1,p-1)$$ 
has a nontrivial kernel.  
We deduce the existence of polynomials $P_0,\ldots,P_{\delta_1}$ in $\I^{\perp}(\delta_1,\delta_2)$, 
not all zero, 
such that 
$E'_p \in \J(2\delta_1,p-1)$.  
By \eqref{eq: thetazero}, we obtain that $E'_p(\bTheta_l(\z),T_{\k_l-\k_{l_0}}\z)=0$ for all $l\geq l_0$.  
This ends the proof. 
\end{proof}

Let $E'$ be a formal power series satisfying the properties of  Lemma \ref{lem:fonctionauxiliaire} and 
let $v_0$ be the smallest 
index such that  the polynomial $P_{v_0}$ is nonzero.  Then the formal power series  
\begin{equation}\label{eq: E}
E(\Y,\z) := \sum_{j\geq v_0} P_j(\Y,\z)F(\Y,\z)^{j-v_0} \in \Q[\Y][[\z]]\,
\end{equation}
is the auxiliary function that we were looking for.  Note that  we have 
\begin{equation}\label{eq: EE'}
E(\Y,\z)F^{v_0}(\Y,\z) = E'(\Y,\z)\,.
\end{equation}

\subsubsection{Second step: upper bound for $\vert E(\bR_{\k_l}(\balpha),T_{\k_l}\balpha)\vert$}

The aim of this section is to prove that there exists a real number $c_2>0$ such that  
\begin{equation}\label{eq: majprinc}
\vert E(\bR_{\k_l}(\balpha),T_{\k_l}\balpha) \vert \leq e^{-c_2e^l\delta_1^{1/N}\delta_2 }\, ,
  \;\; \forall l \gg \delta_2 \gg \delta_1.
\end{equation}

\medskip

Let us first observe that 
$$
b_{\k_l-\k_{l_0}}(\z)^{(t^2+1)\delta_1}E'(\bTheta_l(\z),T_{\k_l-\k_{l_0}}\z)
$$ 
defines an analytic function on $\mathcal D(\bxi,r_2)$. Then, on this polydisc, 
it has a power series expansion 
\begin{equation}\label{eq:developpement_E'}
b_{\k_l-\k_{l_0}}(\z)^{(t^2+1)\delta_1}E'(\bTheta_l(\z),T_{\k_l-\k_{l_0}}\z)= 
\sum_{\lambd \in \N^N} \epsilon_{\lambd,l} (\z-\bxi)^{\lambd}\, ,
\end{equation}
where $\epsilon_{\lambd,l} \in \C$. 
Let $p$ be defined as in Lemma \ref{lem:fonctionauxiliaire}. We first prove the following result. 

\begin{lem}\label{lem:majo_epsilonk}
There exists a positive real number $\gamma$ that does not depend on $\delta_1,\delta_2$, $\lambd$, and $l$,  such that 
$$
\vert \epsilon_{\lambd,l}\vert \leq e^{-\gamma e^l p}\, ,\; \forall l \gg \delta_1,\delta_2,\lambd\,.
$$
\end{lem}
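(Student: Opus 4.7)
The plan is to exploit two facts in tandem: the vanishing of the polynomial truncation, $E'_p(\bTheta_l(\z), T_{\k_l-\k_{l_0}}\z)=0$, and the exponential smallness of $T_{\k_l-\k_{l_0}}\z$ when $\z$ is near $\bxi$. Writing $E'(\Y,\z)=\sum_{\lambd} e_\lambd(\Y)\z^\lambd\in\Q[\Y][[\z]]$, the vanishing of $E'_p$ after substitution yields the identity
$$
E'(\bTheta_l(\z), T_{\k_l-\k_{l_0}}\z) \;=\; \sum_{|\lambd| \geq p} e_\lambd(\bTheta_l(\z))\,(T_{\k_l-\k_{l_0}}\z)^\lambd,
$$
from which the lemma will follow. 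The strategy is to bound this tail term uniformly on a polydisc of fixed small radius around $\bxi$, then extract Taylor coefficients at $\bxi$ via Cauchy's integral formula.

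The first key estimate I would establish is $\Vert T_{\k_l-\k_{l_0}}\z\Vert \leq e^{-c_0 e^l}$ for some $c_0>0$ uniformly on $\mathcal D(\bxi,r)$, for a suitably small $r<r_2$. To see this, write each coordinate of $T_{\k_l-\k_{l_0}}\z$ as a monomial $\prod_i z_i^{n_i}$ and factor out its value at $\bxi$: the constant factor is a coordinate of $T_{\k_l}\balpha$, which is at most $e^{-c e^l}$ by admissibility together with Lemma \ref{lem:uniformiteconvergencefamilles} (applied with $\lambda=1$, giving $\rho=e$), while the variation factor $\prod_i(z_i/\xi_i)^{n_i}$ is bounded by $(1+r/\min_i|\xi_i|)^{\Vert T_{\k_l-\k_{l_0}}\Vert}\leq e^{c'e^l}$ with $c'$ as small as desired upon choosing $r$ small. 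The second key estimate is a Cauchy bound on $e_\lambd(\Y)$: since $E'(\Y,\cdot)$ is analytic on any polydisc $\mathcal D(\boldsymbol 0, r_1')$ with $r_1'<r_1$, one has $|e_\lambd(\Y)|\leq (r_1')^{-|\lambd|}\sup_{\Vert\z\Vert=r_1'}|E'(\Y,\z)|$, which is further majorized by $C_1(\delta_1,\delta_2)\,\Vert\Y\Vert^{2\delta_1}(r_1')^{-|\lambd|}$ using that $E'$ has total degree $\leq 2\delta_1$ in $\Y$.

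Combining these bounds with a control of $\Vert\bTheta_l(\z)\Vert$ on $\mathcal D(\bxi,r)$ after clearing by $b_{\k_l-\k_{l_0}}(\z)^{(t^2+1)\delta_1}$ (the exponent is precisely what is needed to clear the denominators present in $\bR_{\k_l-\k_{l_0}}(\z)^{\otimes (t^2+1)\delta_1}$, matching the maximum $\Y$-degree in $E'$), the tail series is dominated by a geometric series of ratio $e^{-c_0 e^l}/r_1'$ and sums to $O(e^{-c_0 e^l p/2})$ for $l$ large. Finally, Cauchy's integral formula applied to the analytic function $g_l(\z):= b_{\k_l-\k_{l_0}}(\z)^{(t^2+1)\delta_1} E'(\bTheta_l(\z), T_{\k_l-\k_{l_0}}\z)$ on the distinguished boundary $\Vert\z-\bxi\Vert=r$ gives $|\epsilon_{\lambd,l}|\leq r^{-|\lambd|}\sup|g_l|$, and for $l\gg\lambd,\delta_1,\delta_2$ the exponential factor $e^{-c_0 e^l p/2}$ absorbs both $r^{-|\lambd|}$ and any polynomial-in-$e^l$ growth coming from $\Vert\bTheta_l(\z)\Vert^{2\delta_1}$, provided $p\gg\delta_1$—which is automatic since $p\sim\delta_1^{1/N}\delta_2$ and we assume $\delta_2\gg\delta_1$. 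The main obstacle I anticipate is the careful bookkeeping of norms: in particular, showing that after the $b_{\k_l-\k_{l_0}}^{(t^2+1)\delta_1}$ clearing, $\Vert\bTheta_l(\z)\Vert$ on the polydisc grows only as $e^{C e^l}$ so that the factor $\Vert\bTheta_l(\z)\Vert^{2\delta_1}\leq e^{2C\delta_1 e^l}$ is indeed dominated by $e^{c_0 e^l p/2}$. This is the delicate point that ties the choice of $p$ and the exponent of the clearing factor to the construction of the auxiliary function in Lemma \ref{lem:fonctionauxiliaire}.
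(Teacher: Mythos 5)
Your proof is correct, but it follows a different route from the paper's. You bound the whole cleared function $g_l(\z):=b_{\k_l-\k_{l_0}}(\z)^{(t^2+1)\delta_1}E'(\bTheta_l(\z),T_{\k_l-\k_{l_0}}\z)$ in sup norm on a fixed small polydisc $\mathcal D(\bxi,r)$, using the uniform contraction $\Vert T_{\k_l-\k_{l_0}}\z\Vert\leq e^{-c_0e^l}$ there, and then extract all Taylor coefficients at once by the Cauchy integral formula. The paper instead never leaves the level of Taylor coefficients: it writes $E'-E'_p=\sum_i G_i(\z)\Y^{\bnu_i}$ with ${\rm ord}\,G_i\geq p$, bounds the coefficients of $G_i(T_{\k_l-\k_{l_0}}\z)$ re-expanded at $\bxi$ by a Cauchy--Hadamard plus binomial computation (using only $\Vert\bxi\Vert<r_1$ and the fact that the substituted series has order $\geq\kappa_1e^lp$), bounds the coefficients of the cleared $\bTheta_l$-monomials by $e^{\gamma_3(\delta_1,\lambd)l}$ via the multiplicative recurrence $b\bTheta_{l+1}=b\bTheta_l\Gamma_{\k_{l+1}-\k_l,l}$, and multiplies the two expansions. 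Your approach is cleaner and more in the classical style of transcendence proofs; the paper's pays off in two ways. First, its loss from the $\Y$-part is only $e^{O(\delta_1 l)}$, absorbed as soon as $l\gg\delta_1$, whereas your cruder bound $e^{O(\delta_1 e^l)}$ forces you to invoke $p\gg\delta_1$, i.e.\ implicitly $\delta_2\gg\delta_1$ --- harmless here since that is exactly the regime of the subsequent steps, but note that your estimate of $\Vert b_{\k_l-\k_{l_0}}(\z)\bTheta_l(\z)\Vert$ as $e^{Ce^l}$ is a large overestimate: it is a product of $O(l)$ factors each uniformly bounded on $\mathcal D(\bxi,r_2)\subset\mathcal D(\boldsymbol 0,r_1)$, hence $e^{O(l)}$. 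Second, the Taylor expansions at $\bxi$ set up in the paper's proof are reused immediately afterwards to divide by $F(\bTheta_{l_0}(\z),\z)^{v_0}$ and identify the coefficient of $(\z-\bxi)^{\lambd_0}$, so the bookkeeping does double duty. Two small slips in your write-up, neither fatal: the total $\Y$-degree of $E'$ is $(t^2+1)\delta_1$, not $2\delta_1$ (only the degree in each individual $y_{i,j}$ is $\leq 2\delta_1$), which is consistent with the clearing exponent you correctly use; and the admissibility bound must be applied coordinatewise (which Condition (b) indeed provides) before factoring each monomial coordinate of $T_{\k_l-\k_{l_0}}\z$ through its value at $\bxi$.
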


\begin{proof}
Set 
$$
G(\Y,\z):=E'(\Y,\z) - E'_p(\Y,\z)\in \Q\{\z\}[\Y] \, . 
$$ 
By Lemma \ref{lem:fonctionauxiliaire}, we have  
\begin{equation}\label{eq: GE'}
G(\bTheta_l(\z),T_{\k_l-\k_{l_0}}\z)=E'(\bTheta_l(\z),T_{\k_l-\k_{l_0}}\z) \,,\; \forall l\geq l_0\,.
\end{equation} 
Let $\bnu_1,\ldots,\bnu_s$ denote an enumeration of the $t \times t$ matrices with nonnegative integer coefficients that can be decomposed as  
$$
\bnu_l = M_l + N_l\, ,
$$
where $M_l$ has coefficients in the set  $\{0,1,\ldots,\delta_1\}$ and  
$N_l$ is a matrix with nonnegative integer coefficients whose sum  is at most $\delta_1$. 
The sum of the coefficients of each matrix  $\bnu_i$ is at most equal to 
 $(t^2+1)\delta_1$. There exists a unique decomposition of the form 
$$
G(\Y,\z) = \sum_{i=1}^s \sum_{\vert \lambd \vert \geq p} g_{\lambd,i} \z^\lambd \Y^{\bnu_i}\, ,
$$
where $g_{\lambd,i} \in \C$. For every $i$, $1\leq i \leq s$, we define the formal power series 
$$
G_i(\z):=\sum_{\vert \lambd \vert \geq p} g_{\lambd,i} \z^\lambd \in \mathbb C[[\z]]\, .
$$
By definition of $F(\Y,\z)$, these series belong to $\Q[\z,\f(\z)]$. In particular, they are analytic on some 
polydisc  $\mathcal D(\boldsymbol{0},r)$ with $r>r_1$. 
By  \eqref{eq:CauchyHadamard}, there exists a positive real number $\gamma_1(\delta_1,\delta_2)$ 
such that 
\begin{equation}\label{eq:majoration_g}
\vert g_{\lambd,i} \vert \leq \gamma_1(\delta_1,\delta_2)r_1^{-\vert \lambd \vert}\, , \; \forall \lambd \in \N^N\,.
\end{equation}
On the other hand, Lemma \ref{lem:uniformiteconvergencefamilles} 
ensures the existence of two positive real numbers 
$\kappa_1$ and $\kappa_2$ such that
\begin{equation}
\label{eq: EncadrementTk}
\kappa_1 e^l \vert \lambd \vert \leq \vert \lambd T_{\k_l} \vert \leq \kappa_2 e^l \vert \lambd \vert\, \;\; \forall l,\forall \lambd.
\end{equation}
For every $l \geq l_0$, $G_i(T_{\k_l-\k_{l_0}}\z)$ can thus be written as 
\begin{equation}\label{eq:developpement_G_Tk_0}
G_i(T_{\k_l-\k_{l_0}}\z)= \sum_{\vert \lambd \vert \geq  \kappa_1 e^l p} g_{\lambd,i,l} \z^\lambd\, ,
\end{equation}
with  $g_{\lambd,i,l} \in \C$. Furthermore, this power series is absolutely convergent 
on the polydisc $\mathcal D(\boldsymbol{0},r_1)$. Since the matrix $T_{\k_l-\k_{l_0}}$ is invertible, 
has nonnegative coefficients, and  
$r_1 \leq 1$, we deduce from  \eqref{eq:majoration_g} that  
\begin{equation}
\label{eq:majoration_gk}
\vert g_{\lambd,i,l} \vert \leq \gamma_1(\delta_1,\delta_2)r_1^{-\vert \lambd  \vert}\, ,
\end{equation}
for all $\lambd$, $i\in\{1,\ldots,s\}$, and $l \geq l_0$. 
On the other hand, every function  
$G_i(T_{\k_l-\k_{l_0}}\z)$, $1\leq i \leq s$, $l \geq l_0$, is analytic on the polydisc $\mathcal D(\bxi,r_2)$. 
The power series expansion of $G_i(T_{\k_l-\k_{l_0}}\z)$ at $\bxi$ is thus absolutely convergent on 
 $\mathcal D(\bxi,r_2)$, and we can write 
\begin{equation}\label{eq:developpement_G_alpha}
G_i(T_{\k_l-\k_{l_0}}\z)= \sum_{\lambd \in \N^N} h_{\lambd,i,l} (\z - \bxi)^\lambd \, ,
\end{equation}
where $h_{\lambd,i,l} \in \C$. 
Since by assumption $\mathcal D(\bxi,r_2) \subset \mathcal D(\boldsymbol{0},r_1)$, 
the two power series expansions \eqref{eq:developpement_G_Tk_0} and \eqref{eq:developpement_G_alpha} 
match on $\mathcal D(\bxi,r_2)$ . Using the equality  
$\z^{\lambd}=((\z-\bxi)+ \bxi)^\lambd$ and identifying, for every $\lambd \in \N^N$, the terms in  
$(\z - \bxi)^{\lambd}$ in \eqref{eq:developpement_G_Tk_0} and \eqref{eq:developpement_G_alpha}, 
we obtain that 
\begin{equation}\label{eq:identit_gk}
h_{\lambd,i,l} = \sum_{\substack{\vert \gamm \vert \geq  \kappa_1 e^l p 
\\ \gamm \geq \lambd}} \binom{\gamm}{\lambd} g_{\gamm,i,l} \bxi^{\gamm-\lambd}  \, .
\end{equation} 
For $\gamm \geq \lambd$, one has
   \begin{equation}\label{eq:majbinom}
   \binom{\gamm}{\lambd} = \prod_{i=1}^N \frac{\gamma_i
   !}{(\gamma_i-\lambda_i)!\lambda_i!}
   \leq \prod_{i=1}^N \gamma_i^{\lambda_i} \leq \vert \gamm
   \vert^{\vert \lambd \vert}\, .
   \end{equation}
   Given $\lambd \in \N^N$, 
    $\vert \lambd \vert < \kappa_1 e^l p$ as soon as $l$ is large enough, and  
   since $ \Vert \bxi \Vert<r_1$, we infer from
   \eqref{eq:majoration_gk} and \eqref{eq:identit_gk}
   the existence of a real number $\gamma_2>0$ that  does not
   depend on $\delta_1$, $\delta_2$, $\lambd$, and $l$,   
   such that
   \begin{equation}\label{eq:majoration_g_alpha}
   \vert h_{\lambd,i,l}\vert \leq   e^{-\gamma_2e^l p}\, , \; \forall l \gg \delta_1,\delta_2,\lambd\,.
   \end{equation}

   By Remark \ref{rem: thetak}, the monomial $(b_{\k_l-\k_{l_0}}(\z)\bTheta_l(\z))^{\bnu_i}$
   is analytic on  $\mathcal D(\bxi,r_2)$ for every $i$, $1\leq i \leq s$,
   and every $l \geq l_0$. 
   Multiplying by $b_{\k_l-\k_{l_0}}(\z)^{(t^2+1)\delta_1 - \vert
   \bnu_i \vert}$, we obtain on
   $\mathcal D(\bxi,r_2)$ a power series expansion of the form
   \begin{equation}\label{eq:developpement_thetak}
   b_{\k_l-\k_{l_0}}(\z)^{(t^2+1)\delta_1 - \vert \bnu_i
   \vert}(b_{\k_l-\k_{l_0}}(\z)\bTheta_l(\z))^{\bnu_i} =
   \sum_{\lambd\in \N^N} \theta_{\lambd,i,l}(\z-\bxi)^{\lambd}\, ,
   \end{equation}
   where $\theta_{\lambd,i,l} \in \C$.

 Given $Q(\z)\in \Q[\z]$ and $l \geq 0$, 
    the polynomial $Q_l(\z):=Q(T_{\k_l-\k_{l_0}}\z)$ can be converted into  
    a polynomial in $(\z-\bxi)$: 
   $$
   Q_l(\z) = \sum_{\lambd } q_{\lambd,l}(\z-
   \bxi)^\lambd\, .
   $$
  Using the relation
   $$
   \z^\gamm = \sum_{\lambd \leq \gamm}
   \binom{\gamm}{\lambd}\bxi^{\lambd-\gamm} (\z - \bxi)^\lambd\, ,
   $$
   \eqref{eq: EncadrementTk} and \eqref{eq:majbinom}, we obtain that,  for every $\lambd$, 
   $\vert q_{\lambd,l} \vert = \mathcal O(e^l)$, where 
   the underlying constant in the $\mathcal O$ notation depends both on $Q(\z)$ and 
   $\lambd$. Set $ \Gamma_\k(\z):=b_{\k}(\z)\bR_{\k}(\z)\in\Q[\z]$ and 
   $$
   \Gamma_{\k,l}(\z):=b_{\k}(T_{\k_l-\k_{l_0}}\z)\bR_{\k}(T_{\k_l-\k_{l_0}}\z)\, .
   $$
   Since there are only finitely many differences $\k_{l+1}-\k_l$, the
   coefficients in $(\z-\bxi)^\lambd$ for the entries of $\Gamma_{\k_{l+1}-\k_l,l}(\z)$, 
   seen as polynomials in $\z-\bxi$, belong to $\mathcal O(e^l)$. Using
   the recurrence relation
   $$
   b_{\k_{l+1}-\k_{l_0}}(\z)\bTheta_{l+1}(\z)=b_{\k_l-\k_{l_0}}(\z)\bTheta_l(\z)\Gamma_{\k_{l+1}-\k_l,l}(\z) \,,
   $$
   we infer from \eqref{eq:bounded2} that, for every $\lambd$, 
   there exists a  real number
   $\gamma_3(\delta_1,\lambd)>0$ that does not depend on $i$, $\delta_2$, and $l$, and such that
   \begin{equation}\label{eq:theta_ik}
   \vert  \theta_{\lambd,i,l} \vert < e^{\gamma_3(\delta_1,\lambd){l}} \,, \;\; \forall i,\, 1\leq i \leq s, \, \forall l \geq l_0\,.
   \end{equation}

From  \eqref{eq:developpement_E'}, \eqref{eq: GE'}, \eqref{eq:developpement_G_alpha}, and \eqref{eq:developpement_thetak}, we deduce  that 
\begin{equation}\label{eq:hthetae}
\sum_{i=1}^s \left(\sum_{\lambd \in \N^N} h_{\lambd,i,l} (\z-\bxi)^\lambd \right)\left(\sum_{\lambd \in \N^N} \theta_{\lambd,i,l} (\z-\bxi)^\lambd \right)  =\sum_{\lambd \in \N^N} \epsilon_{\lambd,l} (\z-\bxi)^{\lambd} \,.
\end{equation}
Moreover, since the power series involved in \eqref{eq:developpement_G_alpha} and  \eqref{eq:developpement_thetak} are absolutely convergent on $\mathcal D(\bxi,r_2)$, 
we obtain 
\begin{eqnarray}\label{eq:developpement_total_G}
\nonumber  \sum_{i=1}^s \left(\sum_{\lambd \in \N^N} h_{\lambd,i,l} (\z-\bxi)^\lambd \right)
\left(\sum_{\lambd \in \N^N} \theta_{\lambd,i,l} (\z-\bxi)^\lambd \right)  
\\ = 
  \sum_{\lambd \in \N^N} \sum_{i=1}^s \sum_{\gamm \leq \lambd} h_{\gamm,i,l} \theta_{\lambd-\gamm,i,l}(\z-\bxi)^\lambd\, .
\end{eqnarray}
Finally, identifying the terms in $(\z-\bxi)^{\lambd}$ in 
\eqref{eq:hthetae} thanks to \eqref{eq:developpement_total_G}, we get that 
$$
\epsilon_{\lambd,l} = \sum_{i=1}^s \sum_{\gamm \leq \lambd} h_{\gamm,i,l}\theta_{\lambd-\gamm,i,l}\, .
$$
Now, we fix $\lambd \in \N^N$. By \eqref{eq:majoration_g_alpha} and \eqref{eq:theta_ik}, 
there exists a real number $\gamma_4>0$ that does not depend on $\delta_1$, $\delta_2$, $\lambd$, and $l$, such that 
$$
\vert \epsilon_{\lambd,l}\vert \leq e^{-\gamma_4e^l p}\, ,\;\; \forall l\gg \delta_1,\delta_2,\lambd\,.
$$
Setting $\gamma=\gamma_4$, this ends the proof. 
\end{proof}

Now, let us observe that the function 
$b_{\k_l-\k_{l_0}}(\z)^{(t^2+1)\delta_1}E(\bTheta_l(\z),T_{\k_l-\k_{l_0}}\z)$ is analytic   
on 
$\mathcal D(\bxi,\r_2)$. Hence it has a power series expansion of the form  
\begin{equation}\label{eq:developpementE}
b_{\k_l-\k_{l_0}}(\z)^{(t^2+1)\delta_1}E(\bTheta_l(\z),T_{\k_l-\k_{l_0}}\z) =\sum_{\lambd \in \N^N} 
e_{\lambd,l} (\z-\bxi)^{\lambd}\, ,
\end{equation}
with $e_{\lambd,l} \in \C$. 
Also, $F(\bTheta_{l_0}(\z),\z)^{v_0}$ is analytic on $\mathcal D(\bxi,r_2)$, so that 
\begin{equation}\label{eq:developpementF}
F(\bTheta_{l_0}(\z),\z)^{v_0} = \sum_{\lambd \in \N^N} a_{\lambd}(\z-\bxi)^\lambd\, ,
\end{equation}
with $a_{\lambd} \in \C$. 
Using \eqref{eq: iterationF}, we get that  
$$
F(\bTheta_l(\z),T_{\k_l-\k_{l_0}}\z) = F(\bTheta_{l_0}(\z),\z)\,, \;\; \forall l\geq l_0\,.
$$ 
 This equality is \textit{a priori} valid for $\z$ in 
$\mathcal D(\bxi,r_l)$ (see Remark \ref{rem: thetak}), but 
it extends to $\mathcal D(\bxi,r_2)$ by analytic continuation.   
By \eqref{eq: EE'},  we thus have 
\begin{equation}\label{eq:factorisationE'}
E(\bTheta_l(\z),T_{\k_l-\k_{l_0}}\z)F(\bTheta_{l_0}(\z),\z)^{v_0}=E'(\bTheta_l(\z),T_{\k_l-\k_{l_0}}\z) \,,
\end{equation}
for all $l\geq l_0$ and all $\z\in\mathcal D(\bxi,r_l)$. 
By \eqref{eq: notzero}, $F(\bTheta_{l_0}(\z),\z)$ is nonzero and there thus exists at least  
one nonzero coefficient $a_{\lambd}$ in \eqref{eq:developpementF}. Let us consider an index $\lambd_0$ 
such that $a_{\lambd_0}\not= 0$ with $\vert \lambd_0\vert$ minimal. Multiplying both sides of  
\eqref{eq:factorisationE'} by $b_{\k_l-\k_{l_0}}(\z)^{(t^2+1)\delta_1}$, and identifying the coefficients in  
$(\z - \bxi)^{\lambd_0}$ in their power series expansion on $\mathcal D(\bxi,r_2)$ with the help of   \eqref{eq:developpement_E'}, \eqref{eq:developpementE}, and \eqref{eq:developpementF}, 
we obtain that  
$$
e_{0,l}a_{\lambd_0} = \epsilon_{\lambd_0,l}\, , \;\; \forall l\geq l_0\,.
$$
Since $\bTheta_{l}(\bxi)=\bR_{\k_l}(\balpha)$, we infer from Lemma \ref{lem:majo_epsilonk} 
and the definition of $p$ (see Lemma \ref{lem:fonctionauxiliaire}), 
the existence of a  real number  $\beta_1>0$ that does not depend on  
$\delta_1$, $\delta_2$, and $l$, 
such that 
\begin{eqnarray}\label{eq:majoEb}
\nonumber & & \left\vert b_{\k_l-\k_{l_0}}(\bxi)^{(t^2+1)\delta_1}E(\bR_{\k_l}(\balpha),T_{\k_l}\balpha)\right\vert 
\\ \nonumber &=& \left\vert b_{\k_l-\k_{l_0}}(\bxi)^{(t^2+1)\delta_1}E(\bTheta_l(\bxi),T_{\k_l-\k_{l_0}}\bxi) \right\vert 
\\ &=& \vert e_{0,l} \vert 
\\ \nonumber & \leq & e^{-\beta_1e^l \delta_1^{1/N}\delta_2}\, ,\;\; \forall l\gg \delta_1,\delta_2\,.
\end{eqnarray}

Now, it just remains to find a lower bound for $\vert b_{\k_l-\k_{l_0}}(\bxi)^{(t^2+1)\delta_1}\vert$. 
By \eqref{eq: EncadrementTk}, there exists a positive real number $\beta_{2}$ that does not depend on 
$l$, $\delta_1$, and $\delta_2$, such that the degree of the polynomial 
$b_{\k_l-\k_{l_0}}(\z)^{(t^2+1)\delta_1}$ is at most equal to  
$\beta_{2}e^l\delta_1$. Each $\balpha_i$ being by assumption regular w.r.t.\ \eqref{eq:mahleriT},  
$b_{\k_l-\k_{l_0}}(\bxi)\neq 0$ for all $l \geq l_0$. Since the numbers $b_{\k_l-\k_{l_0}}(\bxi)$, $l \geq l_0$, 
belong to some fix number field, we infer from Liouville's inequality  the existence of a  real number 
 $\beta_{3}>0$ that does not depend on
$l$, $\delta_1$, and $\delta_2$, such that 
\begin{equation}\label{eq:minorationbk}
\vert b_{\k_l-\k_{l_0}}(\bxi) \vert^{(t^2+1)\delta_1} \geq e^{-\beta_{3} e^l\delta_1}\, , \; \; \forall l,\, \forall \delta_1\, .
\end{equation}
By \eqref{eq:majoEb} and \eqref{eq:minorationbk}, there exists a  real number 
$\beta_{4}>0$ that does not depend on  $l$, $\delta_1$, and  $\delta_2$, such that 
\begin{eqnarray}
\label{eq:majoE}
\nonumber\vert E(\bR_{\k_l}(\balpha),T_{\k_l}\balpha)\vert 
& \leq&  e^{\beta_{3}e^l\delta_1-\beta_{1}e^l\delta_1^{1/N}\delta_2 }\\
\nonumber&\leq &e^{-\beta_{4}e^l\delta_1^{1/N}\delta_2 }\, , \;\; \forall l \gg \delta_2 \gg \delta_1\,.
\end{eqnarray}
 Setting $c_2=\beta_4$, 
this proves the upper bound \eqref{eq: majprinc}.

\subsubsection{Third step: lower bound for $\vert E(\bR_{\k_l}(\balpha),T_{\k_l}\balpha)\vert$}
\label{subsub:min}

Let us first recall that, by \eqref{eq: annulationFitere}, we have 
$$
F(\bR_{\k_l}(\balpha),T_{\k_l}\balpha)=0 \, , \;\; \forall l\,.
$$
By construction of our auxiliary function, we deduce that 
$$E(\bR_{\k_l}(\balpha),T_{\k_l}\balpha)=P_{v_0}(\bR_{\k_l}(\balpha),T_{\k_l}\balpha)\,.$$ 
Furthermore, since this construction ensures that $P_{v_0} \notin \mathcal I$, 
there exists an infinite set of positive integers 
$\mathcal E$ such that 
$$
P_{v_0}(\bR_{\k_l}(\balpha),T_{\k_l}\balpha) \not=0 \, , \;\; \forall l\in\mathcal E\,.
$$
Since $P_{v_0}$ has degree at most $\delta_1$ in each indeterminate $y_{i,j}$, and total degree at most  
$\delta_2$ in $\z$,  
 Liouville's inequality and a computation similar to the previous one ensure the existence of a real 
 number  $c_{3}>0$ that does not depend on $\delta_1$, $\delta_2$, and $l$, such that 
\begin{equation}
\label{eq:minorationE}
\vert E(\bR_{\k_l}(\balpha),T_{\k_l}\balpha)\vert=|P_0(\bR_{\k_l}(\balpha),T_{\k_l}\balpha)| \geq  
e^{-c_{3}e^l\delta_2}\, , \;\; \forall l\in\mathcal E, \delta_2 \geq \delta_1\,. 
\end{equation}


\subsubsection{Fourth step: contradiction}\label{contradiction}
By Inequalities \eqref{eq: majprinc} and \eqref{eq:minorationE}, we obtain that   
$$
e^{-c_{3}e^l\delta_2} \leq \vert E(\bR_{\k_l}(\balpha),T_{\k_l}\balpha)\vert  
\leq e^{-c_{2}e^l\delta_1^{1/N}\delta_2 }\, ,\;\; \forall l\in \mathcal E, \, l\gg \delta_2\gg \delta_1\,.
$$
Finally, we deduce that 
$$
c_{3} \geq c_{2}\delta_1^{1/N}\, .
$$
Since $c_{2}$ 	and $c_{3}$ do not depend on $\delta_1$, 
this inequality provides a contradiction, as soon as $\delta_1$ is large enough.   
This end the proof of Lemma \ref{lem:nulliteF}. \qed

\subsection{End of the proof of Theorem \ref{thm: families}}\label{sec:endproof}

Let us recall that  $d(\z) \in \Q[\z]$ stands for the least common multiple  of the denominators of 
the coefficients  of the matrices  
$\bphi_j(\z)$ defined in \eqref{eq:decompositionPhi}. Hence 
$d(T_{\k_{l_0}}\z)\bTheta_{l_0}(T_{\k_{l_0}}\z)$ 
has coefficients in $\Q[\z,\varphi(T_{\k_{l_0}}\z)]$. Lemma \ref{lem:zeroMasserextended} 
ensures that $d(T_{\k_{l_0}}\balpha)\neq 0$. Let $q(\z)$ denote the least common multiple 
of the denominators of the coefficients of the matrix  
$\bR_{\k_{l_0}}^{-1}(\z)$. Since, for every i, $\balpha_i$ is assumed to be regular w.r.t. \ \eqref{eq:mahleriT}, 
we have that $q(\balpha)\not=0$. 
Similarly, $b_\k(\z)\bR_\k(\z)$ has coefficients in $\Q[\z]$ and  $b_{\k_{l_0}}(\balpha)\not=0$.

By Lemma \ref{lem:nulliteF}, we know that $F(\bTheta_{l_0}(\z),\z)=0$, and substituting 
$T_{\k_{l_0}}\z$ to  $\z$, we obtain that 
$F(\bTheta_{l_0}(T_{\k_{l_0}}\z),T_{\k_{l_0}}\z)=0$. 
The function $F(\Y,\z)$ being linear in $\Y$, we deduce that 
$$
F \left(\frac{b_{\k_{l_0}}(\z)d(T_{\k_{l_0}}\z)q(\z)}
{b_{\k_{l_0}}(\balpha)d(T_{\k_{l_0}}\balpha)q(\balpha)}\bTheta_{l_0}(T_{\k_{l_0}}\z),T_{\k_{l_0}}\z\right)=0 \, .
$$ 
Writing $\bTheta_{l_0}(T_{\k_{l_0}}\z)=\bTheta_{l_0}(T_{\k_{l_0}}\z)\bR_{\k_{l_0}}(\z)^{-1}\bR_{\k_{l_0}}(\z)$, 
and using   \eqref{eq: iterationF}, we get that 
\begin{equation}\label{eq:nulliteF}
F \left(\frac{b_{\k_{l_0}}(\z)d(T_{\k_{l_0}}\z)q(\z)}
{b_{\k_{l_0}}(\balpha)d(T_{\k_{l_0}}\balpha)q(\balpha)}\bTheta_{l_0}(T_{\k_{l_0}}\z)
\bR_{\k_{l_0}}(\z)^{-1},\z\right)=0 \,.
\end{equation}
Set
$$
Q(\z,\X):=\btau \left(\frac{b_{\k_{l_0}}(\z)d(T_{\k_{l_0}}\z)q(\z)}
{b_{\k_{l_0}}(\balpha)d(T_{\k_{l_0}}\balpha)q(\balpha)}
\bTheta_{l_0}(T_{\k_{l_0}}\z)\bR_{\k_{l_0}}(\z)^{-1}\right)\X^\bmu \, ,
$$
where we let $\X^\bmu$ denote the column vector with coordinates 
$\X^{\bmu_1},\ldots,\X^{\bmu_t}$. 
We thus get that  $Q(\z,\X)\in \Q[\z,\varphi(T_{\k_{l_0}}\z),\X]$. 
Since $\varphi\circ T_{\k_{l_0}}$ is analytic at $\balpha$, it follows that   
$Q(\z,\X)\in \overline{Q(\z)}_{\balpha}[\X]$. 
Moreover, since $\bTheta_{l_0}(T_{\k_{l_0}}\balpha)=\bTheta_{l_0}(\bxi)=\bR_{\k_{l_0}}(\balpha)$, 
we deduce that 
$$
Q(\balpha,\X)= \btau  \X^\bmu=P(\X)\, .
$$
Finally, it follows from  \eqref{eq:nulliteF} that 
$$
Q(\z,\f(\z))= 0\,.
$$
This ends the first part of the proof of Theorem  \ref{thm: families}. 

Now, let us assume that $\Q(\z)(\f(\z))$ is a regular extension of $\Q(\z)$. 
Let $\mathbb K$ be an algebraic closure of $\Q(\z)$ containing $\varphi(T_{\k_{l_0}}\z)$. 
By \cite[Chapter VIII]{Lang},  $\Q(\z)(\f(\z))$ and $\mathbb K$ 
are linearly disjoint over $\Q(\z)$.  
Let $\delta$ denote the degree of $\varphi(T_{\k_{l_0}}\z)$ over $\Q(\z)$. Since the functions 
$\varphi(T_{\k_{l_0}}\z)^j$, 
$0\leq j  \leq \delta-1$, are linearly independent over $\Q(\z)$, they remain linearly independent over 
$\Q(\z)(\f(\z))$. Splitting the polynomial $Q$ as  
$$
Q= \sum_{j=0}^{\delta-1} Q_j(\z,\X)\varphi(T_{\k_{l_0}}\z)^j\,,
$$
where $Q_j(\z,\X)\in \Q[\z,\X]$, 
we thus deduce that  
$Q_j(\z,\f(\z))=0$  
for all $j$, $0\leq j\leq \delta-1$. Finally, setting 
$$
R(\z,\X):= \sum_{j=0}^{\delta-1} Q_j(\z,\X)\varphi(T_{\k_{l_0}}\balpha)^j\in \Q[\z,\X]\,,
$$
we obtain that 
$
R(\z,\f(\z))=0$  and  $R(\balpha,\X)=P(\X)$,  
as wanted. 
\qed


\section{Proofs of Theorems \ref{thm: permanence},  \ref{thm: purete2}, \ref{thm: purity},
and of Corollaries \ref{thm: Nishioka} and \ref{cor:2}}
\label{sec: final}

In this section, we complete the proof of our main results.  
Note that we establish Corollary \ref{cor:2} before Theorems \ref{thm: purete2} and \ref{thm: purity}. 
The latter are in fact deduced from Corollary \ref{cor:2}. 

\subsection{Proof of Theorem  \ref{thm: permanence}}
There is nothing more to do.
Theorem  \ref{thm: permanence} just corresponds to the case $r=1$ of Theorem
\ref{thm: families}. \qed

  \subsection{Proof of Corollary \ref{thm: Nishioka}}
   We first note that the inequality
   $$
   {\rm tr.deg}_{\Q}(f_1(\balpha),\ldots,f_m(\balpha)) \leq {\rm
   tr.deg}_{\Q(\z)}(f_1(\z),\ldots,f_m(\z))
   $$
   always holds. 
  Let $\mathbb K$ denote the field of fractions of the ring  
   $\overline{\mathbb Q(\z)}_\balpha$. Since $\mathbb K$ 
   is algebraic over $\Q(z)$, we have
   $$
   {\rm tr.deg}_{\Q(\z)}(f_1(\z),\ldots,f_m(\z)) = {\rm
   tr.deg}_{\mathbb K}(f_1(\z),\ldots,f_m(\z))\, .
   $$
   It thus remains to prove that
   \begin{equation}
   \label{ineg:degtrans}
   {\rm tr.deg}_{\Q}(f_1(\balpha),\ldots,f_m(\balpha)) \geq {\rm
   tr.deg}_{\mathbb K}(f_1(\z),\ldots,f_m(\z))\, .
   \end{equation}
   We  follow the same strategy as the one used for proving the Siegel--Shidlovskii theorem 
   in the framework of $E$-functions (see \cite[Theorem 5.23, p.\,230]{FN98}). 
   We first replace Proposition 5.1 in \cite{FN98} by the following result. 
   
   \begin{lem}
   \label{lem:linearLifting}
   Let $g_1(\z),\ldots,g_\ell(\z)\in \Q\{\z\}$ be related by a linear $T$-Mahler system. 
   Let $\balpha \in (\Q^\star)^n$ be a regular point w.r.t.\ this system and let us assume that the 
   pair $(T,\balpha)$ is admissible. Let $s$ be the maximum number of 
   functions among $g_1(\z),\ldots,g_\ell(\z)$ that are linearly independent
   over $\mathbb K$. Then at least $s$ of the
   numbers $g_1(\balpha),\ldots,g_\ell(\balpha)$ are linearly independent over $\Q$. 
   \end{lem}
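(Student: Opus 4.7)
The plan is to deduce the lemma from the lifting theorem (Theorem~\ref{thm: permanence}) by a standard Siegel--Shidlovskii--style argument: every $\overline{\mathbb Q}$-linear dependence among the values must come from a $\mathbb K$-linear dependence among the functions. I argue by contradiction. Let $t$ denote the maximum number of $g_i(\balpha)$ that are linearly independent over $\overline{\mathbb Q}$, and assume $t<s$. After relabelling, I may suppose that $g_1(\balpha),\ldots,g_t(\balpha)$ are linearly independent over $\overline{\mathbb Q}$ and that, for every $j\in\{t+1,\ldots,\ell\}$, there exist $c_{1,j},\ldots,c_{t,j}\in\overline{\mathbb Q}$ with
$$
P_j(X_1,\ldots,X_\ell):=X_j-\sum_{i=1}^t c_{i,j}X_i, \qquad P_j(g_1(\balpha),\ldots,g_\ell(\balpha))=0.
$$

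Each $P_j$ is a homogeneous polynomial of degree $1$ vanishing at the point $(g_1(\balpha),\ldots,g_\ell(\balpha))$. Since $(T,\balpha)$ is admissible and $\balpha$ is regular with respect to the given Mahler system, Theorem~\ref{thm: permanence} applies and produces, for every $j\in\{t+1,\ldots,\ell\}$, a homogeneous polynomial $Q_j\in\overline{\mathbb Q(\z)}_\balpha[X_1,\ldots,X_\ell]$ such that
$$
Q_j(\z,g_1(\z),\ldots,g_\ell(\z))=0\quad\text{and}\quad Q_j(\balpha,X_1,\ldots,X_\ell)=P_j(X_1,\ldots,X_\ell).
$$
Because $Q_j$ is homogeneous and its specialisation at $\balpha$ has degree $1$, $Q_j$ itself has degree $1$ in the $X_i$'s. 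Writing $Q_j=\sum_{i=1}^\ell a_{i,j}(\z)X_i$ with $a_{i,j}\in\overline{\mathbb Q(\z)}_\balpha$, I obtain $\ell-t$ linear relations $\sum_i a_{i,j}(\z)g_i(\z)=0$ with coefficients in $\mathbb K$.

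Next I claim that these $\ell-t$ relations are linearly independent over $\mathbb K$. Indeed, the $(\ell-t)\times(\ell-t)$ minor of the coefficient matrix $(a_{i,j})$ obtained by keeping only the columns indexed by $i\in\{t+1,\ldots,\ell\}$ specialises at $\balpha$ to the identity matrix (since $P_j$ has coefficient $1$ on $X_j$ and $0$ on $X_k$ for $k>t$, $k\neq j$); hence its determinant, being a nonzero element of $\overline{\mathbb Q(\z)}_\balpha$, is nonzero in $\mathbb K$, so the rows are $\mathbb K$-linearly independent. Consequently the space of $\mathbb K$-linear relations among $g_1(\z),\ldots,g_\ell(\z)$ has dimension at least $\ell-t$, and the maximum number of $\mathbb K$-linearly independent $g_i(\z)$'s is at most $t<s$, contradicting the definition of $s$.

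I do not expect any serious obstacle beyond correctly exploiting the homogeneity clause of Theorem~\ref{thm: permanence}; the main point to keep clean is the passage from the independence of the $P_j$'s over $\overline{\mathbb Q}$ to the independence of the lifted relations over $\mathbb K$, which is handled by the nonvanishing of the relevant minor at $\balpha$.
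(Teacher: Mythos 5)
Your proof is correct and takes essentially the same route as the paper's: both lift each degree-one relation among the values via Theorem~\ref{thm: permanence} and then deduce the $\mathbb K$-linear independence of the lifted relations from the fact that their specialisations at $\balpha$ are the visibly independent forms $X_j-\sum_{i\le t}c_{i,j}X_i$. The only cosmetic differences are that the paper argues directly ($t\ge s$) rather than by contradiction, and justifies the independence of the lifted forms with an ``a fortiori'' where you spell out the nonvanishing minor.
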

   
   \begin{proof}
   Let $t$ denote the dimension of the $\Q$-vector space spanned by the
   numbers $g_i(\balpha)$, $1\leq i \leq \ell$. Without any loss of generality, we assume that
   $g_1(\balpha),\ldots,g_t(\balpha)$ are linearly independent over $\Q$. Then,
   for every $i$, $t<i\leq \ell$, there exist algebraic numbers 
   $\gamma_{i,1},\ldots,\gamma_{i,t}$ such that
   $$
   g_i(\balpha)=\gamma_{i,1}g_1(\balpha)+\cdots +
   \gamma_{i,t}g_t(\balpha)\, .
   $$
    By Theorem \ref{thm: permanence}, there exist
   $p_{i,1}(\z),\ldots,p_{i,\ell}(\z) \in \overline{\mathbb
   Q(\z)}_\balpha$ such that
   $$
   p_{i,1}(\z)g_1(\z)+\cdots +p_{i,\ell}(\z)g_\ell(\z)=0\, ,
   $$
   with $p_{i,i}(\balpha)=1$ and $p_{i,j}(\balpha)=0$ when $t<j\leq \ell$ and
   $j\neq i$. 
   Set 
    $$
    L_i(\z,X_1,\ldots,X_\ell):= p_{i,1}(\z)X_1+\cdots +p_{i,\ell}(\z)X_\ell=0\, , \; t<i\leq \ell\,, . 
    $$
    Note that the linear form $L_i(\balpha,X_1,\ldots,X_m)$ is equal to 
    $$
    X_i + \sum_{j=1}^t \gamma_{i,j} X_j \,.
    $$
    Hence the linear forms $L_i(\balpha,X_1,\ldots,X_m)$, $t<i\leq \ell$, are linearly independent 
    over $\Q$ and  {\it a fortiori}  the corresponding linear forms $L_i(\z,X_1,\ldots,X_\ell)$   
    are 
    linearly independent over $\mathbb K$. It follows that $t\geq s$, as wanted. 
    \end{proof}
   
  \begin{proof}[Proof of Corollary \ref{thm: Nishioka}] 
  Let $D\geq 0$ be an integer. As in \cite[p.\,231]{FN98}, we let
   $\varphi_\balpha(D)$ denote the dimension of the $\Q$-vector space spanned by the 
   monomials of total degree at most $D$ in $f_1(\balpha),\ldots,f_m(\balpha)$. 
   We also let $\varphi_\z(D)$ denote the dimension of the $\mathbb K$-vector space 
   spanned by the monomials of total degree at most $D$ in 
   $f_1(\z),\ldots,f_m(\z)$. Now, we observe that the monomials of total 
   degree at most $D$ in $f_1(\z),\ldots,f_m(\z)$ are also related by a linear 
   $T$-Mahler system for which $\balpha$ remains regular. Indeed, such a system
   can be obtained by first taking the system associated with the matrix $((1)\oplus A(\z))^{\otimes d}$, that is, 
   the $D$th power of Kronecker of
   the matrix $(1)\oplus A(\z)$, where $A(\z)$ is defined as in \eqref{eq:mahler}, and then applying some standard reduction procedure (see 
   \cite[Lemme 5]{Fa}).  
   Then, we infer from Lemma \ref{lem:linearLifting} that 
   \begin{equation}\label{eq: hilbertserre}
   \varphi_\balpha(D) \geq \varphi_\z(D),\, \forall D \in \N\, .
   \end{equation}
    By the Hilbert-Serre Theorem (see for example \cite[Theorem 42, p.\,235]{SZ60}), 
    for sufficiently large $D$,  
    $\varphi_\balpha(D)$ and $\varphi_\z(D)$ are polynomials in $D$ whose degree are respectively 
    ${\rm tr.deg}_{\Q}(f_1(\balpha),\ldots,f_m(\balpha))$ and 
    ${\rm tr.deg}_{\mathbb K}(f_1(\z),\ldots,f_m(\z))$. 
   Hence \eqref{eq: hilbertserre} implies that Inequality \eqref{ineg:degtrans} holds, as wanted. 
   \end{proof}

\subsection{Proof of Corollary \ref{cor:2}}

We first need the two following simple results.

\begin{lem}\label{lem:independantvariables}
Let $\z=(z_{1,1},\ldots,z_{1,n_1},z_{2,1},\ldots,z_{r,n_r})$ be a tuple of $n_1+\cdots+n_r$ distinct variables.
For every $i$, $1\leq i \leq r$, let $f_{i,1}(\z_i),\ldots,f_{i,m_i}(\z_i) \in \Q[[\z_i]]$ be some power series, 
where $\z_i=(z_{i,1},\ldots,z_{i,n_i})$. Then
\begin{eqnarray*}
{\rm tr.deg}_{\Q(\z)}\{f_{i,j}(\z_i) :  1\leq i \leq r,\, 1\leq j \leq m_i\} =\hspace{3.5cm}\\
\hspace{5.5cm}\sum_{i=1}^r {\rm tr.deg}_{\Q(\z_i)}\{ f_{i,j}(\z_i) : 1\leq j \leq m_i\}\, .
\end{eqnarray*}
\end{lem}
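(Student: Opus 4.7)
The inequality $\leq$ is immediate: choosing for each $i$ a transcendence basis $\boldsymbol{g}_i \subseteq \{f_{i,j}(\z_i)\}_j$ of size $s_i := {\rm tr.deg}_{\Q(\z_i)}\{f_{i,j}(\z_i) : 1 \leq j \leq m_i\}$, the union $\boldsymbol{g}_1 \cup \cdots \cup \boldsymbol{g}_r$ algebraically generates $\Q(\z)(\{f_{i,j}(\z_i)\}_{i,j})$ over $\Q(\z)$, so its transcendence degree is at most $s_1 + \cdots + s_r$. The bulk of the work is the reverse inequality, which will follow once I show that $\boldsymbol{g}_1 \cup \cdots \cup \boldsymbol{g}_r$ is algebraically independent over $\Q(\z)$. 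My plan is to prove this by induction on $r$, the case $r=1$ being trivial.

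For the inductive step, assume the conclusion holds for $r-1$ and suppose for contradiction that there is a nonzero polynomial $P \in \Q[\z, \X_1, \ldots, \X_r]$ such that $P(\z, \boldsymbol{g}_1(\z_1), \ldots, \boldsymbol{g}_r(\z_r)) = 0$ in $\Q[[\z]]$. I expand
\[
P(\z, \X_1, \ldots, \X_r) = \sum_{\boldsymbol{j}, \boldsymbol{i}} Q_{\boldsymbol{j}, \boldsymbol{i}}(\z_2, \ldots, \z_r, \X_2, \ldots, \X_r)\, \z_1^{\boldsymbol{j}}\, \X_1^{\boldsymbol{i}},
\]
with $Q_{\boldsymbol{j}, \boldsymbol{i}} \in \Q[\z_2, \ldots, \z_r, \X_2, \ldots, \X_r]$. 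Set
\[
R(\z_1, \X_1) := \sum_{\boldsymbol{j}, \boldsymbol{i}} Q_{\boldsymbol{j}, \boldsymbol{i}}\bigl(\z_2, \ldots, \z_r, \boldsymbol{g}_2(\z_2), \ldots, \boldsymbol{g}_r(\z_r)\bigr)\, \z_1^{\boldsymbol{j}}\, \X_1^{\boldsymbol{i}},
\]
viewed as an element of $\Q[[\z_2, \ldots, \z_r]][\z_1, \X_1]$. The vanishing of $P$ at $\X_i = \boldsymbol{g}_i(\z_i)$ translates into $R(\z_1, \boldsymbol{g}_1(\z_1)) = 0$ inside $\Q[[\z]]$.

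The key step, and the main technical point, is to deduce from this that $R$ is the zero polynomial. Using the canonical identification $\Q[[\z]] = \Q[[\z_2, \ldots, \z_r]][[\z_1]]$, I regroup $R(\z_1, \boldsymbol{g}_1(\z_1))$ as a power series in the ``outer'' variables $\z_2, \ldots, \z_r$ whose coefficients lie in $\Q[[\z_1]]$: each such coefficient is of the form $R_\lambda(\z_1, \boldsymbol{g}_1(\z_1))$ for some $R_\lambda \in \Q[\z_1, \X_1]$. Its vanishing forces every $R_\lambda(\z_1, \boldsymbol{g}_1(\z_1)) = 0$ in $\Q[[\z_1]]$, and the algebraic independence of $(\z_1, \boldsymbol{g}_1(\z_1))$ over $\Q$ (which is a consequence of $\boldsymbol{g}_1$ being a transcendence basis over $\Q(\z_1)$) then yields $R_\lambda = 0$ for all $\lambda$. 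Hence $R \equiv 0$, i.e., each coefficient $Q_{\boldsymbol{j}, \boldsymbol{i}}(\z_2, \ldots, \z_r, \boldsymbol{g}_2(\z_2), \ldots, \boldsymbol{g}_r(\z_r))$ vanishes in $\Q[[\z_2, \ldots, \z_r]]$.

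Applying the induction hypothesis to the families indexed $2, \ldots, r$, the tuple $(\z_2, \ldots, \z_r, \boldsymbol{g}_2(\z_2), \ldots, \boldsymbol{g}_r(\z_r))$ is algebraically independent over $\Q$, so each polynomial $Q_{\boldsymbol{j}, \boldsymbol{i}}$ is identically zero. This contradicts $P \neq 0$ and concludes the induction. The main obstacle, as indicated above, is the careful bookkeeping of the two ``layers'' of variables: one must view the single vanishing in $\Q[[\z]]$ simultaneously as a vanishing of a polynomial in $(\z_1, \X_1)$ over the coefficient ring $\Q[[\z_2, \ldots, \z_r]]$ and, after specialization, as a vanishing of a power series in $\z_2, \ldots, \z_r$ with coefficients in $\Q[[\z_1]]$, and to exploit the independence assumptions on each side in the right order.
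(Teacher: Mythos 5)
Your proof is correct. The paper dispatches this lemma in a single sentence (``the result follows directly from the fact that the sets of variables are disjoint''), and your induction on $r$, with the two-layer power-series bookkeeping and the repeated use of the tower property of algebraic independence, is a sound and complete elaboration of exactly that idea.
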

\begin{proof}
The result follows directly from the fact that the sets of variables
$\{z_{1,1},\ldots,z_{1,n_1}\},\ldots,\{z_{r,1},\ldots,z_{r,n_r}\}$  are disjoint.
\end{proof}

\begin{lem}\label{lem:transcendancedegree}
Let $\mathcal E_1, \ldots,\mathcal E_r,\mathcal F_1,\ldots, \mathcal F_r$ be nonempty finite sets of complex numbers
such that $\mathcal E_i \subset \mathcal F_i$ for every $i$, $1 \leq i \leq r$. Let us assume that
$$
{\rm tr.deg}_{\Q}\left(\bigcup_{i=1}^r \mathcal F_i\right) = \sum_{i=1}^r {\rm tr.deg}_{\Q}(\mathcal F_i)\, .
$$
Then
$$
{\rm tr.deg}_{\Q}\left(\bigcup_{i=1}^r \mathcal E_i\right) = \sum_{i=1}^r {\rm tr.deg}_{\Q}(\mathcal E_i)\, .
$$
\end{lem}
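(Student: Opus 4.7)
The plan is to reduce everything to a statement about transcendence bases. For each $i$, I would first choose a transcendence basis $B_i$ of $\Q(\mathcal E_i)$ over $\Q$ with $B_i \subseteq \mathcal E_i$, so that $|B_i| = {\rm tr.deg}_{\Q}(\mathcal E_i)$. Then I would extend each $B_i$ to a transcendence basis $C_i$ of $\Q(\mathcal F_i)$ over $\Q$ contained in $\mathcal F_i$, with $|C_i| = {\rm tr.deg}_{\Q}(\mathcal F_i)$. Since ${\rm tr.deg}_{\Q}(\bigcup_i \mathcal F_i) \leq |\bigcup_i C_i| \leq \sum_i |C_i|$ and the outer quantities are equal by assumption, the set $\bigcup_i C_i$ must be a transcendence basis of $\Q(\bigcup_i \mathcal F_i)$ over $\Q$, and moreover the $C_i$ are pairwise disjoint.

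In particular, $\bigcup_i C_i$ is algebraically independent over $\Q$. The set $\bigcup_i B_i$ is a subset of $\bigcup_i C_i$, so it too is algebraically independent over $\Q$, and the $B_i$ are pairwise disjoint. Hence
\[
{\rm tr.deg}_{\Q}\!\left(\bigcup_{i=1}^r \mathcal E_i\right) \;\geq\; \left|\bigcup_{i=1}^r B_i\right| \;=\; \sum_{i=1}^r |B_i| \;=\; \sum_{i=1}^r {\rm tr.deg}_{\Q}(\mathcal E_i).
\]
The reverse inequality is just the usual subadditivity of transcendence degree, which yields the desired equality.

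There is no real obstacle here: the argument is a straightforward manipulation of transcendence bases, combined with the elementary observation that algebraic independence is inherited by subsets. The only point that deserves verification is the disjointness of the $C_i$ (and therefore of the $B_i$), which is forced by the cardinality count ensuring that the equality in the hypothesis can only hold when no coincidences occur between the chosen bases.
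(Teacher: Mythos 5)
Your argument is correct and is essentially the paper's own proof in slightly streamlined form: the paper likewise chooses a transcendence basis $\mathcal E_i'\subset\mathcal E_i$, completes it to $\mathcal F_i'\subset\mathcal F_i$, and uses the hypothesis to force the union of the big bases to be algebraically independent (hence disjoint), from which the statement for the subsets follows. The one step worth spelling out — that the extension $B_i\subseteq C_i\subseteq\mathcal F_i$ exists and that the cardinality count forces both disjointness and independence of $\bigcup_i C_i$ — you have handled correctly.
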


\begin{proof}
Suppose first that all elements of each $\mathcal F_i$, $1 \leq i \leq r$, are algebraically independent.
By assumption, all elements of the set $\bigcup_{i=1}^r \mathcal F_i$ are algebraically independent.
Hence, all elements of the set $\bigcup_{i=1}^r \mathcal E_i$ are also algebraically independent,
and the lemma is proved.
Let us assume now that some elements of the family $\mathcal F_i$, $1 \leq i \leq r$, are algebraically dependent.
For every $i$, we choose a subset of algebraically independent elements
$\mathcal E'_i \subset \mathcal E_i$ such that ${\rm tr.deg}_{\Q}(\mathcal E'_i)
= {\rm tr.deg}_{\Q}(\mathcal E_i)$. Then, we complete the set $\mathcal E'_i$
in a set of algebraically independent elements
$\mathcal F'_i \subset \mathcal F_i$ such that
${\rm tr.deg}_{\Q}(\mathcal F'_i) = {\rm tr.deg}_{\Q}(\mathcal F_i)$.
From the first part of the proof, we have 
$$
{\rm tr.deg}_{\Q}\left(\bigcup_{i=1}^r \mathcal E'_i\right)
= \sum_{i=1}^r {\rm tr.deg}_{\Q}(\mathcal E'_i)\, .
$$
It follows that
$$
{\rm tr.deg}_{\Q}\left(\bigcup_{i=1}^r \mathcal E_i\right)
= {\rm tr.deg}_{\Q}\left(\bigcup_{i=1}^r \mathcal E'_i\right)
=  \sum_{i=1}^r {\rm tr.deg}_{\Q}(\mathcal E'_i)
= \sum_{i=1}^r {\rm tr.deg}_{\Q}(\mathcal E_i)\, ,
$$
which ends the proof.
\end{proof}

We are now ready to prove Corollary \ref{cor:2}.

\begin{proof}[Proof of Corollary \ref{cor:2}] 
We continue with the notation of Theorem \ref{thm: purete2} and \ref{thm: purity}.

Let us first assume that the assumptions of Theorem \ref{thm: purete2} are satisfied.   
We can gather all the linear Mahler systems \eqref{eq:mahleri}
into a big Mahler system of the form \eqref{eq:mahler}, where $A(\z)=A_1(\z_1)\oplus \cdots \oplus A_r(\z_r)$, 
$\z=(z_{1,1},\ldots,z_{r,m_r})$, and  $T:= T_1 \oplus \cdots \oplus T_r$. 
Then, we infer from assumptions (i) and (ii) of Theorem \ref{thm: purete2} and from Theorem \ref{thm:masser} that the pair $(T,\balpha)$ is admissible and that the point $\balpha=(\balpha_1,\ldots,\balpha_r)$ is regular   with respect to this $T$-Mahler system. Hence we can apply Corollary \ref{thm: Nishioka} to this larger system.  
We obtain that 
\begin{eqnarray}
\label{eq:Nishioka1}
{\rm tr.deg}_{\Q}\{f_{i,j}(\balpha_i) : 1\leq i \leq r,\, 1\leq j \leq m_i\} \hspace{4cm}\\
\nonumber \hspace{4cm} ={\rm tr.deg}_{\Q(\z)}\{f_{i,j}(\z_i) : 1\leq i \leq r,\, 1\leq j \leq m_i\}\, .
\end{eqnarray}
On the other hand,  applying Corollary \ref{thm: Nishioka} to 
the system \eqref{eq:mahleri}, for every $i$, $1\leq i \leq r$, we deduce that   
\begin{equation}
\label{eq:Nishioka2}
{\rm tr.deg}_{\Q}\{f_{i,j}(\balpha_i)\} : 1\leq j \leq m_i\}={\rm tr.deg}_{\Q(\z_i)}\{f_{i,j}(\z_i) : 1\leq j \leq m_i\}\,.
\end{equation}
 It follows from \eqref{eq:Nishioka1}, \eqref{eq:Nishioka2}, and Lemma \ref{lem:independantvariables} that
\begin{eqnarray}
\label{eq:trdegTotal}
{\rm tr.deg}_{\Q}\{f_{i,j}(\balpha_i) : 1\leq i \leq r,\, 1\leq j \leq m_i\} \hspace{3cm}\\
\nonumber \hspace{4cm} = \sum_{i=1}^r {\rm tr.deg}_{\Q}\{f_{i,j}(\balpha_i) : 1\leq j \leq m_i\}\, .
\end{eqnarray}
For every $i$, set $\mathcal F_i:= \{f_{i,j}(\balpha_i) : 1\leq j \leq m_i\}$. 
Using \eqref{eq:trdegTotal}, we can thus apply Lemma \ref{lem:transcendancedegree} to deduce that 
${\rm tr.deg}_{\Q}(\mathcal E)=\sum_{i=1}^r {\rm tr.deg}_{\Q}(\mathcal E_i)$, as wanted.

Now, let us assume that the assumptions of Theorem \ref{thm: purity} are satisfied.  
The proof is essentially the same. The only change occurs when establishing Equality \eqref{eq:Nishioka1}. 
We infer from assumptions (i) and (ii) of Theorem \ref{thm: purity} that we can apply Theorem \ref{thm: families}.
Then, using Theorem \ref{thm: families} and arguing as in the proof of Corollary \ref{thm: Nishioka},  we 
deduce that Equality \eqref{eq:Nishioka1} holds. The rest of the proof remains unchanged. 
\end{proof}

\subsection{Proof of Theorems  \ref{thm: purete2} and \ref{thm: purity}} 
We continue with the notation of Theorem \ref{thm: purete2} and \ref{thm: purity}. 
For every $i$, $1\leq i\leq r$, we set 
$$
\mathcal E_i:=(f_{i,\ell_1}(\balpha_i),\ldots,f_{i,\ell_{s_i}}(\balpha_i))\,,
$$
where $1\leq \ell_1< \ell_2 < \cdots < \ell_{s_i}\leq m_i$. Note that the inclusion
\begin{equation}\label{eq:inclusionindirect}
\sum_{i=1}^r {\rm Alg}_{\Q}(\mathcal E_i \mid \mathcal E) \subset {\rm Alg}_{\Q}(\mathcal E)\,
\end{equation}
is trivial.
Suppose that the assumptions of either Theorem \ref{thm: purete2} or Theorem \ref{thm: purity} hold. 
By Corollary \ref{cor:2}, we have 
\begin{equation}
\label{eq:egalitetranscdeg}
{\rm tr.deg}_{\Q}\left(\mathcal E \right) = \sum_{i=1}^r {\rm tr.deg}_{\Q}(\mathcal E_i)\, .
\end{equation}
Given an ideal $\I$, we let ${\rm ht}(\mathcal I)$ denote its height. Then, we have
\begin{equation}
\label{eq:height_transdeg}
\begin{array}{rcl}
 {\rm ht}\left({\rm Alg}_{\Q}(\mathcal E_i)\right) &=& s_i-{\rm tr.deg}_{\Q}(\mathcal E_i)\, ,\;\;\forall i,\; 1\leq i\leq r,\
 \\ {\rm ht}\left({\rm Alg}_{\Q}(\mathcal E)\right) &=& S-{\rm tr.deg}_{\Q}(\mathcal E)\, ,
\end{array}
\end{equation}
where $S:=s_1+\cdots+s_r$.
From \eqref{eq:egalitetranscdeg} and \eqref{eq:height_transdeg} we deduce that
$$
{\rm ht}\left({\rm Alg}_{\Q}(\mathcal E)\right) =\sum_{i=1}^r {\rm ht}\left({\rm Alg}_{\Q}(\mathcal E_i)\right)\, .
$$
Set $\mathcal I := \sum_{i=1}^r {\rm Alg}_{\Q}(\mathcal E_i \mid \mathcal E)$. Then the  isomorphism\footnote{See, for instance, \cite[Chap. I, Exercise 3.15]{Har}.}
$$
\Q[\X_1]\big/{\rm Alg}_{\Q}(\mathcal E_1) \otimes_{\Q} \cdots \otimes_{\Q} \Q[\X_r]\big /{\rm Alg}_{\Q}(\mathcal E_r)  \cong \Q[\X]\big/{\mathcal I}\,
$$
implies that $\mathcal I$ is a prime ideal. Indeed, the tensor product of integral domains,
over an algebraically closed field, is an integral domain.
Furthermore, this isomorphism also gives that
${\rm ht}(\mathcal I)=\sum_{i=1}^r{\rm ht}({\rm Alg}_{\Q}(\mathcal E_i))$. It follows that ${\rm Alg}_{\Q}(\mathcal E)$ and $\sum_{i=1}^r {\rm Alg}_{\Q}(\mathcal E_i \mid \mathcal E)$ are both prime ideals with the same height. By \eqref{eq:inclusionindirect}, these two ideals are equal.  This ends the proof.
\qed
\section{Proof of Theorem \ref{thm: main}} \label{sec: theorem1}

In this section, we show how to deduce Theorem \ref{thm: main} from  
 the two purity theorems. We first prove the following lemma.

\begin{lem}\label{lem: adapt}
Let $f(z)$ be a $q$-Mahler function and $\alpha$ be a nonzero algebraic number 
such that $f(z)$ is well-defined at $\alpha$. Then there exists a $q$-Mahler function $g(z)$ 
such that the following properties hold. 

\begin{itemize}
\item[{\rm (a)}]  $g(\alpha)=f(\alpha)$.

\medskip

\item[{\rm (b)}] The function $g(z)$ is the first coordinate of a vector solution to a $q^l$-Mahler 
system, say  
\begin{equation*}
\left(\begin{array}{c} g_1(z)=g(z) \\ \vdots \\ g_{m}(z) \end{array}\right)
=B(z)\left(\begin{array}{c} g_{1}(z^{q^l}) \\ \vdots \\ g_{m}(z^{q^l}) \end{array}\right)\, .
\end{equation*}
for some integer $l>0$.

\medskip

\item[{\rm (c)}]  The point $\alpha$ is regular with respect to this system. 

\end{itemize}
\end{lem}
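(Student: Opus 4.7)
Since $f$ is $q$-Mahler, pick a minimal linear relation
$$\sum_{i=0}^{m} p_i(z)\,f(z^{q^i})=0$$
with $p_0, p_m \in \Q[z]\setminus\{0\}$. Setting $\mathbf f(z)=(f(z),f(z^q),\ldots,f(z^{q^{m-1}}))^T$ yields a $q$-Mahler system $\mathbf f(z)=A(z)\,\mathbf f(z^q)$ with companion matrix $A(z)\in\Gl_m(\Q(z))$; the bad set $S=\{z:A(z)\text{ is undefined or singular}\}$ is the finite zero locus of $p_0\,p_m$. Because $f(\alpha)$ is well-defined, the orbit $\mathcal O=\{\alpha^{q^k}:k\geq 0\}$ lies in the disk of convergence of $f$, and $S\cap\mathcal O$ is finite (when $\alpha$ is not a root of unity, $\mathcal O$ is infinite and discrete; otherwise $\mathcal O$ itself is finite). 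Pick $K\geq 0$ so that $\alpha^{q^k}\notin S$ for every $k\geq K$, and fix $l\geq K$.

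Taking $g(z):=f(z)$ immediately yields $g(\alpha)=f(\alpha)$, giving (a). I would then realize $g$ as the first coordinate of the extended vector
$$\mathbf g(z):=(f(z),f(z^q),\ldots,f(z^{q^{l+m-1}}))^T,$$
of length $l+m$. The aim is to produce a matrix $B(z)\in \Gl_{l+m}(\Q(z))$ satisfying $\mathbf g(z)=B(z)\,\mathbf g(z^{q^l})$ (giving (b)) and regular at each $\alpha^{q^{lk}}$ for $k\geq 0$ (giving (c)). The last $m$ rows of $B$ are tautological: the last $m$ entries of $\mathbf g(z)$ coincide with the first $m$ entries of $\mathbf g(z^{q^l})$. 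The first $l$ rows must express $f(z^{q^j})$, $0\leq j<l$, as $\Q(z)$-linear combinations of entries of $\mathbf g(z^{q^l})$, built by iterating the Mahler relations $\sum_i p_i(z^{q^j})\,f(z^{q^{j+i}})=0$ and chaining the resulting substitutions.

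Such a matrix $B(z)$ is far from unique: the entries of $\mathbf g(z^{q^l})$ themselves satisfy $l$ independent Mahler relations at shifted arguments, so one may modify $B(z)$ by any matrix whose rows annihilate $\mathbf g(z^{q^l})$ identically. I would exploit this freedom to ensure both that $B(z)$ is invertible as a rational matrix and that $B(\alpha^{q^{lk}})$ is defined and invertible for every $k\geq 0$. For $k\geq 1$, the construction involves only iterates of $A$ at orbit points of depth $\geq l>K$, which all lie outside $S$, so regularity is automatic.

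The main obstacle, and the technical heart of the proof, is achieving regularity at $\alpha$ itself (the case $k=0$): a priori, $B(\alpha)$ depends on values of $A$ at early orbit points $\alpha^{q^j}$, $0\leq j<l$, some of which may lie in $S$. The resolution is to chain the substitutions defining the first $l$ rows of $B$ in a flexible way: at each problematic index $j$, instead of blindly solving the Mahler equation at $z^{q^j}$ for $f(z^{q^j})$ using the coefficient $p_0(z^{q^j})$ (which may vanish at $\alpha$), one uses a shifted Mahler equation at $z^{q^{j'}}$ whose coefficient of $f(z^{q^j})$ does not vanish at $\alpha^{q^{j'}}$, and then absorbs the resulting redundancy via the freedom described above. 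Carrying out this case analysis and verifying that the resulting $B(\alpha)$ is both well-defined and invertible is the step I expect to require the most care; minimality of the relation $\sum_i p_i(z) f(z^{q^i})=0$ is the key input preventing the simultaneous vanishings that would obstruct the construction.
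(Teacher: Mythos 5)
There is a genuine gap, and it sits exactly where you say you expect "the most care": making $B(\alpha)$ well-defined \emph{and invertible}. Note first that the lemma only asks for $g(\alpha)=f(\alpha)$, not $g=f$; by insisting on $g:=f$ you are forcing yourself to desingularize the system at the point $\alpha$ itself, which is a substantially harder problem than the one the lemma actually poses, and your proposed mechanism for it does not work as described. If $p_0(\alpha^{q^{j}})=0$ for some $0\leq j<l$, you suggest solving a \emph{shifted} relation $\sum_i p_i(z^{q^{j'}})f(z^{q^{j'+i}})=0$ for $f(z^{q^{j}})$ with $j=j'+i_0$ and $i_0>0$; but that expresses $f(z^{q^{j}})$ in terms of iterates $f(z^{q^{j'}}),\ldots$ with $j'<j$, i.e.\ it moves in the \emph{wrong} direction (towards lower iterates rather than towards the entries of $\mathbf g(z^{q^l})$), and the resulting circularity is never resolved. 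Nor is minimality of the relation by itself known to prevent the obstruction: the whole difficulty of removing apparent singularities of Mahler systems at a prescribed orbit point is the content of a nontrivial theorem, not a case analysis one can wave at.

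The paper's proof avoids the problem rather than solving it. After disposing of the case $f(\alpha)\in\Q$ (take $g$ constant), it writes $f=f_1$ in a companion system with $f_1,\ldots,f_m$ linearly independent over $\Q(z)$ (this is where minimality is used) and invokes \cite[Theorem 1.10]{AF1} --- an imported, nontrivial result --- to get an $l$ such that $\alpha^{q^l}$ is regular for the iterated system $\mathbf f(z)=A_l(z)\mathbf f(z^{q^l})$ and $A_l$ has no pole at $\alpha$. Crucially, $A_l(\alpha)$ need not be invertible; instead one sets $g(z):=a_1(\alpha)f_1(z^{q^l})+\cdots+a_m(\alpha)f_m(z^{q^l})$, where $(a_1(z),\ldots,a_m(z))$ is the first row of $A_l(z)$. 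Then $g(\alpha)=f(\alpha)$ by the iterated relation, $g\neq f$ in general, the vector $(a_i(\alpha))_i$ is nonzero because $f(\alpha)$ is transcendental (this is why the algebraic case must be split off), and a \emph{constant} gauge transformation of the system satisfied by $(f_i(z^{q^l}))_i$ realizes $g$ as a first coordinate. Regularity of the new system at $\alpha$ then only requires regularity of $A_l$ at $\alpha^{q^l},\alpha^{q^{2l}},\ldots$ --- the behavior of the original system at $\alpha$ itself is never needed. To repair your argument you would either have to prove a genuine desingularization-at-$\alpha$ statement (not available, and not what \cite[Theorem 1.10]{AF1} gives) or abandon $g=f$ and use the value-matching flexibility as the paper does.
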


\begin{proof} 
We first note that if $f(\alpha)$ is algebraic, the lemma is trivial for we can choose 
$g(z):= f(\alpha)$ to be constant.  We assume now that $f(\alpha)$ is transcendental. 
Using a minimal $q$-Mahler equation for $f(z)$, we deduce that $f(z)$ is the first coordinate of some 
$q$-Mahler system, say 
\begin{equation}
\label{eq:Mahlerunevariable}
\left(\begin{array}{c} f_{1}(z) \\ \vdots \\ f_{m}(z) \end{array}\right)
=A(z)\left(\begin{array}{c} f_{1}(z^q) \\ \vdots \\ f_{m}(z^q) \end{array}\right)\, ,
\end{equation}
where $f_1(z)=f(z),\ldots,f_m(z)$ are linearly independent over $\Q(z)$. 
Since the functions $f_1(z),\ldots,f_m(z)$ are linearly independent, 
we infer from \cite[Theorem 1.10]{AF1} that there exists an integer $l$ 
such that the numbers $\alpha^{q^l}$ is regular with respect to the system
\begin{equation}
\label{eq:Mahleriteree}
\left(\begin{array}{c} f_{1}(z) \\ \vdots \\ f_{m}(z) \end{array}\right)
=A_l(z)
\left(\begin{array}{c}f_{1}(z^{q^l}) \\ \vdots \\ f_{m}(z^{q^l}) \end{array}\right)\, ,
\end{equation}
where
$$
A_l(z)=A(z)A(z^q)\cdots A(z^{q^{l-1}})\, .
$$
Furthermore, this theorem ensures that $\alpha$ is not a pole of the matrix $A_l(z)$. 
Let $(a_1(z),\ldots,a_{m}(z))$ denote the first row of $A_l(z)$. 
Set  
\begin{equation}\label{eq:g(z)}
g(z)= a_{1}(\alpha)f_1(z^{q^l})+\cdots+ a_{m}(\alpha)f_m(z^{q^{l}})\, . 
\end{equation}
Note that $g(z)$ is a $q$-Mahler function for it is obtained as a linear combination over $\Q$ 
of $q$-Mahler functions\footnote{Indeed, if $f(z)$ is a $q$-Mahler function then $f(z^{q^l})$ is 
clearly a $q$-Mahler function too.}.
Since $f(\alpha)$ is transcendental,  the vector $(a_{1}(\alpha),\ldots,a_m(\alpha))$ 
is nonzero. 
Then applying a suitable constant gauge transformation to the Mahler system associated 
with the matrix $A_l(z^{q^l})$, we can obtain a Mahler system which has a solution vector with 
$g(z)$ as first coordinate.  Furthermore, 
since the point $\alpha^{q^l}$ is regular w.r.t.\  \eqref{eq:Mahleriteree}, 
$\alpha$ is a regular point w.r.t.\ this new system.  
 On the other hand, we infer from \eqref{eq:Mahleriteree} and \eqref{eq:g(z)} that $g(\alpha)=f(\alpha)$, 
 as wanted. 
 \end{proof}

\begin{proof}[Proof of Theorem \ref{thm: main}]
We keep on with the notation of Theorem \ref{thm: main}. 
We assume that no number among $f_1(\alpha_1),\ldots,f_r(\alpha_r)$ belongs to $\mathbb K$, so that it remains to prove that $f_1(\alpha_1),\ldots,f_r(\alpha_r)$ are algebraically independent over $\Q$.  
By \cite[Corollaire 1.8]{AF1}, our assumption implies that the numbers $f_1(\alpha_1),\ldots,f_r(\alpha_r)$ are all transcendental. For every $i$, $1\leq i\leq r$, we let  $z_i$ denote an indeterminate.

By Lemma \ref{lem: adapt}, with each pair $(f_i,\alpha_i)$, we can associate a $q_i$-Mahler function $g_i(z_i)$ 
such that 
\begin{equation}\stepcounter{equation}
\label{eq:mahler1Vi}\tag{\theequation .$i$}
\left(\begin{array}{c} g_{i,1}(z_i)=g_i(z_i) \\ \vdots \\ g_{i,m_i}(z_i) \end{array}\right)
=B_i(z_i)\left(\begin{array}{c} g_{i,1}\left(z_i^{q_i^{l_i}}\right) \\ \vdots \\ g_{i,m_i}\left(z_i^{q_i^{l_i}}\right) \end{array}\right)\, ,
\end{equation}
$g_i(\alpha_i)=f_i(\alpha_i)$ and $\alpha_i$ is regular w.r.t.\ \eqref{eq:mahler1Vi}.  

Let us first prove Case (i) of Theorem \ref{thm: main}.  
Let us divide the natural numbers $1,\ldots,r$ into $s$ classes $\mathcal I_1=\{i_{1,1},\ldots,i_{1,\nu_1}\},\ldots,\mathcal I_s=\{i_{s,1},\ldots,i_{s,\nu_s}\}$, such that 
$i$ and $j$ belong to the same classe if and only if  $q_i$ and $q_j$ are multiplicatively dependent. 
Iterating each system \eqref{eq:mahler1Vi}   
a suitable number of times, we can assume without loss of generality that $q_i^{l_i}=q_j^{l_j}:=\rho_k$ 
whenever 
$i$ and $j$ belong to $\mathcal I_k$.   
Set $\mathcal E := (g_1(\alpha_1),\ldots,g_r(\alpha_r))$ and 
\begin{equation*} 
\mathcal E_k:=(g_{i_{k,1}}(\alpha_{i_{k,1}}),\ldots,g_{i_{k,\nu_k}}(\alpha_{i_{k,\nu_k}}))
 \,,\; \forall k\in\{1,\ldots,s\}\,. 
\end{equation*}  
Given $k\in\{1,\ldots,s\}$, we consider the Mahler system in the variables $z_i,i\in \mathcal I_k$,   
associated with  the matrix $\oplus_{i\in \mathcal I_k}B_i(z_i)$ and 
the transformation $T_k=\rho_k{\rm I}_{\nu_k}$, where we let ${\rm I}_n$ denote the identity matrix of size $n$.   In this way, we have converted our $r$ Mahler systems in one variable into 
$s$ Mahler systems, each having respectively $\nu_1,\ldots,\nu_s$ variables.  
Furthermore, since by assumption the algebraic numbers $\alpha_1,\ldots,\alpha_r$ 
are multiplicatively independent, we deduce that each pair 
$$(T_k,\balpha_k:=(\alpha_{i_{k,1}},\ldots,\alpha_{i_{k,\nu_k}}))\,,\; 1\leq k\leq s\, ,$$
 is admissible. 
Finally, the point $\balpha_k$ is regular for each 
$\alpha_i$ is regular w.r.t.\ \eqref{eq:mahler1Vi}.  Since, by construction, the numbers 
$\rho(T_1)=\rho_1,\ldots,\rho(T_s)=\rho_s$ are pairwise multiplicatively independent, we  
can apply our second purity theorem, Theorem \ref{thm: purity}, to these $s$ Mahler systems. 
We deduce that 
\begin{equation}\label{eq:Es}
{\rm Alg}_{\Q}(\mathcal E) = \sum_{k=1}^s {\rm Alg}_{\Q}(\mathcal E_k \mid \mathcal E)\, .
\end{equation}
Now, let us fix $k\in\{1,\ldots,s\}$. 
Since the numbers $\alpha_i,i\in \mathcal I_k$,    
are multiplicatively independent, we can apply our first purity theorem, Theorem \ref{thm: purete2}, to the 
$\nu_k$ distinct Mahler systems \eqref{eq:mahler1Vi}, with $i\in\mathcal I_k$.  For every $i\in\mathcal I_k$,  
set $\mathcal E_{k,i}:=(g_i(\alpha_i))$. Since  $g_i(\alpha_i)=f_i(\alpha_i)$ is transcendental, we 
have ${\rm Alg}_{\Q}(\mathcal E_{k,i}) =\{0\}$ for every $i\in \mathcal I_k$. We thus 
deduce from Theorem \ref{thm: purete2} 
that 
$${\rm Alg}_{\Q}(\mathcal E_k) = \sum_{i\in\mathcal I_k}
 {\rm Alg}_{\Q}(\mathcal E_{k,i} \mid \mathcal E_k)=\{0\} \,.
 $$ 
Since this holds for every $k$, $1\leq k\leq s$, 
 it follows from \eqref{eq:Es}, that ${\rm Alg}_{\Q}(\mathcal E)  =\{0\}$. That is,  
 $f_1(\alpha_1),\ldots,f_r(\alpha_r)$ are algebraically independent over $\Q$.   

Now, let us prove Case (ii) of Theorem \ref{thm: main}. 
As previously, we associate with each pair $(f_i(z),\alpha_i)$ a function $g_i(z)$ satisfying 
the conditions of Lemma \ref{lem: adapt}.  Since the natural numbers $q_i$ are pairwise multiplicatively independent, 
we can apply our second purity theorem, Theorem \ref{thm: purity}, to the Mahler systems associated with each 
$g_i(z)$ in Lemma \ref{lem: adapt}. 
Setting 
$$\mathcal E:= (g_1(\alpha_1),\ldots,g_r(\alpha_r)) \, 
$$ 
and $\mathcal E_i:=(g_i(\alpha_i))$, $1\leq i\leq r$, we deduce that 
$$
{\rm Alg}_{\Q}(\mathcal E) = \sum_{i=1}^r {\rm Alg}_{\Q}(\mathcal E_i \mid \mathcal E)\, .
$$
Again, since 
by assumption $g_i(\alpha_i)=f_i(\alpha_i)$ is transcendental, we get that 
${\rm Alg}_{\Q}(\mathcal E_i \mid \mathcal E)=0$ for every $i$, $1\leq i\leq r$. 
This shows that ${\rm Alg}_{\Q}(\mathcal E) =\{0\}$, and  
we conclude, as previously, that $f_1(\alpha_1),\ldots,f_r(\alpha_r)$ are algebraically 
independent over $\Q$.  
\end{proof}

\appendix
\section{Representing numbers in independent bases}\label{preamble}

In this appendix, we show how our main results apply to certain problems concerning expansions 
of numbers in multiplicatively independent bases. In particular, we state  and prove 
Conjectures \ref{conj: weakv}, \ref{conj: strongv}, and Corollary \ref{coro: fonctions},  
which were our initial goal.  
All these  results are deduced from Theorem \ref{thm: main}. 

\subsection{The dynamical point of view: Furstenberg's conjecture}

In the late 1960s, Furstenberg \cite{Fu67,Fu70} 
suggested a series of conjectures whose aim is to capture  
the heuristic which has been alluded to only in very vague terms at the beginning of this paper. 
These conjectures, which became famous, take place in a dynamical setting. 
This does not come as a great surprise for there is a well-known dictionary 
transferring combinatorial properties of 
the expansion of a real number $x$ in an integer base $q$ in terms of 
dynamical properties of the orbit of $\{x\}$ under the map $T_q$ defined on 
$\mathbb R/\mathbb Z$ by $x\mapsto qx$.  
We let $\mathcal O_q(x)$ denote the forward orbit of $x$ under $T_q$, that is,  
$$
\mathcal O_q(x):= \left\{x,T_q(x),T^2_q(x),\ldots\right\} \,.
$$
If $X\subset \mathbb R$, we let $\dim_H(X)$ denote the Hausdorff dimension of $X$ 
and $\overline{X}$ its closure.  
One of Furstenberg's conjecture \cite{Fu70} reads as follows. 

\begin{conj}[Furstenberg]\label{conj: F} 
Let $p$ and $q$ be two multiplicatively independent natural numbers, and 
let $x\in[0,1)$ be a real number. Then 
$$
\dim_H \overline{\mathcal O_p(x)} + \dim_H \overline{\mathcal O_q(x)} \geq 1\,,
$$
unless $x$ is rational. 
\end{conj}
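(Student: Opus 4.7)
The statement is Furstenberg's orbit-closure dimension conjecture, which the excerpt itself records as \emph{totally out of reach of the current methods}. The Mahler-systems apparatus developed in the body of the paper is not designed to attack this dynamical conjecture directly; it is deployed instead on the automata-theoretic surrogates (the conjectures stated further down in the appendix) where the expansion of $x$ is assumed to lie in a finite state complexity class. Consequently, any plan I write here is speculative, and I should be upfront that I do not expect it to terminate in a complete proof.

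The approach I would attempt is to combine entropy methods with the additive-combinatorial machinery recently developed by Hochman, Shmerkin, and Wu. First, extract from the empirical measures supported on $\mathcal O_p(x)$ and $\mathcal O_q(x)$ weak-$\ast$ subsequential limits $\mu_p$ and $\mu_q$; then $\mu_p$ is $T_p$-invariant, $\mu_q$ is $T_q$-invariant, and a routine mass-distribution argument gives $\dim \mu_\ast \le \dim_H \overline{\mathcal O_\ast(x)}$ for $\ast\in\{p,q\}$. Assume for contradiction that $\dim_H \overline{\mathcal O_p(x)} + \dim_H \overline{\mathcal O_q(x)} < 1$; then neither measure is Lebesgue, and by the variational principle each has entropy strictly below $\log p$, respectively $\log q$. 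The idea would then be to exploit the fact that $x$ lies in the common intersection $\overline{\mathcal O_p(x)} \cap \overline{\mathcal O_q(x)}$, combined with Shmerkin's inverse theorem for $L^q$-norms of self-similar measures and a Hochman-type entropy-dimension inequality, to force a convolution structure on the pair $(\mu_p,\mu_q)$ near $x$. A final application of the Shmerkin--Wu projection theorem to $T_p$- and $T_q$-invariant supports would then give the desired dimension lower bound, contradicting the assumption and leaving only the rational case.

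The genuine obstacle -- and the reason I do not expect this plan to go through as stated -- is that the link between the two orbit closures is mediated only by the single common point $x$, whereas the Shmerkin--Wu theorems and their variants require genuine independent invariance on both sides. A mere common-point coincidence is far too weak an input for any of the currently available transversality or inverse theorems. To close this gap one would need a fundamentally new rigidity principle, perhaps a topological-category strengthening of Rudolph's measure-rigidity theorem or a quantitative refinement of Host's equidistribution theorem that is driven by orbit coincidences rather than joint invariance. I am not aware of any such principle, and in particular nothing in the multivariable Mahler theory developed in the preceding sections supplies it; that theory transfers algebraic independence from functions to values, which is a very different kind of input from the metric rigidity needed here. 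Accordingly, the value of the sketch above is only in isolating where the genuinely new ingredient would have to enter.
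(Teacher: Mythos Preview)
Your assessment is correct: the statement is labeled a \emph{Conjecture} in the paper, and the paper does not prove it. The appendix uses Furstenberg's conjecture only as motivation, explicitly describing it as ``totally out of reach of the current methods'' and noting that the results of Shmerkin and Wu show only that the set of exceptions has Hausdorff dimension zero. There is therefore no proof in the paper to compare your proposal against.

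Your speculative sketch is honest about its own limitations, and your diagnosis of the obstruction is accurate: the only link between the two orbit closures is the single common point $x$, and none of the Hochman--Shmerkin--Wu machinery converts a pointwise coincidence into the joint-invariance input those theorems require. You are also right that the Mahler-method results of the paper are irrelevant here; they concern algebraic independence of values of $M$-functions and have no bearing on Hausdorff dimensions of orbit closures under $T_p$ and $T_q$. In short, there is no gap to name beyond the one you have already named yourself: the conjecture is open, and nothing in this paper changes that.
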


This conjecture has wonderful consequences about expansions of both real and natural numbers. 
It beautifully expresses the expected balance between the complexity of expansions of an irrational real number 
in two multiplicatively independent bases: 

\smallskip

 \emph{If $x$ has a simple expansion in base $p$, then it should have a complex expansion in base $q$. }
 
 \smallskip
 
It is easy to see that Conjecture \ref{conj: F} holds true generically.  Indeed, 
endowed with the Haar measure, the topological dynamical system 
$(T_q, \mathbb R/\mathbb Z)$ becomes ergodic, 
and it follows from the ergodic theorem that 
$$
\dim_H \overline{\mathcal O_p(x)} =\dim_H \overline{\mathcal O_q(x)}=1\,, 
$$
for almost all real numbers $x$ in $[0,1)$.  
In fact, all the strength of Conjecture \ref{conj: F} takes shape when $x$ has a 
simple expansion in one of the two bases.  
Defining the entropy of $x$ with respect to the base $q$  
 as the topological entropy of the 
dynamical system $(T_q, \overline{\mathcal O_q(x)})$, 
Conjecture \ref{conj: F} implies that if $x$ has zero entropy in base $p$, then it has a dense orbit under 
$T_q$. 

 Let us illustrate this with a concrete example. The binary Thue-Morse number $\tau$ is defined as follows. 
 Its $n$th binary digit is equal to $0$ if the sum of digits in the binary expansion of $n$ is even, 
and to $1$ otherwise. It is somewhat puzzling that  its decimal expansion 
$$
\langle \tau \rangle_{10} = 0.412\, 454 \, 033 \,640\, 107\, 597\, 783\, 361\, 368\, 258\, 455\, 283\, 089\cdots 
$$
seems unpredictable, while its binary expansion 
$$
\langle \tau \rangle_2 = 0.011\, 010\, 011\, 001\, 011\, 010\, 010\, 110\, 011\, 010\, 011\, 001\, 011 \cdots\, 
$$
is, by definition, so regular. 
This intriguing phenomenon would be nicely explained 
by Conjecture \ref{conj: F}.  
Indeed, since $\tau$ has zero entropy in base $2$, 
 it should have a dense orbit under $T_{10}$, meaning that  
all blocks of digits should occur in its decimal expansion.  

Other astonishing consequences of Conjecture \ref{conj: F} concern expansions of natural numbers.  
For instance, using an elementary construction, Furstenberg shows in \cite{Fu70} how to deduce from  
Conjecture \ref{conj: F} that any finite block of digits 
occurs in the decimal expansion of $2^n$, as soon as $n$ is large enough. 
Note that, in the same vein, a conjecture of Erd\"os claims that the digit $2$ occurs in the ternary 
expansion of $2^n$ for all  $n>8$ (see, for instance, \cite{Lag}).   

Recently, Shmerkin \cite{Sh19} and Wu \cite{Wu19} proved that  
the set of  exceptions to Conjecture \ref{conj: F} has Hausdorff dimension zero. 
Unfortunately, this remarkable results does not tell us anything about expansions of real numbers 
with zero entropy in some base. Indeed, 
the set of all such real numbers 
has Hausdorff dimension zero \cite{MM12}. Though the works of Shmerkin and Wu mark 
significant progress towards Conjecture \ref{conj: F}, the latter remains far out of the reach of current methods.  
Even worse, we are afraid that their result could be essentially the best dynamical methods 
have to say about this conjecture.  

\subsection{The computational point of view: from finite automata to Mahler's method}\label{sec:preambule} 

From a computational point of view, there is another relevant notion of 
 simple number, 
namely the notion of \emph{automatic real number} (see \cite[Chap.\ 13]{AS03}).  
While  computable numbers can be generated by general Turing machines,   
automatic numbers are those whose expansion in some base  
can be generated by a finite automaton. Broadly speaking, a finite automaton is  a Turing machine 
without any memory tape, all its memory being stored in the finite state control. 
This severe restriction justifies that these numbers are considered as especially simple.  
For example, the Thue-Morse number $\tau$ is automatic in base $2$. 
We refer the reader to \cite{ACL} and the references therein for a discussion on these different models 
of computation.  In this new framework, our general heuristic naturally leads to the following conjecture. 

\begin{conj}\label{conj: weakv} Let $p$ and $q$ be two multiplicatively independent natural 
numbers. A real number cannot be automatic in both bases $p$ and $q$,  unless it is rational. 
\end{conj}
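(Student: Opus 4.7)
The approach is to convert both automatic expansions into $M$-functions via Cobham's theorem and then invoke Theorem \ref{thm: main}. Specifically, I argue by contradiction: suppose that an irrational real number $\xi$ is automatic in both bases $p$ and $q$. After subtracting its integer part, which is rational and thus does not affect automaticity, I may assume $\xi \in [0,1)$. Writing its base-$p$ expansion as $\xi = \sum_{n\geq 0} a_n p^{-n-1}$ with $(a_n)$ a $p$-automatic sequence valued in $\{0,\ldots,p-1\}$, Cobham's theorem \cite{Co68} ensures that the generating function $f(z) := \sum_{n\geq 0} a_n z^n$ is a $p$-Mahler function. Since its coefficients lie in $\mathbb Q$, we have $f \in \mathbb Q\{z\}$; since they are bounded, $f$ has radius of convergence at least $1$, and a direct computation gives $f(1/p) = p\xi$. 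In the same way, one obtains a $q$-Mahler function $g(z) \in \mathbb Q\{z\}$ satisfying $g(1/q) = q\xi$.

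Next, I apply Theorem \ref{thm: main} with the choices $\mathbb K := \mathbb Q$, $r := 2$, $(f_1, f_2) := (f, g)$, $(q_1, q_2) := (p, q)$, and $(\alpha_1, \alpha_2) := (1/p, 1/q)$. The multiplicative independence hypothesis on $p$ and $q$ is precisely condition (ii) of that theorem, while both $\alpha_i$ lie in $(0,1)\cap\mathbb Q$ and the corresponding $f_i$ are well-defined there. The two values $f(1/p) = p\xi$ and $g(1/q) = q\xi$ satisfy the nontrivial polynomial relation $q X_1 - p X_2 = 0$, and are therefore algebraically dependent over $\Q$. Hence the alternative offered by Theorem \ref{thm: main} forces one of them to lie in $\mathbb K = \mathbb Q$; either $p\xi$ or $q\xi$ is rational, and in either case $\xi$ itself is rational, contradicting our standing assumption.

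No real obstacle remains at this level: the entire analytic and algebraic difficulty has been shifted into Theorem \ref{thm: main}, and once that tool is available the deduction is essentially bookkeeping. The only design choice worth highlighting is that of taking $\mathbb K = \mathbb Q$ rather than the larger field $\Q$. This is what allows the integrality of the digits in the two base expansions to be leveraged, upgrading the conclusion of Theorem \ref{thm: main} from the mere algebraicity of $\xi$ to its genuine rationality, which is exactly what Conjecture \ref{conj: weakv} demands.
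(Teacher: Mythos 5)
Your proof is correct and follows essentially the same route as the paper, which deduces Conjecture \ref{conj: weakv} from the stronger Conjecture \ref{conj: strongv}, itself obtained by feeding Cobham's correspondence into Theorem \ref{thm: main}. One caveat: the paper reads ``automatic in base $p$'' as meaning the base-$p$ digit string is $k$-automatic for some $k\geq 2$ not necessarily equal to $p$, so under that reading your generating functions are $k_1$- and $k_2$-Mahler with no control on the multiplicative relations between $k_1$ and $k_2$, and you should invoke hypothesis (i) of Theorem \ref{thm: main} (the points $1/p$ and $1/q$ are multiplicatively independent) rather than hypothesis (ii) --- a hypothesis your setup already satisfies, so the argument goes through unchanged.
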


This conjecture turns out to be a very special case of Conjecture \ref{conj: F} for  
being automatic in some base implies having zero entropy in that base.   
Nevertheless, Conjecture \ref{conj: weakv} remains quite challenging since, 
to date, not a single real number has been proved to be at once automatic in some base and 
not automatic in another one.  We also mention that a weaker version of Conjecture \ref{conj: weakv}  
appears as Open Problems 7 in \cite[Chap.\ 13]{AS03}. 

With a more Diophantine flavor, Conjecture 
\ref{conj: weakv} can be strongly strengthened as follows.   

\begin{conj}\label{conj: strongv}
Let $r\geq 1$ be an integer. Let $b_1,\ldots,b_r$ be multiplicatively independent positive integers, 
and, for every $i$, $1\leq i \leq r$, 
let  $\xi_i$ be a real number that is automatic in base $b_i$.   
Then the numbers $\xi_1,\ldots,\xi_r$ are algebraically independent over $\Q$, unless one of them is rational. 
\end{conj}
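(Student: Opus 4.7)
The plan is to deduce Conjecture \ref{conj: strongv} directly from Theorem \ref{thm: main}, Case~(ii), via Cobham's translation between automatic sequences and Mahler functions. For each $i$, write the base-$b_i$ expansion of the fractional part of $\xi_i$ as $\{\xi_i\} = \sum_{n\geq 0} a_{i,n} b_i^{-(n+1)}$, where $a_{i,n}\in\{0,1,\ldots,b_i-1\}$. The assumption that $\xi_i$ is automatic in base $b_i$ means precisely that the digit sequence $(a_{i,n})_{n\geq 0}$ is $b_i$-automatic, hence by the fundamental result of Cobham already invoked in the introduction of the paper, the generating function
\begin{equation*}
f_i(z) := \sum_{n=0}^{\infty} a_{i,n} z^n \in \mathbb{Z}\{z\}
\end{equation*}
is a $b_i$-Mahler function. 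Since the coefficients are bounded by $b_i$, the series has radius of convergence at least $1$, so $f_i$ is well-defined at $\alpha_i := 1/b_i$, and one readily checks that $f_i(\alpha_i) = b_i\bigl(\xi_i - \lfloor \xi_i\rfloor\bigr)$.

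Next, I would apply Theorem \ref{thm: main} with $\mathbb{K}=\mathbb{Q}$, $q_i=b_i$, and the above choices of $f_i$ and $\alpha_i$. Since $b_1,\ldots,b_r$ are multiplicatively independent, they are \emph{a fortiori} pairwise multiplicatively independent, so hypothesis~(ii) of Theorem \ref{thm: main} is satisfied. The remaining conditions, namely $0<|\alpha_i|<1$ and the well-definedness of $f_i$ at $\alpha_i$, are automatic. The theorem then yields that $f_1(\alpha_1),\ldots,f_r(\alpha_r)$ are algebraically independent over $\Q$, unless one of them lies in $\mathbb{Q}$.

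Finally, it remains to transport this conclusion back to the $\xi_i$. The identity $f_i(\alpha_i) = b_i(\xi_i - \lfloor \xi_i\rfloor)$ is a rational-affine relation with both multiplier and shift in $\mathbb{Q}$, so $\xi_i$ and $f_i(\alpha_i)$ generate the same extension of $\mathbb{Q}$; consequently $\xi_1,\ldots,\xi_r$ are algebraically independent over $\mathbb{Q}$ if and only if $f_1(\alpha_1),\ldots,f_r(\alpha_r)$ are, and $\xi_i\in\mathbb{Q}$ if and only if $f_i(\alpha_i)\in\mathbb{Q}$. Hence the conclusion of Theorem \ref{thm: main} transfers verbatim to give Conjecture \ref{conj: strongv}. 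The only genuine obstacle in the entire chain of reasoning is packed inside Theorem \ref{thm: main} itself; once Cobham's correspondence is in hand, the conjecture follows as a remarkably clean corollary, with the multiplicative independence of the bases $b_i$ matching exactly hypothesis~(ii) and the rationality exception transferring without any loss.
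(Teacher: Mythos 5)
Your translation via Cobham's correspondence (digit sequence $\to$ generating function, evaluation at $1/b_i$, identity $f_i(1/b_i)=b_i\{\xi_i\}$, transfer of the rationality exception) is exactly the paper's, but you invoke the wrong case of Theorem \ref{thm: main}. The paper's notion of ``$\xi_i$ automatic in base $b_i$'' is the one from \cite[Chap.\ 13]{AS03}: the base-$b_i$ digit sequence $(a_{i,n})_{n\geq 0}$ is $q_i$-automatic for \emph{some} integer $q_i\geq 2$, which need not equal $b_i$ and need not be related to the other $q_j$'s. Consequently $f_i(z)$ is only known to be a $q_i$-Mahler function, and the $q_i$ may well fail to be pairwise multiplicatively independent (they could all equal $2$, say), so hypothesis (ii) of Theorem \ref{thm: main} is not available. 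What the hypothesis of the conjecture does give is that the \emph{evaluation points} $\alpha_i=1/b_i$ are multiplicatively independent, and this is precisely hypothesis (i) of Theorem \ref{thm: main}; that is the case the paper applies. Your argument is correct only under the narrower reading in which the automaton is required to read its input in base $b_i$ itself (so that $q_i=b_i$); under the paper's definition it proves a strictly weaker statement. The fix is one line: keep everything else and replace the appeal to Case (ii) by an appeal to Case (i), noting that the multiplicative independence of $b_1,\ldots,b_r$ is equivalent to that of $1/b_1,\ldots,1/b_r$. (Your observation that multiplicative independence of the family implies pairwise multiplicative independence is correct, but it is the $\alpha_i$'s, not the $q_i$'s, that carry the independence here.)
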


Conjecture \ref{conj: strongv} is not implied by Furstenberg's conjecture. The former 
does not only imply that the 
Thue-Morse number $\tau$ cannot be automatic in base $10$, but also 
that this is the case for any number obtained from $\tau$ by using algebraic numbers and algebraic 
operations 
(addition, multiplication, division, taking $n$th roots...). 
The case $r=1$ was a long-standing conjecture first proved by Bugeaud and the first author  
\cite{AB07} by means of the subspace theorem. 
See also \cite{AF1,PPH} for a recent different proof based on Mahler's method.   
So far, Conjecture \ref{conj: strongv} has only be settled in that particular case.  

\subsubsection{Connection with Mahler's method and Theorem \ref{thm: main}} 

In 1968, Cobham \cite{Co68} first noticed the following fundamental connection 
between automatic sequences and $M$-functions. 
If the sequence $(a_n)_{n\geq 0}$ is $q$-automatic, then the generating function  
$$
f(z):= \sum_{n=0}^{\infty}a_nz^n 
$$
is a {\it $q$-Mahler function}.  
In turn, problems about transcendence and algebraic independence of 
automatic real numbers can be translated and extended to problems concerning 
transcendence and algebraic independence of $M$-functions at algebraic points. 
In particular, Conjecture \ref{conj: strongv}, 
and hence Conjecture \ref{conj: weakv}, easily follow from Part (i) of Theorem \ref{thm: main}.  

\begin{proof}[Proof of Conjecture \ref{conj: strongv}]
Replacing  $\xi_1,\ldots,\xi_r$ by their fractional part if necessary, we can assume without 
any loss of generality that $0\leq \xi_i<1$, for every $i$, $1\leq i\leq r$.  
By assumption, the number $\xi_i$ is automatic in base $b_i$.  
This means that, for some integer $q_i\geq 2$, there exists a $q_i$-automatic sequence 
$(a_{i,n})_{n\geq 0}$ with values in $\{0,1,\ldots,b_i-1\}$ such that $\xi_i=f_i(1/b_i)$, where  
$f_i(z):= \sum_{n=0}^{\infty}a_{i,n}z^n\in \mathbb Q\{z\}$. A discussed in \cite{Co68}, the fact that 
the sequence 
$(a_{i,n})_{n\geq 0}$  is $q_i$-automatic implies that $f_i(z)$ is a $q_i$-Mahler function.    
Now, let us assume that $\xi_1,\ldots,\xi_r$ are all irrational. By \cite[Corollaire 1.8]{AF1},  
we obtain that these numbers are all transcendental. 
Since by assumption the numbers $1/b_1,\ldots,1/b_r$ are multiplicatively independent, 
Part (i) of Theorem \ref{thm: main} implies that the numbers 
$\xi_1=f_1(1/b_1),\ldots,\xi_r=f_r(1/b_r)$ are algebraically independent, as wanted. 
\end{proof}

As with Furstenberg's conjecture, Theorem \ref{thm: main} 
has also valuable consequences about expansions of natural numbers.   
Let us first recall that a set $\mathcal E\subset \mathbb N$ is $q$-automatic if its elements, 
when written in base $q$, 
can be recognized by a finite automaton (See, for instance, \cite[Chapter 5]{AS03}). 
In this framework, there is a famous theorem by Cobham \cite{Co69} that can be stated as follows.
If $\mathcal E\subset \mathbb N$ is both $p$- and 
$q$-automatic, 
where 
$p$ and $q$ are multiplicatively independent, then $\mathcal E$ is a periodic set, meaning that  $\mathcal E$ 
is the union of a finite set and finitely many arithmetic progressions.  

Cobham's theorem can be rephrased in terms of power series. 
Indeed, it is equivalent to the fact that, given any aperiodic $p$-automatic set $\mathcal E_p$ and any 
aperiodic $q$-automatic set $\mathcal E_q$, 
the corresponding generating functions  cannot be equal. That is,  
$$\sum_{n\in\mathcal E_p} z^n=:f_p(z) \not= f_q(z):=\sum_{n\in\mathcal E_q} z^n\, .$$ 
In 1987, Loxton and van der Poorten \cite{vdP87} conjectured the following generalization:   
a power series in $\Q[[z]]$ cannot be both $p$-Mahler and $q$-Mahler, unless it is rational. 
This conjecture was first proved by Bell and the first author in \cite{AB17}, while 
a different proof was given by Sch\"afke and Singer \cite{SS19}. Very recently, the authors of 
\cite{ADHW} even proved a stronger result also conjectured by Loxton and van der Poorten \cite{vdP87}: 
a $p$-Mahler function $f_p(z)\in\Q[[z]]$ and a $q$-Mahler function $f_q(z)\in\Q[[z]]$ are algebraically independent over $\Q(z)$, unless one of them is rational.   
This result refines Cobham's theorem by expressing, in algebraic terms, the discrepancy between 
aperiodic automatic sets associated with 
multiplicatively independent input bases. The proof given in \cite{ADHW} is based on a 
suitable parametrized Galois theory associated with linear Mahler equations and follows the strategy 
initiated in 
\cite{ADH}.  

Part (ii) of Theorem \ref{thm: main} leads to the following significant generalization of  
all the aforementioned results, providing in particular a 
totally new proof of Cobham's theorem.

\begin{coro}\label{coro: fonctions}
Let $r\geq 1$ be an integer. For every integer $i$, $1\leq  i\leq r$, let $q_i\geq 2$ be an integer and 
$f_i(z)\in \Q\{z\}$ be a $q_i$-Mahler function. Assume that $q_1,\ldots, q_r$ are pairwise multiplicatively independent. Then $f_1(z),\ldots,f_r(z)$ are algebraically independent over $\Q(z)$, unless one of them 
is  rational.   
\end{coro}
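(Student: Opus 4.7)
The plan is to derive Corollary \ref{coro: fonctions} from Part (ii) of Theorem \ref{thm: main} by a specialization argument, using the already-established case $r = 1$ to guarantee transcendence at the chosen specialization point.

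Suppose, for contradiction, that none of $f_1(z), \ldots, f_r(z)$ is rational, yet there exists a nonzero polynomial $P \in \overline{\mathbb{Q}}[z, X_1, \ldots, X_r]$ with $P(z, f_1(z), \ldots, f_r(z)) = 0$. Writing $P = \sum_{I} p_I(z) X^I$, at least one $p_I$ is a nonzero element of $\overline{\mathbb{Q}}[z]$, hence has only finitely many roots. Since each $f_i$ has a positive radius of convergence, I can therefore choose an algebraic number $\alpha$ with $0 < |\alpha| < 1$ such that every $f_i$ is well-defined at $\alpha$ and $P(\alpha, X_1, \ldots, X_r) \in \overline{\mathbb{Q}}[X_1, \ldots, X_r]$ is not the zero polynomial. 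Evaluating the functional relation at $z = \alpha$ then produces a genuine nontrivial algebraic relation over $\overline{\mathbb{Q}}$ among the values $f_1(\alpha), \ldots, f_r(\alpha)$.

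On the other hand, let $\mathbb{K}$ be a number field containing the coefficient fields of each $f_i$. Since each $f_i$ is a $q_i$-Mahler function that is not rational, the case $r = 1$ of Theorem \ref{thm: main} — equivalently, \cite[Corollaire 1.8]{AF1} — forces $f_i(\alpha)$ to be transcendental for every $i$; in particular, $f_i(\alpha) \notin \mathbb{K}$. Because the integers $q_1, \ldots, q_r$ are pairwise multiplicatively independent by hypothesis, all the assumptions of Part (ii) of Theorem \ref{thm: main} are satisfied with this choice of $\mathbb{K}$ and with $\alpha_1 = \cdots = \alpha_r = \alpha$. Applying that theorem, we conclude that $f_1(\alpha), \ldots, f_r(\alpha)$ are algebraically independent over $\overline{\mathbb{Q}}$, contradicting the relation produced in the previous paragraph.

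There is essentially no hard step left once Theorem \ref{thm: main} is available: the entire argument is a routine transfer from functions to values via specialization. The only minor technical point is the selection of the specialization point $\alpha$, which must simultaneously lie in the intersection of the disks of convergence of the $f_i$, be algebraic, and avoid the finite bad locus where $P(\alpha, \cdot)$ collapses to zero; the abundance of such $\alpha$ is immediate. All the genuine difficulty has been absorbed into Theorem \ref{thm: main} itself, in particular into the purity theorem (Theorem \ref{thm: purity}) dealing with sufficiently independent transformations.
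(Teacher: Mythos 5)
Your overall strategy is the same as the paper's: specialize at a well-chosen algebraic point $\alpha$ and invoke Part (ii) of Theorem \ref{thm: main}. But there is a genuine gap at the step where you claim that, because each $f_i$ is not a rational function, ``the case $r=1$ of Theorem \ref{thm: main} forces $f_i(\alpha)$ to be transcendental for every $i$.'' The $r=1$ case (i.e.\ \cite[Corollaire 1.8]{AF1}) only gives a dichotomy on the \emph{value}: $f_i(\alpha)$ is either transcendental or belongs to $\mathbb K$. Irrationality (even transcendence) of the \emph{function} $f_i$ does not exclude the second branch at a particular point: a transcendental Mahler function can perfectly well take a value in $\mathbb K$ at a given algebraic $\alpha$ (for instance $g(z)=(z-1/2)\sum_{n\geq 0} z^{2^n}$ is a transcendental $2$-Mahler function with $g(1/2)=0$). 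If your chosen $\alpha$ happens to be such a point for some $i$, then Part (ii) of Theorem \ref{thm: main} yields no conclusion, and your contradiction evaporates. Your freedom in choosing $\alpha$ only lets you avoid the finitely many zeros of a coefficient $p_I$; nothing in your argument controls the set of algebraic $\alpha$ at which some $f_i(\alpha)$ is algebraic.

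This is precisely the point where the paper does extra work. It first upgrades ``irrational'' to ``transcendental over $\Q(z)$'' via \cite[Theorem 5.1.7]{Ni_Liv}, and then combines Nishioka's theorem with \cite[Lemma 6]{Bec94} to produce a radius $r>0$ such that \emph{every} nonzero algebraic $\alpha$ with $\vert\alpha\vert<r$ gives transcendental values $f_1(\alpha),\ldots,f_r(\alpha)$ simultaneously; only then is Part (ii) of Theorem \ref{thm: main} applied. Your proof becomes correct once you insert such an input (restricting $\alpha$ to a sufficiently small punctured disc about the origin, and enlarging $\mathbb K$ so that $\alpha\in\mathbb K$); the remaining bookkeeping in your argument --- clearing the functional relation, choosing $\alpha$ off the zero locus of a nonzero coefficient, and transferring algebraic dependence from functions to values --- is fine.
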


The case $r=1$ is a classical result (see, for instance, \cite[Theorem 5.1.7]{Ni_Liv}), 
while, as previously mentioned, the case $r=2$ is much harder and was only recently proved in \cite{ADHW}. 

\begin{proof}
Let us assume that the functions $f_1(z),\ldots,f_r(z)$ are all irrational. 
Then, by  \cite[Theorem 5.1.7]{Ni_Liv}, they are all transcendental over $\Q(z)$.  
Combining Nishioka's theorem and \cite[Lemma 6]{Bec94}, we deduce that there exists $r>0$ such that 
for all algebraic numbers $\alpha$, with $0<\vert \alpha\vert<r$, the numbers $f_1(\alpha),\ldots,f_r(\alpha)$ are  all transcendental.  
Picking such $\alpha$ and applying Part (ii) of Theorem \ref{thm: main}, we obtain that the numbers 
$f_1(\alpha),\ldots,f_r(\alpha)$ are algebraically independent over $\Q$. 
Hence the functions $f_1(z),\ldots,f_r(z)$ 
are algebraically independent over $\Q(z)$, as wanted. 
\end{proof}


\end{document}